\algrenewcommand\algorithmicrequire{\textbf{Input:}}
\algrenewcommand\algorithmicensure{\textbf{Output:}}
\def\BState{\State\hskip-\ALG@thistlm}
\newtheorem{thm}{Theorem}[section]
\newtheorem{lem}{Lemma}[section]
\newtheorem{prop}{Proposition}[section]
\theoremstyle{definition}
\newtheorem{defn}{Definition}[section]
\newtheorem{rem}{Remark}[section]
\theoremstyle{remark}
\numberwithin{equation}{section}
\numberwithin{equation}{section}
\newcounter{saveeqn}
\title[Sub-wavelength resonances of
elastic waves in 2D]{Sub-wavelength resonances in two-dimensional multi-layer elastic media}
\author{Yan Jiang}
\address{Department of Mathematics, City University of Hong Kong, Hong Kong SAR, China.}
\email{yjian24@cityu.edu.hk, jiangyan20@mails.jlu.edu.cn}
\author{Hongyu Liu}
\address{Department of Mathematics, City University of Hong Kong, Hong Kong SAR, China.}
\email{hongyu.liuip@gmail.com, hongyliu@cityu.edu.hk}
\author{Fanbo Sun}
\address{School of Mathematics and Statistics, Central South University, Changsha, Hunan, P. R. China.}
\email{fanbo\_sun2023@163.com}
\author{Yajuan Wang}
\address{Department of Mathematics, City University of Hong Kong, Hong Kong SAR, China.}
\email{yajuwang@cityu.edu.hk, wangyjsnu@163.com}
\date{} 
\begin{document}
\maketitle

\begin{abstract}
In this paper, we focus on the sub-wavelength resonances in two-dimensional elastic media characterized by high contrasts in both Lam\'e parameters and density. Our contributions are fourfold. First, it is proved that the operator $\hat{\mathbf{S}}_{\partial D}^{\omega}$, which serves as a leading order approximation to $\mathbf{S}_{\partial D}^{\omega}$ as $\omega\rightarrow0$, is invertible in the space $\mathcal{L}(L^{2}\left(\partial D)^{2},H^{1}(\partial D)^{2}\right)$. Second, based on layer potential techniques in combination with asymptotic analysis, we derive an original formula for the leading-order terms of sub-wavelength resonance frequencies, which are controlled by the determinant of the $3N \times 3N$ matrices. Specifically, there are $3N$ resonance frequencies within an $N$-nested layer structure. In addition, the scattering field exhibits an enhancement coefficient on the order of $\mathcal{O}(\omega^{-2})$ as the incident frequency $\omega$ approaches the resonance frequency. Third, by applying spectral properties to solve the corresponding eigenvalue problem, we compute the quantitative expressions for sub-wavelength resonance frequencies within a disk. Finally, some numerical experiments are provided to illustrate theoretical results and demonstrate the existence of the sub-wavelength resonance modes.

\medskip

\noindent{\bf Keywords:} sub-wavelength resonances, high contrast metamaterials, resonators, multi-layer structures, layer potentials, spectrum

\noindent{\bf 2020 Mathematics Subject Classification:~35R30; 35B30; 35B34}
\end{abstract}

\section{Introduction}
\subsection{Mathematical setup}
Focusing first mainly on the mathematics, but not the physical applications, we begin by describing the mathematical formulation for our study. Consider the total displacement field $\mathbf{u}$ controlled by the following system
\begin{equation}\label{Lame system2}
\begin{cases}
\mathcal{L}_{\tilde{\lambda}, \tilde{\mu}} \mathbf{u}(\mathbf{x})+\omega^{2} \tilde{\rho} \mathbf{u}(\mathbf{x})=0, & \mathbf{x} \in {D}, \\ \mathcal{L}_{\lambda, \mu} \mathbf{u}(\mathbf{x})+\omega^{2} \rho \mathbf{u}(\mathbf{x})=0, & \mathbf{x} \in \mathbb{R}^{2} \backslash \bar{D}, \\ \left.\mathbf{u}(\mathbf{x})\right|_{-}=\left.\mathbf{u}(\mathbf{x})\right|_{+}, & \mathbf{x} \in \Gamma_j^{\pm},1\leq j\leq N, \\ \left.\partial_{\boldsymbol{\nu}} \mathbf{u}(\mathbf{x})\right|_{+}=\left. \partial_{\tilde{\boldsymbol{\nu}}} \mathbf{u}(\mathbf{x})\right|_{-},& \mathbf{x} \in \Gamma_j^{+},1\leq j\leq N, \\
\left.\partial_{\tilde{\boldsymbol{\nu}}} \mathbf{u}(\mathbf{x})\right|_{+}=\left.\partial_{\boldsymbol{\nu}} \mathbf{u}(\mathbf{x})\right|_{-}, & \mathbf{x} \in \Gamma_j^{-},1\leq j\leq N, \\
\mathbf{u}^{s}:=\mathbf{u}-\mathbf{u}^{i} \quad \text { satisfies the radiation condition. } &
\end{cases}
\end{equation}
Here, the entire nested-resonator can be represented as ${D}=\cup_{j=1}^N{D}_j$, where ${D}_j$ is the bounded doubly-connected domain in $\mathbb{R}^{2}$, lying between the interior boundary $\Gamma_j^-$ and the exterior boundary $\Gamma_j^+(1\leq j\leq N)$. Each $\Gamma_j^-$ surrounds $\Gamma_{j+1}^+$ and $\Gamma_j^+$ surrounds $\Gamma_j^-(1\leq j\leq N-1)$. Consequently, the host matrix material can be written by $\mathbb{R}^2\backslash \bar{D}=\cup_{j=0}^N\hat{D}_j,$ where $\hat{D}_j$ denotes the region of the gap between $\Gamma_j^{-}$ and $\Gamma_{j+1}^{+}(1\leq j\leq N-1)$. Let $\hat{D}_0$ and $\hat{D}_N$ be the unbounded domain with boundary $\Gamma_{1}^{+}$ and the bounded domain with boundary $\Gamma_N^-$, respectively.~(see Figure \ref{fig:multi} for a schematic illustration).

Moreover, the corresponding elastic parameters in the resonator and background medium satisfy the following relationship,
\begin{equation}\label{delta}
\tilde{\lambda}=\frac{1}{\delta} \lambda, \quad \tilde{\mu}=\frac{1}{\delta} \mu, \quad \tilde{\rho}=\frac{1}{\epsilon} \rho,
\end{equation}
where $\delta \ll 1$ and $\epsilon \ll 1$. Besides, we assume that the contrast $\tau$ satisfies
\begin{equation}\label{tau2}
\tau:=\sqrt{\delta / \epsilon}=\mathcal{O}(1).
\end{equation}
This high-contrast assumption is the cause of the underlying system's sub-wavelength resonance response and will be at the center of our subsequent analysis.
\begin{figure}[htbp]
  \centering
  \begin{minipage}[b]{0.45\textwidth}
    \includegraphics[width=0.8\textwidth]{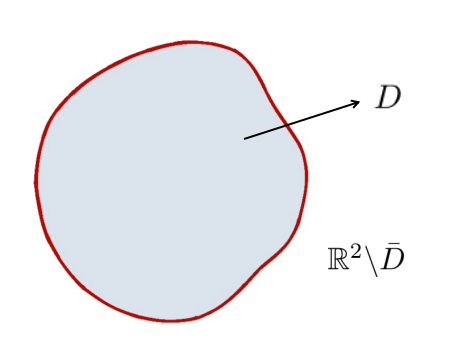}
   \caption{\label{fig:single}Single resonator.}
  \end{minipage}
\centering
  \begin{minipage}[b]{0.45\textwidth}
    \includegraphics[width=0.8\textwidth]{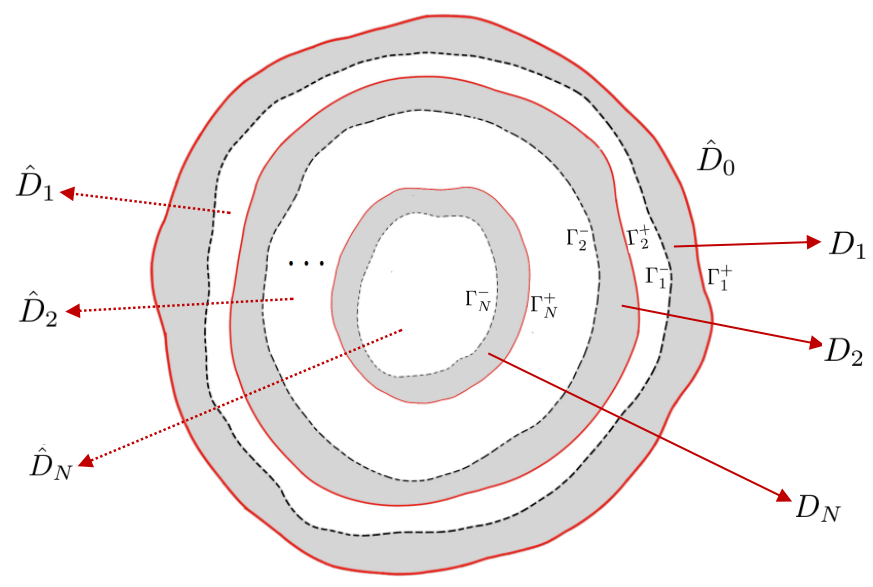}
   \caption{\label{fig:multi}$N$-nested resonators.}
  \end{minipage}
\end{figure}

In this paper, we are interested in studying the resonance behavior of the $N$-nested layer scatterers. Unlike the three-dimensional elastic case~(see \cite{Minnaertresonances,HLi2024}), the static single layer potential operator, denoted by $\mathbf{S}_{\partial D}$ in $\mathbb{R}^{2}$, may not be invertible, even for a disk. Moreover, both the operator and its inverse are not analytic with respect to the low-frequency. This introduces distinct mathematical challenges. In the following, we first provide the definition of the sub-wavelength resonance frequencies and resonance modes.
\begin{defn}
For $\delta > 0$, a sub-wavelength resonance frequency (eigenfrequency) $\omega=\omega(\delta)\in \mathbb{C}$ is defined to be such that
\begin{itemize}
\item[(i)] there exists a non-trivial solution to \eqref{Lame system2} with $\mathbf{u}^{i} = 0$, known as an associated resonance mode (eigenmode),
\item[(ii)] $\omega$ depends continuously on $\delta$ and satisfies $\omega \to 0$ as $\delta \to 0$.
\end{itemize}
\end{defn}
Our main result can be roughly summarised into the following theorem.
\begin{thm}\label{multifrequencies}
For $\delta\ll 1$, and $\epsilon \ll 1$, the system described by \eqref{Lame system2} has $3N$
sub-wavelength resonance frequencies~(counted with their multiplicities), denoted by $\omega_k~(1\leq k\leq3N)$. The leading-order term of these frequencies satisfies
\begin{align}\label{multiformula}
\det\Big(\rho\omega^2\ln\omega\hat{\mathbf{P}}+\rho\omega^2\left(\ln{(\sqrt{\rho}\tau)}\hat{\mathbf{P}}
+\hat{\mathbf{M}}\right)-\epsilon\hat{\mathbf{Q}}\Big)=0.
\end{align}
where the matrices $\hat{\mathbf{P}}$, $\hat{\mathbf{M}}$ and $\hat{\mathbf{Q}}$ are defined in \eqref{matriceshat1}. In fact, $\hat{P}_{ij}$, $\hat{M}_{ij}$ and $\hat{Q}_{ij}$ given in \eqref{matriceshat2} correspond to a diagonal block matrix and a tridiagonal block matrix, respectively.
\end{thm}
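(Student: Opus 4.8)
The plan is to follow the standard layer-potential approach to sub-wavelength resonances, but adapted to the two-dimensional elastic setting where, as noted above, $\mathbf{S}_{\partial D}$ is generally not invertible and the low-frequency expansion contains logarithmic terms. First I would represent the resonance mode $\mathbf{u}$ using single-layer potentials on each interface $\Gamma_j^\pm$: inside $D_j$ one uses $\mathbf{S}_{\partial D_j}^{\omega\sqrt{\tilde\rho}}$ with the scaled parameters $\tilde\lambda,\tilde\mu$, and in each gap region $\hat D_j$ one uses a combination of $\mathbf{S}_{\Gamma_j^-}^{\omega\sqrt\rho}$ and $\mathbf{S}_{\Gamma_{j+1}^+}^{\omega\sqrt\rho}$, with the outgoing fundamental solution guaranteeing the radiation condition in $\hat D_0$. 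The transmission conditions in \eqref{Lame system2} then translate into a $3N$-block (there are $2N$ interfaces, but the density/traction jump conditions couple them) system of boundary integral equations $\mathbf{A}(\omega,\delta)[\boldsymbol\varphi]=0$ for the density vector $\boldsymbol\varphi$; the resonance condition is that $\mathbf{A}(\omega,\delta)$ has a nontrivial kernel.

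Next I would carry out the low-frequency asymptotic expansion of each operator block. Using the known expansions $\mathbf{S}_{\partial D}^{\omega}=\mathbf{S}_{\partial D}+\omega^2\ln\omega\,\mathbf{S}_{\partial D}^{(1)}+\cdots$ (the $\ln\omega$ being the genuinely 2D feature) together with the expansions of the conormal derivative operators $\partial_\nu\mathbf{S}_{\partial D}^\omega$ and the jump relations, I would expand $\mathbf{A}(\omega,\delta)$ in $\omega$ and in $\delta$ (recalling $\epsilon=\delta/\tau^2$ with $\tau=\mathcal O(1)$). The key structural point is that the leading-order operator is built from $\hat{\mathbf{S}}_{\partial D}^{\omega}$, which by the first bulleted contribution in the abstract is invertible in $\mathcal L(L^2(\partial D)^2,H^1(\partial D)^2)$; this invertibility is what lets me reduce the infinite-dimensional kernel condition to a finite-dimensional one. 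Concretely, I expect that after multiplying through by the inverse of the invertible leading block, the solvability of the reduced system collapses onto the finite-dimensional space spanned by the kernel of $\mathbf{S}_{\partial D}-\text{type}$ operators on each layer — heuristically a three-dimensional space per layer, coming from the (two-dimensional) span of rigid translations plus one more mode associated with the logarithmic behaviour — which is exactly why one gets a $3N\times 3N$ matrix and $3N$ resonance frequencies.

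The characteristic equation is then obtained by projecting $\mathbf{A}(\omega,\delta)$ onto this $3N$-dimensional space and tracking the first nontrivial order in $\omega$ and $\epsilon$. I would organize the projected matrix as $\rho\omega^2\ln\omega\,\hat{\mathbf{P}} + \rho\omega^2\big(\ln(\sqrt\rho\,\tau)\hat{\mathbf{P}}+\hat{\mathbf{M}}\big) - \epsilon\hat{\mathbf{Q}}$, identifying $\hat{\mathbf{P}}$ with the pairing of the leading logarithmic part against the kernel (a diagonal block matrix since at leading order the layers decouple through the dominant $\mathbf{S}_{\partial D}$ term), $\hat{\mathbf{M}}$ with the $\omega^2$-order correction, and $\hat{\mathbf{Q}}$ with the $\epsilon$-order contribution coming from the density contrast, which is tridiagonal in block form because the traction conditions couple only adjacent layers. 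Setting the determinant to zero gives \eqref{multiformula}. Finally, to establish the $\mathcal O(\omega^{-2})$ enhancement of the scattered field, I would feed a genuine incident wave $\mathbf{u}^i$ back into the (now inhomogeneous) integral system, invert $\mathbf{A}(\omega,\delta)$ using the expansion, and read off that the component along the near-kernel direction blows up like the reciprocal of the characteristic determinant, which vanishes to order $\omega^2$ — hence the $\mathcal O(\omega^{-2})$ factor.

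The main obstacle I anticipate is twofold: first, correctly bookkeeping the interplay between the $\omega\to0$ and $\delta\to0$ limits in the presence of the $\ln\omega$ singularity — one must be careful that the scaling $\tau=\sqrt{\delta/\epsilon}=\mathcal O(1)$ keeps $\omega^2\ln\omega$ and $\epsilon$ at comparable orders, which is precisely what makes \eqref{multiformula} a balanced equation; and second, justifying rigorously that the finite-dimensional reduction captures all $3N$ resonances and no spurious ones — i.e., proving that the complementary (infinite-dimensional) part of $\mathbf{A}(\omega,\delta)$ stays boundedly invertible uniformly as $\omega,\delta\to0$, which is where the invertibility of $\hat{\mathbf{S}}_{\partial D}^{\omega}$ in $\mathcal L(L^2,H^1)$ does the essential work and must be invoked with care on the nested multi-layer geometry.
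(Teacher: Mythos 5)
Your proposal captures the correct high-level architecture (layer potentials on the nested interfaces, low-frequency expansion containing $\omega^2\ln\omega$ terms, finite-dimensional reduction to a $3N\times3N$ determinant via the kernel of the leading-order operator), but there are two concrete mistakes that would derail the argument if carried out as you describe. First, the boundary integral system is a $4N$-by-$4N$ block operator, not the ``$3N$-block'' system you assert: two single-layer densities are needed per interface (one from each side) over the $2N$ interfaces $\Gamma_j^\pm$, giving $4N$ density functions and the tridiagonal block operator $\mathcal{A}(\omega,\delta)$ of \eqref{block diagonal2}. The number $3N$ arises only afterwards as $\dim\ker\mathcal{A}_0$, not as a count of blocks.

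Second, and more seriously, you attribute the three kernel modes per layer to ``the two-dimensional span of rigid translations plus one more mode associated with the logarithmic behaviour.'' This is wrong: the three-dimensional space $\mathbf{\Psi}$ per layer consists precisely of the $2$D infinitesimal rigid motions, namely the two translations $\boldsymbol{\xi}_1,\boldsymbol{\xi}_2$ and the rotation $\boldsymbol{\xi}_3=(x_2,-x_1)^T$ from \eqref{xi}; it is characterized as $\ker(-\mathbf{I}_2/2+\mathbf{K}_{\partial D})$ by Lemma \ref{Xi}. The logarithmic singularity enters only through the $\gamma_\omega$ term in $\hat{\mathbf{S}}_{\partial D}^{\omega}$; it governs invertibility of the single-layer operator but does not contribute an extra kernel mode. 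Starting from your description one would compute the wrong kernel basis and hence the wrong projected matrices. Relatedly, you are silent on the decisive step: characterizing $\ker\mathcal{A}_0^*$. In the multi-layer case this requires solving auxiliary transmission systems for the functions $\varsigma^j$ supported on adjacent interfaces (cf.\ Lemma \ref{ker space}), and it is the pairing of $\mathcal{A}_{0,1}$ against these $\varsigma^j$ --- not the traction conditions directly --- that produces the tridiagonal block structure of $\hat{\mathbf{Q}}$ in \eqref{matriceshat1}. Without this, the projected equation cannot be brought into the form \eqref{multiformula}.
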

\begin{thm}\label{pointscattererapproximation}
For $\delta\ll 1$, and $\epsilon\ll 1$, the displacement filed~$\mathbf{u}_{{D}_j}$~within the resonator~${D}_j$~admits the following expression for $\omega\ll1$,
\vspace{-7pt}
$$
\mathbf{u}_{{D}_j}=\sum_{i=1}^3\varrho_i \boldsymbol{\xi}_i+\mathcal{O}(1),\ 1\leq j\leq N,
\vspace{-2pt}
$$
when $\omega$ is located in the following different regimes, the coefficients $\varrho_i$, satisfy
\vspace{1pt}
\begin{itemize}
\item[(i)] for $\omega$ is located far from the resonance frequencies $\omega_k$, the coefficients $\varrho_i=\mathcal{O}(1)$;
\vspace{2pt}
\item[(ii)] for $\omega$ coincides with the roots of \eqref{multiformula}, the coefficients $\varrho_i=\mathcal{O}(\omega^{-2})$.
\end{itemize}
Moreover, for $\omega^2\gg \delta$, there holds
\vspace{-7pt}
$$
{\mathbf{u}_{{D}_j}=\sum_{i=1}^3\varrho_i \boldsymbol{\xi}_i(1+{o}(1))}, \ 1\leq j\leq N,
\vspace{-2pt}
$$
with the coefficients $\varrho_i=o(1)$.
\end{thm}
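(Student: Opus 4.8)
The plan is to combine a layer-potential representation with low-frequency asymptotics and to read the behaviour of $\mathbf{u}_{D_j}$ off a $3N\times 3N$ linear system whose coefficient matrix is, up to invertible reductions, the one appearing in \eqref{multiformula}.

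\emph{Step 1 (representation and the interior rigid-motion component).} First I would write $\mathbf{u}$ in each gap $\hat{D}_j$ and each layer $D_j$ as a superposition of single-layer potentials on the interfaces $\Gamma_\ell^{\pm}$, using the background wavenumbers in the $\hat{D}_j$ and the scaled resonator wavenumbers in the $D_j$, with unknown densities $\boldsymbol{\psi}_\ell^{\pm}\in L^2(\Gamma_\ell^{\pm})^2$. Imposing the two transmission conditions of \eqref{Lame system2} on every interface turns the problem into a closed boundary-integral system for the $\boldsymbol{\psi}_\ell^{\pm}$. Since $\mathcal{L}_{\tilde{\lambda},\tilde{\mu}}=\delta^{-1}\mathcal{L}_{\lambda,\mu}$ and $\tilde{\rho}=\epsilon^{-1}\rho$, inside $D_j$ the field solves $\mathcal{L}_{\lambda,\mu}\mathbf{u}+\tau^{2}\omega^{2}\rho\,\mathbf{u}=0$, while the traction condition forces the interior conormal data to be $\mathcal{O}(\delta)$; hence, to leading order, $\mathbf{u}_{D_j}$ lies in the kernel of $\mathcal{L}_{\lambda,\mu}$ on the doubly-connected domain $D_j$, which is the three-dimensional space of rigid displacements spanned by the two translations and the infinitesimal rotation $\boldsymbol{\xi}_1,\boldsymbol{\xi}_2,\boldsymbol{\xi}_3$. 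Solving the elastostatic correction (whose data is of order lower than the rigid-motion amplitude, producing only an $\mathcal{O}(1)$ contribution) then yields $\mathbf{u}_{D_j}=\sum_{i=1}^{3}\varrho_i\boldsymbol{\xi}_i+\mathcal{O}(1)$ and identifies $\varrho_i$ as the coordinates of the interior rigid-motion part.

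\emph{Step 2 (the finite system; regimes (i) and (ii)).} Next I would expand the boundary-integral system as $\omega,\delta,\epsilon\to 0$, using the logarithmic expansion $\mathbf{S}_{\partial D}^{\omega}=\hat{\mathbf{S}}_{\partial D}^{\omega}+\mathcal{O}(\omega^{2})$ together with the invertibility of $\hat{\mathbf{S}}_{\partial D}^{\omega}$ in $\mathcal{L}(L^{2}(\partial D)^{2},H^{1}(\partial D)^{2})$ to eliminate the non-resonant part of the densities and reduce everything to a $3N\times 3N$ system $\mathcal{A}(\omega,\delta,\epsilon)\,\vec{\varrho}=\vec{g}$, where $\mathcal{A}$ agrees, up to invertible left and right multiplications, with $\rho\omega^{2}\ln\omega\,\hat{\mathbf{P}}+\rho\omega^{2}(\ln(\sqrt{\rho}\tau)\hat{\mathbf{P}}+\hat{\mathbf{M}})-\epsilon\hat{\mathbf{Q}}$, and where one finds $\vec{g}=\mathcal{O}(\epsilon)$, reflecting that the incident field couples into the resonator only through the high-contrast mechanism. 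For (i), when $\omega$ stays away from the zero set of $\det\mathcal{A}$, Cramer's rule gives $\varrho_i=\det\mathcal{A}_i/\det\mathcal{A}=\mathcal{O}(1)$, since numerator and denominator are of the same order in $(\omega,\epsilon)$ once $\vec{g}=\mathcal{O}(\epsilon)\sim\rho\omega^{2}|\ln\omega|$ in the resonance range. For (ii), when $\omega$ equals a simple root $\omega_k$ of \eqref{multiformula}, $\det\mathcal{A}$ drops by one power of the common small scale relative to its cofactors while $\vec{g}$ keeps its size; because at a sub-wavelength resonance one has $\epsilon\sim\rho\omega^{2}|\ln\omega|$, so that $\epsilon$ and $\omega^{2}$ are comparable, this imbalance is exactly one factor $\omega^{-2}$, whence $\varrho_i=\mathcal{O}(\omega^{-2})$.

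\emph{Step 3 (the regime $\omega^{2}\gg\delta$, and the main obstacle).} Every root of \eqref{multiformula} satisfies $\omega_k^{2}=\mathcal{O}(\epsilon/|\ln\epsilon|)=\mathcal{O}(\delta/|\ln\delta|)\ll\delta$ (from the balance $\rho\omega^{2}\ln\omega\,\hat{\mathbf{P}}\sim\epsilon\hat{\mathbf{Q}}$ and $\tau=\mathcal{O}(1)$), so $\omega^{2}\gg\delta$ lies strictly above all resonances. There $\epsilon\hat{\mathbf{Q}}$ is negligible, $\mathcal{A}=\rho\omega^{2}\ln\omega\,\hat{\mathbf{P}}\,(1+o(1))$ is boundedly invertible after dividing by $\omega^{2}\ln\omega$, and since $\vec{g}=\mathcal{O}(\epsilon)=\mathcal{O}(\delta)\ll\omega^{2}|\ln\omega|$, we get $\vec{\varrho}=o(1)$. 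Re-running the interior expansion of Step 1 with the now-dominant frequency term $\tau^{2}\omega^{2}\rho\,\mathbf{u}$ then shows the remainder is relatively $o(1)$, i.e.\ $\mathbf{u}_{D_j}=\sum_{i=1}^{3}\varrho_i\boldsymbol{\xi}_i(1+o(1))$. I expect the principal difficulty to be exactly the bookkeeping of the logarithms intrinsic to the two-dimensional Lamé fundamental solution---tracking, through the Schur reduction, which quantities carry a factor $\ln\omega$ and which do not, so that the orders asserted in (i), (ii) and in the last regime come out cleanly and uniformly in the stated ranges of $\omega,\delta,\epsilon$---together with the companion task of showing that the truncated boundary-integral system does invert with the advertised bounds, which is where the non-analyticity of $\mathbf{S}_{\partial D}^{\omega}$ in $\omega$ noted in the introduction must be handled with care.
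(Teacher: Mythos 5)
Your overall plan---represent $\mathbf{u}$ by layer potentials, collapse the boundary system to a $3N\times 3N$ system whose matrix matches \eqref{multiformula}, then read off the sizes of the $\varrho_i$ in the three regimes---is the right skeleton and agrees with the paper's strategy. However, two of the specific quantitative claims you use to close the argument are wrong in a way that touches exactly the logarithmic bookkeeping you yourself flag as the central difficulty. First, the source term of the reduced system is not $\mathcal{O}(\epsilon)$. In the paper the right-hand side after projection onto the kernel co-basis is $\mathbf{\Upsilon}=\bigl((\mathbf{d},0)^T,\hat\Psi_j\bigr)=\mathcal{O}(1)$: the incident field enters through the Dirichlet continuity $\mathbf{u}|_- = \mathbf{u}|_+ = \mathbf{d}+\mathcal{O}(\omega)$, not ``only through the high-contrast mechanism.'' Writing the system for $\vec\varrho$ instead of for the kernel coefficients $\alpha_i$ introduces the factor $\delta\gamma_{\tau\omega}=\mathcal{O}(\epsilon\ln\omega)$ (since $\varrho_i\approx\delta\alpha_i\gamma_{\tau\omega}$), so your $\vec g$ is actually $\mathcal{O}(\epsilon\ln\omega)$, not $\mathcal{O}(\epsilon)$. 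Second, the resonance balance is $\epsilon\sim\rho\omega^2|\ln\omega|$, and $\epsilon$ and $\omega^2$ are therefore \emph{not} comparable: they differ by the divergent factor $|\ln\omega|$. The clean cancellation giving $\varrho_i=\mathcal{O}(\omega^{-2})$ (rather than $\mathcal{O}(\omega^{-2}/|\ln\omega|)$ or $\mathcal{O}(\omega^{-2}|\ln\omega|)$) happens precisely because the extra $|\ln\omega|$ in $\varrho_i=\delta\alpha_i\gamma_{\tau\omega}$ cancels the $|\ln\omega|$ in the resonance balance; your argument drops this and only recovers the answer by coincidence.

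There is also a structural gap in Step 3. You write that $\mathcal{A}=\rho\omega^2\ln\omega\,\hat{\mathbf{P}}\,(1+o(1))$ is boundedly invertible after dividing by $\omega^2\ln\omega$; but $\hat{\mathbf{P}}$ need not be invertible---indeed in the disk case (Lemma \ref{thmdiskomega}) the rotational/shear eigenvalue $p_3=0$, so $\hat{\mathbf{P}}$ is singular and the $\omega^2(\ln(\sqrt\rho\tau)\hat{\mathbf{P}}+\hat{\mathbf{M}})$ term must be retained to invert on the shear subspace. Relatedly, your claim that every root of \eqref{multiformula} satisfies $\omega_k^2=\mathcal{O}(\epsilon/|\ln\epsilon|)\ll\delta$ also fails on that subspace, where the balance is $\omega^2 m_3\sim\epsilon q_3$ and hence $\omega_k^2\sim\epsilon\sim\delta$, not $\ll\delta$. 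Finally, the Cramer's-rule argument you give for regime (i) invokes the resonance-range balance $\epsilon\sim\omega^2|\ln\omega|$, which is inconsistent with being far from resonance; the paper's (much simpler) argument there is just that $\mathcal{A}(\omega,\delta)$ admits a uniformly bounded inverse, so the densities are $\mathcal{O}(1)$ and $\mathbf{u}_{D}=\delta\hat{\mathbf{S}}_{\partial D}^{\tau\omega}[\Phi]+\cdots=\mathcal{O}(\delta\ln\omega)=\mathcal{O}(1)$ under the standing assumption $\delta\ln\omega\ll1$.
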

We will establish the details for Theorem \ref{multifrequencies} and Theorem \ref{pointscattererapproximation} to hold in Section 3.
\subsection{Background motivation and technical developments}The recent development shows that metamaterials with negative or high contrast parameters offer new possibilities for imaging and for the control of waves at deep sub-wavelength scales. Sub-wavelength resonators, such as air bubbles exhibiting Minnaert resonance \cite{1933Minnaert,Minnaertresonances}, have become fundamental in the development of phononic crystals, where periodic arrangements of these high-contrast resonators give rise to distinctive wave propagation phenomena \cite{Pendry,bandgapopening,phononiccrystals}, while also providing insights into the mechanisms of wave localization in disordered, finite systems \cite{skineffect1,defect1,Robustedge,DisorderedSystems}.

Mathematically, most existing research on metamaterials, which are designed by inducing specific resonances, has primarily concentrated on single- and double-layer structures. These include the surface plasmon resonance of metal nanoparticles, typically analyzed using the Drude model, and the core-shell geometry utilized in invisibility cloaks, which is achieved through anomalous local resonance \cite{2018Kangelasticity,Kangelasticity,SpectralTheory}. However, these structures are often limited by poor filtering performance and narrow bandgap width. In contrast, multi-layer scatterers exhibit multiple vibration modes and greater degrees of freedom, positioning them as promising candidates for overcoming these limitations. Indeed, Ammari et al. \cite{GPT1,GPT2,GPT3,SC1,SC2} proposed vanishing structures with generalized polarization tensors~(GPTs)~and a scattering coefficient~(SC), designing the structure using multi-coated concentric disks or balls and proving that near cloaking effects could be enhanced. To support multiband functionality and facilitate cloaking of larger objects, a multi-layer plasmon hybridization model was introduced, offering increased flexibility to suppress higher-order scattering modes \cite{Multifrequency,Invisibility}. Initial analyses focused on three- and four-layer structures \cite{Dandf1,Kulkarni,Plasmonhybridization}. Experimental and numerical studies \cite{multi5,multi7,multi8,multi9} have shown that multi-layer concentric radial resonators can generate multiple local resonance bandgaps in the sub-wavelength regime. Specifically, plasmon resonances in multi-layered concentric spheres and confocal ellipses were investigated in \cite{Dandf2,Kmodesplitting,Ksubwavelength,Wsubwavelength,Kbandgap}, revealing how plasmon modes interact and hybridize, generating localized surface plasmons at each interface. These studies demonstrate how mode splitting is influenced by geometry truncation and the breaking of rotational symmetry, thereby enabling cloaks with broader bandwidths and multiple operational frequencies. Despite substantial progress, mathematical modeling of sub-wavelength resonances and mode splitting mechanisms remains limited.

Inspired by the fact that plasmon modes in multi-layer systems interact and hybridize across interfaces, this paper
is to establish a sub-wavelength resonance theory for two-dimensional elastic media with high contrast. The coupling of longitudinal and transverse waves, along with the vectorial nature of the displacement field, renders the study of elastic sub-wavelength resonators notably more complex. The three-dimensional static single layer potential operator is always invertible and analytic with respect to the low-frequency regime, making it an important starting point for investigation. By using layer potential techniques and the variational method, papers \cite{HLi119141,variationalmethod,HLi2024,HLiLxu2024} provide mathematical proofs of monopolar, dipole and quadrupole resonances using soft elastic materials~(HISE structures)~and dimers (THES structures). For phononic crystals composed of periodically arranged resonators, the authors in \cite{ElasticBandgaps} demonstrate the existence of sub-wavelength band gaps. By contrast, in two-dimensional, similar to the derivations of Minnaert resonances given in Appendix B of \cite{Minnaertresonances}, elastic waves have the following four main challenges.

(1)~the single layer potential $\mathbf{S}_{\partial D}$ may not be invertible (even for a disk) in two dimensions, while this property always holds in three dimensions;

(2)~there is a logarithmic singularity in the asymptotic expansion of the single layer potential $\mathbf{S}_{\partial D}^{\omega}$ for small $\omega$, which means that it contains a double series comprising terms of the form $\omega^{2j}$ and $\omega^{2j}\ln\omega$ for $j\in\mathbb{N}$;

(3)~even though ${\mathbf{S}}_{\partial D}$ may not be invertible, it is necessary to prove the invertibility of leading term $\hat{\mathbf{S}}_{\partial D}^{\omega}$ in $\mathcal{L}(L^{2}\left(\partial D)^{2},H^{1}(\partial D)^{2}\right)$ for sufficient small $\omega$;

(4)~the spectral properties of the associated layer potential operators in two dimensions are more complex for radial geometry, involving not only the intrinsic coupling of all shear and compression waves, but also the corresponding Bessel and Hankel functions containing singular terms $\ln{k_s}$ or $\ln{k_p}$.

To establish the primary conclusion of this paper, we first construct the low-frequency asymptotic expansions for layer potential operator. It is worth mentioning that the corresponding space for the static Lam\'e system with Neumann boundary conditions is three-dimensional, which affect the invertibility of $\hat{\mathbf{S}}_{\partial D}^{\omega}$. In fact, we have obtained the proof using the relationship matrix ${\mathbf{C}}$ between the kernel spaces of operators $-\mathbf{I}_2 / 2+\mathbf{K}_{\partial D}$ and $-\mathbf{I}_2 / 2+\mathbf{K}_{\partial D}^{*}$. Next, utilizing layer potential techniques and Gohberg-Sigal theory, we analyzed the resonance modes of a single resonator for simplicity. Furthermore, the strict extension of resonance theory to multi-layer elastic structures has advanced the existing theoretical framework. The results show that the leading-order terms of the resonance frequencies are controlled by the determinant of the $3N \times 3N$ matrices, with the number of resonance frequencies increasing as the number of resonators grows. Specifically, there are $3N$ resonance frequencies within an $N$-nested layer structure. However, in two dimensions, only an implicit expression for the leading term can be derived. Therefore, we focus on the resonators with radial geometry, where explicit expressions for the sub-wavelength resonance frequencies within a disk are obtained by solving the corresponding eigenvalue problem. Additionally, as the incident frequency $\omega$ approaches the resonance frequency, the enhancement coefficient inside the resonator is on the order of  $\mathcal{O}(\omega^{-2})$. Finally, our results are validated through several numerical examples.

The remainder of this paper is organized as follows. Section 2 reviews main results on layer potentials, and proves the invertibility of the operator $\hat{\mathbf{S}}_{\partial D}^{\omega}$ in the space $\mathcal{L}(L^{2}\left(\partial D)^{2},H^{1}(\partial D)^{2}\right)$. In Section 3, we establish the primary conclusions, including the control equation for the sub-wavelength resonance frequency of multi-layer structures and the point scatterer approximation. Section 4 applies two methods to obtain a specific expression for the resonance frequency in radial geometry. Section 5 presents some numerical experiments to validate the theoretical results.

\section{Preliminaries}
\subsection{Layer potential theory}
We first briefly review the potential theory of the Lam\'e system. The fundamental solution $\boldsymbol{\Gamma}^{\omega}$ of the operator $\mathcal{L}_{\lambda, \mu}+\omega^{2}\rho$ in two dimensions is given by \cite{HAmmariElasticityImaging}
\begin{equation}\label{fundamental solution}
\boldsymbol{\Gamma}^{\omega}(\mathbf{x})=-\frac{\mathrm{i}}{4\mu}H_0^{(1)}\left(k_s|\mathbf{x}|\right)\mathbf{I}_2+\frac{\mathrm{i}}{4\omega^2\rho}\nabla\nabla\left(H_0^{(1)}\left(k_p|\mathbf{x}|\right)-H_0^{(1)}\left(k_s|\mathbf{x}|\right)\right),
\end{equation}
where $\nabla\nabla$ is the standard double differentiation, $\mathbf{I}_2$ stands for the $2\times2$ identity matrix, and
\begin{equation}\label{kskp}
k_{s}=\omega / c_{s}, \quad k_{p}=\omega / c_{p},
\end{equation}
with
\begin{equation}\label{cscp}
c_{s}=\sqrt{\mu / \rho}, \quad c_{p}=\sqrt{(\lambda+2 \mu) / \rho} .
\end{equation}
In particular, when $\omega=0$, we denote $\boldsymbol{\Gamma}^{0}$ by $\boldsymbol{\Gamma}$ for simplicity, and $\boldsymbol{\Gamma}$ has the following expression
\begin{equation}\label{gamma}
\boldsymbol{\Gamma}(\mathbf{x})=\alpha_1\ln|\mathbf{x}|\mathbf{I}_2-\alpha_2\frac{\mathbf{x}\mathbf{x}^T}{|\mathbf{x}|^2},
\end{equation}
where
\begin{equation}\label{alpha12}
\alpha_1=\frac{1}{4\pi}\left(\frac{1}{\mu}+\frac{1}{2 \mu+\lambda}\right) \quad \text { and } \quad \alpha_2=\frac{1}{4\pi}\left(\frac{1}{\mu}-\frac{1}{2 \mu+\lambda}\right) .
\end{equation}
Let $D$ be a bounded domain in $\mathbb{R}^{2}$ with connected Lipschitz boundaries. Then the single-layer potential associated with the fundamental solution $\boldsymbol{\Gamma}^{\omega}$ is defined by
\begin{equation}
\mathbf{S}_{\partial D}^{\omega}[\boldsymbol{\varphi}](\mathbf{x})=\int_{\partial D} \boldsymbol{\Gamma}^{\omega}(\mathbf{x}-\mathbf{y}) \boldsymbol{\varphi}(\mathbf{y}) d s(\mathbf{y}), \quad \mathbf{x} \in \mathbb{R}^{2},
\end{equation}
for $\boldsymbol{\varphi} \in L^{2}(\partial D)^{2}$. On the boundary $\partial D$, the conormal derivative of the single-layer potential satisfies the following jump formula
\begin{equation}\label{jump formula}
\left.\partial_{\boldsymbol{\nu}} \mathbf{S}_{\partial D}^{\omega}[\boldsymbol{\varphi}]\right|_{ \pm}(\mathbf{x})=\left( \pm \frac{1}{2} \mathbf{I}_2+\mathbf{K}_{\partial D}^{\omega, *}\right)[\boldsymbol{\varphi}](\mathbf{x}), 
\end{equation}
where
$$
\mathbf{K}_{\partial D}^{\omega, *}[\boldsymbol{\varphi}](\mathbf{x})=\text { p.v. } \int_{\partial D} \partial_{\boldsymbol{\nu}_{\mathbf{x}}} \boldsymbol{\Gamma}^{\omega}(\mathbf{x}-\mathbf{y}) \boldsymbol{\varphi}(\mathbf{y}) d s(\mathbf{y}),
$$
with p.v. standing for the Cauchy principal value. We would like to mention that the operator $\mathbf{K}_{\partial D}^{\omega, *}$ in \eqref{jump formula} is called the Neumann-Poincar\'e (N-P) operator, which is a critical operator in the investigation of metamaterials in elasticity. In what follows, we denote $\mathbf{S}_{\partial D}^{0}, \mathbf{K}_{\partial D}^{0, *}$ by $\mathbf{S}_{\partial D}, \mathbf{K}_{\partial D}^{*}$, respectively, for simplicity.
\subsection{Auxiliary results of asymptotic analysis}
\begin{lem}\label{lemgamma}
Suppose $\omega\in \mathbb{R}_{+}$ and $\omega\ll1$. There holds the following asymptotic expansion
\begin{equation}
\boldsymbol{\Gamma}^{\omega}(\mathbf{x})=\sum_{n=0}^\infty\Big(\omega^{2n}\boldsymbol{\Gamma}_{n}^{1}(\mathbf{x})
+\omega^{2n-2}\boldsymbol{\Gamma}_{n}^{2}(\mathbf{x})
+\omega^{2n}\ln{\omega}\boldsymbol{\Gamma}_{n}^{3}(\mathbf{x})
+\omega^{2n-2}\ln{\omega}\boldsymbol{\Gamma}_{n}^{4}(\mathbf{x})\Big),
\end{equation}
where
$$
\boldsymbol{\Gamma}_{n}^{1}(\mathbf{x})=\frac{1}{\mu}\frac{1}{c_{s}^{2n}}\Big(b_{n}\ln{|\mathbf{x}|}|\mathbf{x}|^{2n}\mathbf{I}_2+(c_{n}-b_{n}\ln{c_s})|\mathbf{x}|^{2n}\mathbf{I}_2\Big),
\quad\quad \boldsymbol{\Gamma}_{n}^{3}(\mathbf{x})=\frac{1}{\mu}\frac{b_{n}}{c_{s}^{2n}}|\mathbf{x}|^{2n}\mathbf{I}_2,\nonumber
$$
\begin{align}
\boldsymbol{\Gamma}_{n}^{2}(\mathbf{x})=&\frac{1}{\rho}2nb_n\big(\frac{1}{c_{s}^{2n}}-\frac{1}{c_{p}^{2n}}\big)\ln{|\mathbf{x}|}|\mathbf{x}|^{2n-2}\mathbf{I}_2\nonumber\\
&+\frac{1}{\rho}\Big((b_n+2nc_n)\big(\frac{1}{c_{s}^{2n}}-\frac{1}{c_{p}^{2n}}\big)-2nb_n\big(\frac{\ln{c_s}}{c_{s}^{2n}}-\frac{\ln{c_p}}{c_{p}^{2n}}\big)\Big)|\mathbf{x}|^{2n-2}\mathbf{I}_2\nonumber\\
&+\frac{1}{\rho}2n(2n-2)b_n\big(\frac{1}{c_{s}^{2n}}-\frac{1}{c_{p}^{2n}}\big)\ln{|\mathbf{x}|}|\mathbf{x}|^{2n-4}\mathbf{x}\mathbf{x}^T\nonumber\\
&+\frac{1}{\rho}\Big[\Big((4n-2)b_n+2n(2n-2)c_n\Big)\big(\frac{1}{c_{s}^{2n}}-\frac{1}{c_{p}^{2n}}\big)-2n(2n-2)b_n\big(\frac{\ln{c_s}}{c_{s}^{2n}}-\frac{\ln{c_p}}{c_{p}^{2n}}\big)\Big]|\mathbf{x}|^{2n-4}\mathbf{x}\mathbf{x}^T,\nonumber
\end{align}
and
$$
\boldsymbol{\Gamma}_{n}^{4}(\mathbf{x})=\frac{1}{\rho}2nb_n\big(\frac{1}{c_{s}^{2n}}-\frac{1}{c_{p}^{2n}}\big)\Big(|\mathbf{x}|^{2n-2}\mathbf{I}_2+(2n-2) |\mathbf{x}|^{2n-4}\mathbf{x}\mathbf{x}^T\Big).
$$
Here, the parameters $b_0=\frac{\pi}{2}$, $c_0=\frac{\pi}{4}E_c$ with $E_c=2\gamma -\mathrm{i}\pi-2\ln2$, $\gamma$ being Euler's constant, and
\begin{equation}\label{bncn}
b_n=\frac{(-1)^n}{2\pi}\frac{1}{2^{2n}(n!)^2},\quad c_n=b_n\big(\gamma-\ln2-\frac{\mathrm{i}\pi}{2}-\sum_{j=1}^{n}\frac{1}{j}\big), \quad n\geq 1.
\end{equation}
\end{lem}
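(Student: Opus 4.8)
The plan is to expand the two scalar building blocks of $\boldsymbol{\Gamma}^{\omega}$ in \eqref{fundamental solution} — namely $H_0^{(1)}(k_s|\mathbf{x}|)$ and $H_0^{(1)}(k_p|\mathbf{x}|)$ — using the classical small-argument series for the Hankel function, and then collect powers of $\omega$ and $\omega\ln\omega$. Recall $H_0^{(1)}(z)=J_0(z)+\mathrm{i}Y_0(z)$ with
$$
J_0(z)=\sum_{n=0}^\infty\frac{(-1)^n}{2^{2n}(n!)^2}z^{2n},\qquad
Y_0(z)=\frac{2}{\pi}\Big(\ln\frac{z}{2}+\gamma\Big)J_0(z)-\frac{2}{\pi}\sum_{n=1}^\infty\frac{(-1)^n}{2^{2n}(n!)^2}\Big(\sum_{j=1}^n\frac1j\Big)z^{2n}.
$$
Substituting $z=k_s|\mathbf{x}|=\omega|\mathbf{x}|/c_s$ and using $\ln(z/2)=\ln\omega+\ln|\mathbf{x}|-\ln c_s-\ln 2$, one sees each $z^{2n}$ contributes a term $\propto \omega^{2n}$ and a term $\propto\omega^{2n}\ln\omega$; gathering the purely-numerical constants into $b_n$ and $c_n$ exactly as in \eqref{bncn} (with the $b_0,c_0$ adjustment absorbing $-\mathrm{i}\pi$ from $H_0^{(1)}$) yields, after multiplying by $-\mathrm{i}/(4\mu)$, precisely the stated forms of $\boldsymbol{\Gamma}_n^1$ and $\boldsymbol{\Gamma}_n^3$. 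This is the routine half.

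The more delicate half is the $\nabla\nabla$ term. Here I would first expand $H_0^{(1)}(k_p|\mathbf{x}|)-H_0^{(1)}(k_s|\mathbf{x}|)$ as above to get a series $\sum_n \big(\text{const}\big)\,\omega^{2n}\big(\tfrac{1}{c_s^{2n}}-\tfrac{1}{c_p^{2n}}\big)|\mathbf{x}|^{2n}$ plus the corresponding $\ln|\mathbf{x}|$ and $\ln\omega$ pieces, noting that the $n=0$ terms cancel between $k_s$ and $k_p$ so the series effectively starts at $n=1$; then apply the Hessian $\nabla\nabla$ to the scalar functions $|\mathbf{x}|^{2n}$ and $|\mathbf{x}|^{2n}\ln|\mathbf{x}|$. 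The key identities are
$$
\nabla\nabla|\mathbf{x}|^{2n}=2n|\mathbf{x}|^{2n-2}\mathbf{I}_2+2n(2n-2)|\mathbf{x}|^{2n-4}\mathbf{x}\mathbf{x}^T,
$$
together with the analogous (and messier) formula for $\nabla\nabla\big(|\mathbf{x}|^{2n}\ln|\mathbf{x}|\big)$, which produces both a $\ln|\mathbf{x}|$-weighted copy of the above and an extra non-logarithmic term (this is the source of the $(4n-2)b_n$ and $b_n+2nc_n$ coefficients in $\boldsymbol{\Gamma}_n^2$). Because the original prefactor is $\mathrm{i}/(4\omega^2\rho)$, applying $\nabla\nabla$ to the $\omega^{2n}$-series lowers the power to $\omega^{2n-2}$, explaining why $\boldsymbol{\Gamma}_n^2$ and $\boldsymbol{\Gamma}_n^4$ carry $\omega^{2n-2}$ and $\omega^{2n-2}\ln\omega$. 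Matching the $\ln\omega$ pieces gives $\boldsymbol{\Gamma}_n^4$, and matching the remaining pieces gives the four-line expression for $\boldsymbol{\Gamma}_n^2$.

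The main obstacle is purely bookkeeping: one must carefully track which terms carry $\ln\omega$, which carry $\ln|\mathbf{x}|$, and which carry $\ln c_s$ or $\ln c_p$, since $\ln(k_{s,p}|\mathbf{x}|/2)$ mixes all of them, and then verify that after differentiation the $\ln|\mathbf{x}|$-terms and the constant terms separate exactly into the claimed coefficients. I would organize this by writing the scalar expansion as $f_s(\omega,|\mathbf{x}|)$ and $f_p(\omega,|\mathbf{x}|)$ with clearly labelled $\ln\omega$ and non-$\ln\omega$ parts, reducing to a single scalar computation of $\nabla\nabla$ on $r^{2n}$ and $r^{2n}\ln r$, and only at the very end substituting the constants $b_n,c_n$ and the factors $c_s^{-2n}-c_p^{-2n}$. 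A secondary point worth a remark is convergence and term-by-term differentiation: since for $|\mathbf{x}|$ in a bounded set and $\omega\ll1$ the Hankel series converge absolutely and locally uniformly (away from $\mathbf{x}=0$), differentiating under the sum is justified, so the formal manipulations above are rigorous.
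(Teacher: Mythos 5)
Your proposal is correct and follows essentially the same route as the paper: expand the scalar factor $-\tfrac{\mathrm{i}}{4}H_0^{(1)}(k|\mathbf{x}|)$ as a double series in $\omega^{2n}$ and $\omega^{2n}\ln\omega$ (the paper simply cites this expansion from the Minnaert-resonance reference rather than re-deriving it from $J_0+\mathrm{i}Y_0$), apply $\nabla\nabla$ term by term, and substitute into \eqref{fundamental solution}. Your extra observations — the cancellation of the $n=0$ term in $c_s^{-2n}-c_p^{-2n}$ explaining why there is no $\omega^{-2}$ singularity, and the justification of term-by-term differentiation by local uniform convergence away from $\mathbf{x}=0$ — are points the paper leaves implicit.
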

\begin{proof}
From \cite{Minnaertresonances}, we have, for $k\ll1$,
\begin{equation}\label{H01}
-\frac{\mathrm{i}}{4}H_0^{(1)}(k|\mathbf{x}|)=\sum_{n=0}^{+\infty}c_n(k|\mathbf{x}|)^{2n}+\sum_{n=0}^{+\infty}b_n\ln(k|\mathbf{x}|)(k|\mathbf{x}|)^{2n},
\end{equation}
where $b_{n}$, $c_{n}$ are defined in \eqref{bncn}. By direct calculations, it holds that
\begin{align}\label{GH01}
&\nabla\nabla\Big(-\frac{\mathrm{i}}{4}H_0^{(1)}(k|\mathbf{x}|)\Big)=\sum_{n=0}^\infty(b_n+2nc_n)k^{2n}|\mathbf{x}|^{2n-2}\mathbf{I}_2+\sum_{n=0}^\infty2nb_nk^{2n}\ln(k|\mathbf{x}|)|\mathbf{x}|^{2n-2}\mathbf{I}_2\nonumber\\
&+\sum_{n=0}^\infty\Big((4n-2)b_n+2n(2n-2)c_n\Big)k^{2n}|\mathbf{x}|^{2n-4}\mathbf{x}\mathbf{x}^T\nonumber\\
&+\sum_{n=0}^\infty2n(2n-2)b_nk^{2n}\ln(k|\mathbf{x}|)|\mathbf{x}|^{2n-4}\mathbf{x}\mathbf{x}^T.
\end{align}
The proof can be completed by substituting equations \eqref{H01} and \eqref{GH01} into \eqref{fundamental solution}.
\end{proof}
\begin{prop}\label{fundamental solution expansion}
The fundamental solution defined in \eqref{fundamental solution} has the following asymptotic expansion for $\omega\ll1$,
$$
\boldsymbol{\Gamma}^{\omega}(\mathbf{x})=\boldsymbol{\Gamma}(\mathbf{x})+\gamma_{\omega}\mathbf{I}_2+\omega^2\ln\omega\rho\boldsymbol{\Gamma_1}(\mathbf{x})
+\omega^2\rho\ln{\sqrt{\rho}}\boldsymbol{\Gamma_1}(\mathbf{x})
+\omega^2\rho\boldsymbol{\Gamma_2}(\mathbf{x})+ \mathcal{O}(\omega^4\ln{\omega}+\omega^4),
$$
where
$$
\boldsymbol{\Gamma_1}(\mathbf{x})=\beta_2|\mathbf{x}|^{2}\mathbf{I}_2
+\beta_3\mathbf{x}\mathbf{x}^T,\quad \quad \boldsymbol{\Gamma_2}(\mathbf{x})=\beta_1|\mathbf{x}|^{2}\mathbf{I}_2+\beta_2\ln|\mathbf{x}||\mathbf{x}|^{2}\mathbf{I}_2
+\beta_3\ln|\mathbf{x}|\mathbf{x}\mathbf{x}^{T}+\beta_4\mathbf{x}\mathbf{x}^{T},
$$
and $\gamma_{\omega}=\ln (\sqrt{\rho}\omega) \alpha_1+\alpha$ with
$$ \alpha=\frac{\alpha_1}{2}E_c+\frac{\alpha_2}{2}-\frac{1}{8\pi}\left(\frac{1}{\mu}\ln\mu+\frac{1}{2\mu+\lambda}\ln{(2\mu+\lambda)}\right).
$$
Moreover,
\begin{align}
\beta_1:&=\left(\frac{1}{2}E_c-1\right)\beta_2-\frac{1}{8}\beta_3+\frac{1}{2^6\pi}\left(\frac{3}{\mu^2}\ln{\mu} +\frac{1}{(2\mu+\lambda)^2}\ln{(2\mu+\lambda)}\right),\nonumber
\end{align}
$$
\beta_2:=-\frac{1}{2^5\pi}\left(\frac{3}{\mu^2}+\frac{1}{(2\mu+\lambda)^2}\right),\quad \quad \beta_3:=\frac{1}{2^4\pi}\left(\frac{1}{\mu^2}-\frac{1}{(2\mu+\lambda)^2}\right),
$$
and
$$
\beta_4:=\frac{1}{4}\left(2E_c-3\right)\beta_3
-\frac{1}{2^5\pi}\left(\frac{1}{\mu^2}\ln{\mu} -\frac{1}{(2\mu+\lambda)^2}\ln{(2\mu+\lambda)}\right).
$$
\end{prop}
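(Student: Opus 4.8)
The plan is to feed the expansion of Lemma \ref{lemgamma} into the identity $\boldsymbol{\Gamma}^{\omega}=\boldsymbol{\Gamma}+(\text{correction terms})$ and simply collect all contributions up to and including order $\omega^{2}\ln\omega$ and $\omega^{2}$, discarding everything of order $\omega^{4}\ln\omega$ or higher. Concretely, I would organize the sum $\sum_{n\geq 0}$ in Lemma \ref{lemgamma} by the genuine power of $\omega$ that each term carries: the $\boldsymbol{\Gamma}_n^2$ and $\boldsymbol{\Gamma}_n^4$ pieces come multiplied by $\omega^{2n-2}$ (and $\omega^{2n-2}\ln\omega$), so $n=0$ and $n=1$ both feed the low-order part, while $\boldsymbol{\Gamma}_n^1,\boldsymbol{\Gamma}_n^3$ carry $\omega^{2n}$ (and $\omega^{2n}\ln\omega$), so only $n=0,1$ matter there too. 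Thus the whole computation reduces to assembling five groups of terms: (a) the $\omega^{0}$ part, which must reproduce $\boldsymbol{\Gamma}(\mathbf{x})+\gamma_\omega\mathbf{I}_2$; (b) the $\omega^{2}\ln\omega$ part; (c) the $\omega^{2}\ln\omega$ part that actually reads $\omega^{2}\ln(\sqrt\rho)$ after splitting $\ln(k_{s,p}|\mathbf{x}|)=\ln\omega-\ln c_{s,p}+\ln|\mathbf{x}|$ and using $c_s^{-2}=\rho/\mu$, $c_p^{-2}=\rho/(2\mu+\lambda)$; (d) the remaining $\omega^{2}$ part, i.e.\ $\omega^2\rho\,\boldsymbol{\Gamma_2}$; and (e) the error $\mathcal O(\omega^4\ln\omega+\omega^4)$ from $n\geq 2$.

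The key step is the $\omega^{0}$ bookkeeping and the factorization $k_{s}^{2n}=(\omega^2\rho/\mu)^n$, $k_{p}^{2n}=(\omega^2\rho/(2\mu+\lambda))^n$, which is what converts the formal powers $k_s^{2n},k_p^{2n}$ appearing in Lemma \ref{lemgamma} into honest powers of $\omega^2$ with coefficients depending on $\rho,\mu,\lambda$. For $n=0$: the $\boldsymbol{\Gamma}_0^1$ term gives $\frac1\mu(b_0\ln|\mathbf{x}|+c_0-b_0\ln c_s)\mathbf{I}_2$, and $\boldsymbol{\Gamma}_0^2$ (coming from the $\nabla\nabla$ piece, with the $n=0$ terms $b_0+0$ and $(4\cdot0-2)b_0$) gives an analogous $\ln|\mathbf{x}|\mathbf{I}_2$ and $\mathbf{x}\mathbf{x}^T/|\mathbf{x}|^2$ contribution with $1/\rho\cdot(1/c_s^{0}-1/c_p^{0})=0$ — wait, one must be careful here: at $n=0$ the prefactors $1/c_s^{2n}-1/c_p^{2n}$ vanish, so in fact $\boldsymbol{\Gamma}_0^2=\boldsymbol{\Gamma}_0^4=0$, and the entire $\omega^0$ contribution comes from $\boldsymbol{\Gamma}_0^1$ and $\boldsymbol{\Gamma}_0^3$ together with the $\nabla\nabla$ static terms; checking that this sums to exactly $\boldsymbol{\Gamma}(\mathbf{x})=\alpha_1\ln|\mathbf{x}|\mathbf{I}_2-\alpha_2\mathbf{x}\mathbf{x}^T/|\mathbf{x}|^2$ plus the $\mathbf{x}$-independent constant $\gamma_\omega\mathbf{I}_2$, using $\alpha_1=\frac1{4\pi}(\frac1\mu+\frac1{2\mu+\lambda})$, $\alpha_2=\frac1{4\pi}(\frac1\mu-\frac1{2\mu+\lambda})$, $b_0=\pi/2$, $c_0=\frac\pi4 E_c$, pins down $\alpha$ and $\gamma_\omega$. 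For $n=1$ one reads off $b_1=-\frac1{8\pi}$, $c_1=b_1(\gamma-\ln2-\mathrm{i}\pi/2-1)$ and substitutes; the coefficients of $|\mathbf{x}|^2\mathbf{I}_2$ and $\mathbf{x}\mathbf{x}^T$ (both with and without $\ln|\mathbf{x}|$, and both with and without the extra $\ln\omega$) then must be matched against $\beta_1,\beta_2,\beta_3,\beta_4$ and against $\ln(\sqrt\rho)\beta_2,\ln(\sqrt\rho)\beta_3$. Here the $\boldsymbol{\Gamma}_1^2,\boldsymbol{\Gamma}_1^4$ contributions do survive (the prefactor $1/c_s^2-1/c_p^2=\rho(1/\mu-1/(2\mu+\lambda))$ is nonzero) and supply the genuinely elastic $\mathbf{x}\mathbf{x}^T$ corrections, while $\boldsymbol{\Gamma}_1^1,\boldsymbol{\Gamma}_1^3$ supply the $|\mathbf{x}|^2\mathbf{I}_2$ corrections; the stated $\beta_2=-\frac1{2^5\pi}(3/\mu^2+1/(2\mu+\lambda)^2)$ and $\beta_3=\frac1{2^4\pi}(1/\mu^2-1/(2\mu+\lambda)^2)$ should fall out of combining the $\mathbf{I}_2$-part of $\nabla\nabla$ (coefficient $b_1+2c_1$, but only the $b_1$-proportional part is the $\ln$-coefficient) with the direct $H_0^{(1)}(k_s|\mathbf{x}|)\mathbf{I}_2$ term.

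I do not expect any of this to be hard, only tedious: the argument is a finite-order Taylor truncation with explicit kernels already in hand from Lemma \ref{lemgamma}. The one place demanding genuine care — and the natural candidate for an error — is the separation of the $\omega^2\ln\omega$ terms into a ``$\ln\omega$ piece'' and a ``$\ln\sqrt\rho$ piece'': because $\ln k_{s}=\ln\omega-\frac12\ln(\mu/\rho)$ and $\ln k_p=\ln\omega-\frac12\ln((2\mu+\lambda)/\rho)$, the $\ln$-coefficient $b_1 k^{2}|\mathbf{x}|^2$ etc.\ splits as $b_1\rho\,\omega^2|\mathbf{x}|^2(\ln\omega+\ln\sqrt\rho-\ln\sqrt\mu)$, and one must confirm that the $\ln\omega$ and $\ln\sqrt\rho$ parts recombine (over the $c_s$ and $c_p$ channels) into the same vector function, namely $\rho\,\boldsymbol{\Gamma_1}(\mathbf{x})=\rho(\beta_2|\mathbf{x}|^2\mathbf{I}_2+\beta_3\mathbf{x}\mathbf{x}^T)$, whereas the $-\ln\sqrt\mu$, $-\ln\sqrt{2\mu+\lambda}$ remainders get absorbed into $\beta_1$ and $\beta_4$ (this is exactly why $\beta_1$ and $\beta_4$ carry the $\ln\mu$ and $\ln(2\mu+\lambda)$ terms while $\beta_2,\beta_3$ do not). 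Once this splitting is verified, reading off $\beta_1,\beta_4$ is pure substitution of $b_1,c_1$ and simplification, and the $n\geq 2$ tail is manifestly $\mathcal O(\omega^4\ln\omega+\omega^4)$ since its lowest power is $\omega^{2\cdot 2-2}=\omega^2$ for $\boldsymbol{\Gamma}_2^{2,4}$ — no, again one checks $\boldsymbol{\Gamma}_2^{2,4}$ carry $\omega^{2}$; hmm, that would be a problem. Let me reconsider: $\boldsymbol{\Gamma}_n^2$ is multiplied by $\omega^{2n-2}$, so $n=2$ gives $\omega^{2}$, not $\omega^4$. So I must re-examine whether $\boldsymbol{\Gamma}_2^2$ contributes at order $\omega^2$; if it does, the stated $\boldsymbol{\Gamma_2}$ is incomplete unless those terms vanish or are reabsorbed — the resolution is that $\boldsymbol{\Gamma}_2^2$ comes with $|\mathbf{x}|^{2n-2}=|\mathbf{x}|^2$ and $|\mathbf{x}|^{2n-4}\mathbf{x}\mathbf{x}^T=\mathbf{x}\mathbf{x}^T$, i.e.\ genuinely order-$\omega^2$ polynomial-in-$\mathbf{x}$ terms, and checking their coefficients is therefore an additional, unavoidable part of the calculation that the proof must include (they will contribute to $\beta_1,\dots,\beta_4$ alongside the $n=1$ terms). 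This is the subtle point: one must correctly identify \emph{every} $n$ for which $\omega^{2n-2}\leq\omega^2$, i.e.\ $n\leq 2$, not just $n\leq 1$. After accounting for $n=0,1,2$ on the $\boldsymbol{\Gamma}^{2},\boldsymbol{\Gamma}^4$ channels and $n=0,1$ on the $\boldsymbol{\Gamma}^1,\boldsymbol{\Gamma}^3$ channels, the remaining sum is $\mathcal O(\omega^4\ln\omega+\omega^4)$ and the proposition follows by collecting coefficients.
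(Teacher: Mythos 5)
Your approach is exactly the paper's: Proposition \ref{fundamental solution expansion} is proved by substituting Lemma \ref{lemgamma}, splitting $\ln(k_{s,p}|\mathbf{x}|)$ and using $c_s^{-2}=\rho/\mu$, $c_p^{-2}=\rho/(2\mu+\lambda)$, and collecting coefficients, which the paper dispatches in one line as ``tedious calculations.'' Your closing self-correction is the crucial bookkeeping point and you resolve it correctly: since the $\boldsymbol{\Gamma}_n^{2}$ and $\boldsymbol{\Gamma}_n^{4}$ channels carry $\omega^{2n-2}$, the static part $\boldsymbol{\Gamma}(\mathbf{x})=\alpha_1\ln|\mathbf{x}|\mathbf{I}_2-\alpha_2\mathbf{x}\mathbf{x}^T/|\mathbf{x}|^2$ (in particular its $\mathbf{x}\mathbf{x}^T$ piece) actually arises from $n=1$ on these channels and not from $n=0$, and the order-$\omega^2$ coefficients $\beta_1,\dots,\beta_4$ receive contributions from $\boldsymbol{\Gamma}_2^{2},\boldsymbol{\Gamma}_2^{4}$ as well as from $\boldsymbol{\Gamma}_1^{1},\boldsymbol{\Gamma}_1^{3}$, so one must sweep $n\leq 2$ on channels $2,4$ and $n\leq 1$ on channels $1,3$ before the tail is $\mathcal{O}(\omega^4\ln\omega+\omega^4)$.
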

\begin{proof}
The proof follows from the asymptotic expansion of the function $\boldsymbol{\Gamma}^{\omega}$ in Lemma \ref{lemgamma} and tedious calculations.
\end{proof}
\begin{prop}\label{fundamental tilde solution expansion}
The fundamental solution defined in \eqref{fundamental solution}, with parameters $(\tilde{\lambda}, \tilde{\mu},\tilde{\rho})$, has the following asymptotic expansion for $\omega\ll1$,
\begin{align}
\tilde{\boldsymbol{\Gamma}}^{\omega}(\mathbf{x})&=\delta\boldsymbol{\Gamma}(\mathbf{x})+\delta\gamma_{\tau\omega}\mathbf{I}_2
+\delta\omega^2\ln\omega\rho\tau^2\boldsymbol{\Gamma_1}(\mathbf{x})+\delta\omega^2\rho\tau^2 \ln(\sqrt{\rho}\tau)\boldsymbol{\Gamma_{1}}(\mathbf{x})\nonumber \\
&+\delta\omega^2\rho\tau^2\boldsymbol{\Gamma_2}(\mathbf{x})
+\mathcal{O}(\delta\tau^4\omega^4\ln{\omega}+\delta\tau^4\omega^4),\nonumber
\end{align}
where $\gamma_{\tau\omega}=\ln (\sqrt{\rho}\tau\omega) \alpha_1+\alpha$.
\end{prop}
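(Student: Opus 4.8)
The plan is to derive the stated expansion from Proposition \ref{fundamental solution expansion} by a single rescaling, rather than redoing the Hankel-function asymptotics from scratch. First I would record how the elastic wave speeds transform under \eqref{delta}: by \eqref{cscp}, $\tilde c_s = \sqrt{\tilde\mu/\tilde\rho} = \sqrt{(\mu/\delta)(\epsilon/\rho)} = c_s/\tau$ and, likewise, $\tilde c_p = \sqrt{(\tilde\lambda+2\tilde\mu)/\tilde\rho} = c_p/\tau$, where $\tau = \sqrt{\delta/\epsilon}$ as in \eqref{tau2}. Hence by \eqref{kskp} the shear and pressure wavenumbers associated with the parameters $(\tilde\lambda,\tilde\mu,\tilde\rho)$ are $\tilde k_s = \omega/\tilde c_s = \tau k_s$ and $\tilde k_p = \tau k_p$, with $k_s,k_p$ the wavenumbers for $(\lambda,\mu,\rho)$.

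Next I would substitute these identities, together with $\tilde\mu = \mu/\delta$, $\tilde\rho = \rho/\epsilon$ and the relation $\epsilon = \delta/\tau^2$ (immediate from \eqref{tau2}), into \eqref{fundamental solution} written for $(\tilde\lambda,\tilde\mu,\tilde\rho)$. The first prefactor becomes $-\mathrm{i}\delta/(4\mu)$ and the second becomes $\mathrm{i}\epsilon/(4\omega^2\rho) = \mathrm{i}\delta/(4(\tau\omega)^2\rho)$, while the Hankel functions acquire the arguments $\tau k_s|\mathbf{x}|$ and $\tau k_p|\mathbf{x}|$. Comparing with \eqref{fundamental solution} evaluated at frequency $\tau\omega$—whose wavenumbers are precisely $\tau k_s$ and $\tau k_p$—one reads off the scaling identity
$$
\tilde{\boldsymbol{\Gamma}}^{\omega}(\mathbf{x}) = \delta\,\boldsymbol{\Gamma}^{\tau\omega}(\mathbf{x}),
$$
in which the right-hand side involves only the original parameters $(\lambda,\mu,\rho)$.

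Finally I would invoke Proposition \ref{fundamental solution expansion} with $\omega$ replaced by $\tau\omega$, which is legitimate since $\tau = \mathcal{O}(1)$ forces $\tau\omega\ll1$ whenever $\omega\ll1$, and the remainder $\mathcal{O}((\tau\omega)^4\ln(\tau\omega)+(\tau\omega)^4)$ becomes $\mathcal{O}(\tau^4\omega^4\ln\omega+\tau^4\omega^4)$. Multiplying through by $\delta$ and splitting $\ln(\tau\omega) = \ln\omega+\ln\tau$ in the logarithmic term—so that $(\tau\omega)^2\ln(\tau\omega)\rho\boldsymbol{\Gamma_1} + (\tau\omega)^2\rho\ln\sqrt{\rho}\,\boldsymbol{\Gamma_1} = \tau^2\omega^2\ln\omega\,\rho\boldsymbol{\Gamma_1} + \tau^2\omega^2\rho\ln(\sqrt{\rho}\tau)\boldsymbol{\Gamma_1}$, while $\gamma_{\tau\omega} = \ln(\sqrt{\rho}\tau\omega)\alpha_1+\alpha$—produces exactly the claimed formula. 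I do not anticipate a genuine obstacle here: the only delicate points are the consistent bookkeeping of the three parameters $\delta,\epsilon,\tau$ when establishing the scaling identity, and the correct redistribution of the $\ln\tau$ contribution between the $\omega^2\ln\omega$ and $\omega^2$ terms of the expansion.
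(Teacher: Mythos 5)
Your proposal is correct. The scaling identity $\tilde{\boldsymbol{\Gamma}}^{\omega}(\mathbf{x}) = \delta\,\boldsymbol{\Gamma}^{\tau\omega}(\mathbf{x})$ is exactly right: both prefactors check out ($1/\tilde\mu = \delta/\mu$, and $1/(\omega^2\tilde\rho) = \epsilon/(\omega^2\rho) = \delta/((\tau\omega)^2\rho)$ since $\epsilon = \delta/\tau^2$), the wavenumbers transform as $\tilde k_{s,p} = \tau k_{s,p}$, and the bookkeeping of $\ln(\tau\omega) + \ln\sqrt{\rho} = \ln\omega + \ln(\sqrt{\rho}\tau)$ correctly produces the split between the $\omega^2\ln\omega$ and $\omega^2$ terms. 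The paper's one-line proof ("As Proposition \ref{fundamental solution expansion}, summing relations \eqref{delta} and \eqref{tau2} provides an proof") clearly intends the same idea—apply the contrast relations to Proposition \ref{fundamental solution expansion}—but you have done the reader a favor by isolating the clean scaling identity $\tilde{\boldsymbol{\Gamma}}^{\omega} = \delta\boldsymbol{\Gamma}^{\tau\omega}$, which makes the substitution $\omega \mapsto \tau\omega$ and the absorption of $\ln\tau$ into the constant remainder term transparent rather than implicit.
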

\begin{proof}
As Proposition \ref{fundamental solution expansion}, summing relations \eqref{delta} and \eqref{tau2} provides an proof.
\end{proof}
\begin{lem}\label{calculatelame}
The following identities hold
\begin{equation}
\mathcal{L}_{\lambda,\mu} \boldsymbol{\Gamma_1}(\mathbf{x})=a_{\lambda,\mu}\mathbf{I}_2,\quad\quad
\mathcal{L}_{\lambda,\mu} \boldsymbol{\Gamma_2}(\mathbf{x})=b_{\lambda,\mu}\mathbf{I}_2-\boldsymbol{\Gamma}(\mathbf{x}),\nonumber
\end{equation}
where the constants
\begin{equation}\label{lamea}
a_{\lambda,\mu}:=(2\lambda+6\mu)\beta_2+(3\lambda+5\mu)\beta_3,
\end{equation}
and
\begin{equation}\label{lameb}
b_{\lambda,\mu}:=\beta_{1}(2\lambda+6\mu)+\beta_{2}(\lambda+5\mu)+\beta_{3}(\lambda+\mu)+\beta_{4}(3\lambda+5\mu),
\end{equation}
with $\beta_{1}$, $\beta_{2}$ and $\beta_{3}$, $\beta_{4}$ defined in Proposition \ref{fundamental solution expansion}.
\end{lem}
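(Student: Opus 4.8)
The plan is to evaluate $\mathcal{L}_{\lambda,\mu}$ by a direct calculation, acting column by column on each of the two matrix-valued functions. Writing $\mathcal{L}_{\lambda,\mu}\mathbf{u}=\mu\Delta\mathbf{u}+(\lambda+\mu)\nabla(\nabla\cdot\mathbf{u})$ and observing that $\boldsymbol{\Gamma_1}$ and $\boldsymbol{\Gamma_2}$ are assembled from only four elementary tensor fields --- namely $|\mathbf{x}|^{2}\mathbf{I}_2$, $\mathbf{x}\mathbf{x}^{T}$, $\ln|\mathbf{x}|\,|\mathbf{x}|^{2}\mathbf{I}_2$ and $\ln|\mathbf{x}|\,\mathbf{x}\mathbf{x}^{T}$ --- it suffices to compute $\mathcal{L}_{\lambda,\mu}$ on these four atoms. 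Each such computation is elementary, relying only on $\Delta\ln|\mathbf{x}|=0$ and $\nabla\ln|\mathbf{x}|=\mathbf{x}/|\mathbf{x}|^{2}$ on $\mathbb{R}^{2}\setminus\{0\}$, together with the Leibniz rule $\Delta(gh)=(\Delta g)h+2\nabla g\cdot\nabla h+g\Delta h$ for the two logarithmic atoms.

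First I would dispose of $\boldsymbol{\Gamma_1}$. A short calculation gives $\mathcal{L}_{\lambda,\mu}(|\mathbf{x}|^{2}\mathbf{I}_2)=(2\lambda+6\mu)\mathbf{I}_2$ and $\mathcal{L}_{\lambda,\mu}(\mathbf{x}\mathbf{x}^{T})=(3\lambda+5\mu)\mathbf{I}_2$; both are pure multiples of $\mathbf{I}_2$, with no logarithmic or $\mathbf{x}\mathbf{x}^{T}$ leftover. Forming the combination with the coefficients $\beta_{2},\beta_{3}$ from Proposition \ref{fundamental solution expansion} immediately yields $\mathcal{L}_{\lambda,\mu}\boldsymbol{\Gamma_1}=\big((2\lambda+6\mu)\beta_{2}+(3\lambda+5\mu)\beta_{3}\big)\mathbf{I}_2$, which is exactly $a_{\lambda,\mu}\mathbf{I}_2$ by \eqref{lamea}.

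Next, for $\boldsymbol{\Gamma_2}$ I would compute $\mathcal{L}_{\lambda,\mu}(\ln|\mathbf{x}|\,|\mathbf{x}|^{2}\mathbf{I}_2)$ and $\mathcal{L}_{\lambda,\mu}(\ln|\mathbf{x}|\,\mathbf{x}\mathbf{x}^{T})$; each turns out to be a linear combination, with $\lambda,\mu$-polynomial coefficients, of $\mathbf{I}_2$, $\ln|\mathbf{x}|\,\mathbf{I}_2$ and $\mathbf{x}\mathbf{x}^{T}/|\mathbf{x}|^{2}$. Adding the $\beta_{1}$- and $\beta_{4}$-contributions (again pure multiples of $\mathbf{I}_2$ by the first step), the total organizes as $\mathcal{L}_{\lambda,\mu}\boldsymbol{\Gamma_2}=C_{0}\mathbf{I}_2+C_{1}\ln|\mathbf{x}|\,\mathbf{I}_2+C_{2}\,\mathbf{x}\mathbf{x}^{T}/|\mathbf{x}|^{2}$, where inspection shows $C_{0}=\beta_{1}(2\lambda+6\mu)+\beta_{2}(\lambda+5\mu)+\beta_{3}(\lambda+\mu)+\beta_{4}(3\lambda+5\mu)=b_{\lambda,\mu}$ from \eqref{lameb}. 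It then remains to check the two scalar identities $C_{1}=-\alpha_{1}$ and $C_{2}=\alpha_{2}$, with $\alpha_{1},\alpha_{2}$ as in \eqref{alpha12}; once these hold, $C_{1}\ln|\mathbf{x}|\,\mathbf{I}_2+C_{2}\,\mathbf{x}\mathbf{x}^{T}/|\mathbf{x}|^{2}=-\boldsymbol{\Gamma}(\mathbf{x})$ by \eqref{gamma}, and the second claimed identity follows.

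The only real work is this last verification: after collecting terms, $C_{1}$ and $C_{2}$ are each of the shape (linear in $\lambda,\mu$)$\times\beta_{2}+$(linear in $\lambda,\mu$)$\times\beta_{3}$, and one must confirm that the coefficients of $1/\mu^{2}$ and of $1/(2\mu+\lambda)^{2}$ simplify to $\mp\frac{1}{4\pi}\big(\frac{1}{\mu}+\frac{1}{2\mu+\lambda}\big)$ and $\frac{1}{4\pi}\big(\frac{1}{\mu}-\frac{1}{2\mu+\lambda}\big)$, respectively; this is the step that actually uses the precise numerology of the $\beta_{i}$ and is the main (though routine) obstacle. As an independent sanity check one can sidestep the $\beta_{i}$ algebra: since $(\mathcal{L}_{\lambda,\mu}+\omega^{2}\rho)\boldsymbol{\Gamma}^{\omega}(\mathbf{x})=0$ for $\mathbf{x}\neq0$, substituting the expansion of Proposition \ref{fundamental solution expansion} and using $\mathcal{L}_{\lambda,\mu}\boldsymbol{\Gamma}=0$ and $\mathcal{L}_{\lambda,\mu}(\gamma_{\omega}\mathbf{I}_2)=0$, the vanishing of the coefficients of $\omega^{2}\rho\ln\omega$ and of $\omega^{2}\rho$ forces $\mathcal{L}_{\lambda,\mu}\boldsymbol{\Gamma_1}=-\alpha_{1}\mathbf{I}_2$ and $\mathcal{L}_{\lambda,\mu}\boldsymbol{\Gamma_2}=-\alpha\,\mathbf{I}_2-\boldsymbol{\Gamma}$, in agreement with the stated identities and giving the closed forms $a_{\lambda,\mu}=-\alpha_{1}$ and $b_{\lambda,\mu}=-\alpha$.
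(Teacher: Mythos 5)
Your main argument is the same as the paper's: you evaluate $\mathcal{L}_{\lambda,\mu}$ on the same four atoms $|\mathbf{x}|^{2}\mathbf{I}_2$, $\mathbf{x}\mathbf{x}^{T}$, $\ln|\mathbf{x}|\,|\mathbf{x}|^{2}\mathbf{I}_2$, $\ln|\mathbf{x}|\,\mathbf{x}\mathbf{x}^{T}$, assemble the combination with the $\beta_i$, and observe that the $\ln|\mathbf{x}|\,\mathbf{I}_2$ and $\mathbf{x}\mathbf{x}^{T}/|\mathbf{x}|^{2}$ pieces reconstitute $-\boldsymbol{\Gamma}$. That is precisely the paper's proof. (One minor slip: you frame the two scalar identities as $C_1=-\alpha_1$ and $C_2=\alpha_2$ with $C_1$ possibly a fresh combination, but in fact $C_1=\beta_2(2\lambda+6\mu)+\beta_3(3\lambda+5\mu)=a_{\lambda,\mu}$ already, so the first of those checks is not independent of the $\boldsymbol{\Gamma_1}$ computation; only the $C_2=\alpha_2$ check and the $C_0=b_{\lambda,\mu}$ bookkeeping are genuinely new content in the $\boldsymbol{\Gamma_2}$ case.)

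Your ``sanity check'' is, in fact, a genuinely different and cleaner route that the paper does not take: since $(\mathcal{L}_{\lambda,\mu}+\omega^2\rho)\boldsymbol{\Gamma}^{\omega}=0$ for $\mathbf{x}\neq 0$, $\mathcal{L}_{\lambda,\mu}\boldsymbol{\Gamma}=0$, and $\gamma_\omega\mathbf{I}_2$ is constant in $\mathbf{x}$, matching the $\omega^2\ln\omega$ and $\omega^2$ coefficients of the expansion in Proposition \ref{fundamental solution expansion} forces $\mathcal{L}_{\lambda,\mu}\boldsymbol{\Gamma_1}=-\alpha_1\mathbf{I}_2$ and, after cancelling the $\ln\sqrt{\rho}$ terms, $\mathcal{L}_{\lambda,\mu}\boldsymbol{\Gamma_2}=-\alpha\,\mathbf{I}_2-\boldsymbol{\Gamma}$. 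This avoids the $\beta_i$ arithmetic entirely and exposes the closed forms $a_{\lambda,\mu}=-\alpha_1$, $b_{\lambda,\mu}=-\alpha$, which the paper never records; the trade-off is that it presupposes the correctness of Proposition \ref{fundamental solution expansion} (and implicitly that the $\mathcal{O}(\omega^4\ln\omega)$ remainder is stable under $\mathcal{L}_{\lambda,\mu}$, which holds here because the remainder is an explicit power-log series in $\omega$). If you promote this from a sanity check to the primary proof, make that dependence explicit.
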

\begin{proof}
By direct calculations, there exists the following facts
$$
\mathcal{L}_{\lambda,\mu}\left(|\mathbf{x}|^{2}\mathbf{I}_2\right)=(2\lambda+6\mu)\mathbf{I}_2,\quad \quad \quad \mathcal{L}_{\lambda,\mu}\left(\mathbf{x}\mathbf{x}^{T}\right)=(3\lambda+5\mu)\mathbf{I}_2,
$$
$$
\mathcal{L}_{\lambda,\mu}\left(\ln|\mathbf{x}||\mathbf{x}|^{2}\mathbf{I}_2\right)=(\lambda+5\mu)\mathbf{I}_2+(2\lambda+6\mu)\ln|\mathbf{x}|\mathbf{I}_2
+(2\lambda+2\mu)\frac{\mathbf{x}\mathbf{x}^{T}}{|\mathbf{x}|^2},
$$
and
$$
\mathcal{L}_{\lambda,\mu}\left(\ln|\mathbf{x}|\mathbf{x}\mathbf{x}^{T}\right)=(\lambda+\mu)\mathbf{I}_2+(3\lambda+5\mu)\ln|\mathbf{x}|\mathbf{I}_2
+(3\lambda+7\mu)\frac{\mathbf{x}\mathbf{x}^{T}}{|\mathbf{x}|^2}.
$$
Therefore, we can obtain the proof directly.
\end{proof}

From Proposition \ref{fundamental solution expansion}, the single-layer potential $\mathbf{S}_{\partial D}^{\omega}$ has the following asymptotic expansion for $\omega\ll 1$,
\begin{align}\label{singlelayer}
\mathbf{S}_{\partial D}^{\omega}[\boldsymbol{\varphi}](\mathbf{x})&=\hat{\mathbf{S}}_{\partial D}^{\omega}[\boldsymbol{\varphi}](\mathbf{x})
+\omega^2\ln\omega\rho\mathbf{S}_{\partial D,1}[\boldsymbol{\varphi}](\mathbf{x})
+\omega^2\rho\ln\sqrt{\rho}\mathbf{S}_{\partial D,1}[\boldsymbol{\varphi}](\mathbf{x})\nonumber\\
&+\omega^2\rho\mathbf{S}_{\partial D,2}[\boldsymbol{\varphi}](\mathbf{x})
+\mathcal{O}(\omega^4\ln{\omega}+\omega^4),
\end{align}
where
\begin{equation}\label{hats1}
\hat{\mathbf{S}}_{\partial D}^{\omega}[\boldsymbol{\varphi}](\mathbf{x}):=\mathbf{S}_{\partial D}[\boldsymbol{\varphi}](\mathbf{x})+\gamma_{\omega}\int_{\partial D}\boldsymbol{\varphi}(\mathbf{y})d s(\mathbf{y}),
\end{equation}
$$
\mathbf{S}_{\partial D,i}[\boldsymbol{\varphi}](\mathbf{x})=\int_{\partial D} \boldsymbol{\Gamma_i}(\mathbf{x}-\mathbf{y})\boldsymbol{\varphi}(\mathbf{y})d s(\mathbf{y}),\quad i=1,2.
$$
Next, we consider the asymptotic expansion for the Neumann-Poincar\'e (N-P) operator
\begin{align}\label{nplayer}
\mathbf{K}_{\partial D}^{\omega,*}[\boldsymbol{\varphi}](\mathbf{x})&=\mathbf{K}_{\partial D}^{*}[\boldsymbol{\varphi}](\mathbf{x})
+\omega^2\ln\omega\rho\mathbf{K}_{\partial D,1}^{*}[\boldsymbol{\varphi}](\mathbf{x})
+\omega^2\rho\ln\sqrt{\rho}\mathbf{K}_{\partial D,1}^{*}[\boldsymbol{\varphi}](\mathbf{x})\nonumber\\
&+\omega^2\rho\mathbf{K}_{\partial D,2}^{*}[\boldsymbol{\varphi}](\mathbf{x})+\mathcal{O}(\omega^4\ln{\omega}+\omega^4),
\end{align}
where
$$
\mathbf{K}_{\partial D,i}^{*}[\boldsymbol{\varphi}](\mathbf{x})=\int_{\partial D} \partial_{\boldsymbol{\nu}_{\mathbf{x}}} \boldsymbol{\Gamma_i}(\mathbf{x}-\mathbf{y})\boldsymbol{\varphi}(\mathbf{y})d s(\mathbf{y}),\quad i=1,2.
$$
Let $\tilde{\mathbf{S}}_{\partial D}^{\omega}$ be the single-layer potential operator associated with the parameters $(\tilde{\lambda}, \tilde{\mu},\tilde{\rho})$. Then by using Proposition \ref{fundamental tilde solution expansion}, there exists the following asymptotic analysis.
\begin{align}\label{tildesinglelayer}
\tilde{\mathbf{S}}_{\partial D}^{\omega}[\boldsymbol{\varphi}](\mathbf{x})&=\delta\hat{\mathbf{S}}_{\partial D}^{\tau\omega}[\boldsymbol{\varphi}](\mathbf{x})
+\delta\omega^2\ln\omega\rho\tau^2\mathbf{S}_{\partial D,1}[\boldsymbol{\varphi}](\mathbf{x})+\delta\omega^2\rho\tau^2 \ln(\sqrt{\rho}\tau)\mathbf{S}_{\partial D,1}[\boldsymbol{\varphi}](\mathbf{x})\nonumber\\
&+\delta\omega^2\rho\tau^2\mathbf{S}_{\partial D,2}[\boldsymbol{\varphi}](\mathbf{x})+\mathcal{O}(\delta\tau^4\omega^4\ln{\omega}+\delta\tau^4\omega^4),
\end{align}
where
\begin{equation}\label{hats2}
\hat{\mathbf{S}}_{\partial D}^{\tau\omega}[\boldsymbol{\varphi}](\mathbf{x}):=\mathbf{S}_{\partial D}[\boldsymbol{\varphi}](\mathbf{x})+\gamma_{\tau\omega}\int_{\partial D}\boldsymbol{\varphi}(\mathbf{y})d s(\mathbf{y}).
\end{equation}
Moreover,
\begin{align}\label{tildenplayer}
\tilde{\mathbf{K}}_{\partial D}^{\omega,*}[\boldsymbol{\varphi}](\mathbf{x})&=\mathbf{K}_{\partial D}^{*}[\boldsymbol{\varphi}](\mathbf{x})
+\omega^2\ln\omega\rho\tau^2\mathbf{K}_{\partial D,1}^{*}[\boldsymbol{\varphi}](\mathbf{x})+\omega^2\rho\tau^2 \ln(\sqrt{\rho}\tau)\mathbf{K}_{\partial D,1}^{*}[\boldsymbol{\varphi}](\mathbf{x})\nonumber\\
&+\omega^2\rho\tau^2\mathbf{K}_{\partial D,2}^{*}[\boldsymbol{\varphi}](\mathbf{x})+\mathcal{O}(\tau^4\omega^4\ln{\omega}+\tau^4\omega^4).
\end{align}

Let $\mathbf{\Psi}$ be the vector space of all linear solutions to the equation $\mathcal{L}_{\lambda,\mu}\mathbf{u}=0$ satisfying $\partial_{\boldsymbol{\nu}}\mathbf{u}=0$ on $\partial D$, or, equivalently,
\[
\mathbf{\Psi}:=\left\{\boldsymbol{\psi}:\partial_i\psi_j+\partial_j\psi_i=0,\ 1\leq i,j\leq 2\right\},
\]
where $\psi_i\left(1\leq i\leq2\right)$ denote the components of $\boldsymbol{\psi}$. Define a subspace of $L^2(\partial D)^2$ by
$$
L_{\mathbf{\Psi}}^2(\partial D)=\left\{\boldsymbol{f} \in L^2(\partial D)^2: \int_{\partial D} \boldsymbol{f} \cdot \boldsymbol{\psi} d \sigma=0 ~\forall \boldsymbol{\psi} \in \mathbf{\Psi}\right\}.
$$
Notice that the dimension of the space $\mathbf{\Psi}$ is 3, which is spanned by \cite{Kangelasticity}
\begin{equation}\label{xi}
\boldsymbol{\xi}_1=
\left(\begin{array}{l}
1  \\
0
\end{array}\right),\quad\quad
\boldsymbol{\xi}_2=
\left(\begin{array}{l}
0 \\
1
\end{array}\right),\quad\quad
\boldsymbol{\xi}_3=
\left(\begin{array}{l}
x_{2} \\
-x_{1}
\end{array}\right).
\end{equation}
\begin{lem}(see \cite{HAmmariElasticityImaging})\label{Xi}   
The kernel of the operator $-\mathbf{I}_2 / 2+\mathbf{K}_{\partial D}$ coincides with the space $\mathbf{\Psi}$, where
$\mathbf{K}_{\partial D}$ is the adjoint operator of $\mathbf{K}_{\partial D}^{*}$.
\end{lem}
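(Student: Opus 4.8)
The plan is to deduce both inclusions $\mathbf{\Psi}\subseteq\ker(-\mathbf{I}_2/2+\mathbf{K}_{\partial D})$ and $\ker(-\mathbf{I}_2/2+\mathbf{K}_{\partial D})\subseteq\mathbf{\Psi}$ from the mapping properties of the elastic double-layer potential $\mathbf{D}_{\partial D}$ associated with $\boldsymbol{\Gamma}$, whose trace relations read $\mathbf{D}_{\partial D}[\boldsymbol{\varphi}]|_{\pm}=(\mp\tfrac12\mathbf{I}_2+\mathbf{K}_{\partial D})[\boldsymbol{\varphi}]$ on $\partial D$ (with $\mathbf{K}_{\partial D}$ the $L^2$-adjoint of $\mathbf{K}_{\partial D}^{*}$) and whose conormal derivative is continuous across $\partial D$; the other ingredients are Betti's second identity and the standard uniqueness theorems for the static Lam\'e system.

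For the first inclusion I would take $\boldsymbol{\psi}\in\mathbf{\Psi}$ and an arbitrary $\boldsymbol{\varphi}\in L^2(\partial D)^2$, and apply Betti's identity on $D$ to the pair $\mathbf{S}_{\partial D}[\boldsymbol{\varphi}]$, $\boldsymbol{\psi}$. Since $\mathcal{L}_{\lambda,\mu}\mathbf{S}_{\partial D}[\boldsymbol{\varphi}]=0$ and $\mathcal{L}_{\lambda,\mu}\boldsymbol{\psi}=0$ in $D$, and since $\partial_{\boldsymbol{\nu}}\boldsymbol{\psi}=0$ on $\partial D$ (the strain, hence the traction, of a rigid motion vanishes), the identity collapses to $\int_{\partial D}\boldsymbol{\psi}\cdot(-\tfrac12\mathbf{I}_2+\mathbf{K}_{\partial D}^{*})[\boldsymbol{\varphi}]\,ds=0$, using the interior jump formula $\partial_{\boldsymbol{\nu}}\mathbf{S}_{\partial D}[\boldsymbol{\varphi}]|_{-}=(-\tfrac12\mathbf{I}_2+\mathbf{K}_{\partial D}^{*})[\boldsymbol{\varphi}]$. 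As $\boldsymbol{\varphi}$ is arbitrary and $\mathbf{K}_{\partial D}=(\mathbf{K}_{\partial D}^{*})^{*}$, this forces $(-\tfrac12\mathbf{I}_2+\mathbf{K}_{\partial D})[\boldsymbol{\psi}]=0$; equivalently, Green's representation of $\boldsymbol{\psi}$ in $D$ reduces to $\boldsymbol{\psi}=\mathbf{D}_{\partial D}[\boldsymbol{\psi}]$ because the single-layer term carries the vanishing traction, and taking the interior trace gives the same conclusion.

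For the reverse inclusion, I would set $\mathbf{v}:=\mathbf{D}_{\partial D}[\boldsymbol{\varphi}]$ for $\boldsymbol{\varphi}\in\ker(-\mathbf{I}_2/2+\mathbf{K}_{\partial D})$. The jump relation gives $\mathbf{v}|_{+}=(-\tfrac12\mathbf{I}_2+\mathbf{K}_{\partial D})[\boldsymbol{\varphi}]=0$, so $\mathbf{v}$ solves $\mathcal{L}_{\lambda,\mu}\mathbf{v}=0$ in $\mathbb{R}^{2}\setminus\bar{D}$ with vanishing Dirichlet data on $\partial D$; because the double-layer potential decays at infinity, uniqueness for the exterior Dirichlet problem yields $\mathbf{v}\equiv0$ outside $\bar{D}$, hence $\partial_{\boldsymbol{\nu}}\mathbf{v}|_{+}=0$. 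By continuity across $\partial D$ of the conormal derivative of the double-layer potential, $\partial_{\boldsymbol{\nu}}\mathbf{v}|_{-}=0$ as well, so $\mathbf{v}|_{D}$ is a solution of the homogeneous interior Neumann problem for $\mathcal{L}_{\lambda,\mu}$, whose solution space is exactly $\mathbf{\Psi}$. Finally $\boldsymbol{\varphi}=\mathbf{v}|_{-}-\mathbf{v}|_{+}=\mathbf{v}|_{-}$ is the boundary trace of this rigid motion, i.e. $\boldsymbol{\varphi}\in\mathbf{\Psi}$. Combined with $\dim\mathbf{\Psi}=3$ from \eqref{xi}, this identifies the kernel.

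I expect the genuinely delicate points --- which I would cite rather than reprove --- to be the continuity across $\partial D$ of the conormal derivative of $\mathbf{D}_{\partial D}$ (well-posedness of the elastic hypersingular operator) and the identification of the homogeneous interior Neumann solution space with $\mathbf{\Psi}$, which rests on Korn's inequality together with Fredholm theory for the Lam\'e system. The only two-dimensional subtlety is that one must verify $\mathbf{D}_{\partial D}[\boldsymbol{\varphi}]$ decays as $|\mathbf{x}|\to\infty$ so that exterior Dirichlet uniqueness is available despite the logarithmic growth of $\boldsymbol{\Gamma}$; this is immediate since the double-layer kernel is a first-order derivative of $\boldsymbol{\Gamma}$ and hence $O(|\mathbf{x}|^{-1})$ at infinity.
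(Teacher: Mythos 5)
The paper does not prove this lemma; it cites it to \cite{HAmmariElasticityImaging}, so there is no in-text argument to compare yours against. Your proof is correct and is essentially the standard potential-theoretic argument found in that reference: the forward inclusion via the second Betti identity (or, equivalently, Green's representation of a rigid motion $\boldsymbol{\psi}=\mathbf{D}_{\partial D}[\boldsymbol{\psi}]$ since its traction vanishes), and the reverse via $\mathbf{v}=\mathbf{D}_{\partial D}[\boldsymbol{\varphi}]$, exterior Dirichlet uniqueness, continuity of the traction of the double-layer potential, and interior Neumann uniqueness. Your handling of the two-dimensional subtlety is also right: the double-layer kernel is a first derivative of $\boldsymbol{\Gamma}$ and hence $O(|\mathbf{x}|^{-1})$ with gradient $O(|\mathbf{x}|^{-2})$, so the energy flux through $\partial B_R$ tends to zero and the exterior energy argument forces $\mathbf{v}\equiv 0$ outside despite the logarithmic growth of $\boldsymbol{\Gamma}$. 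The only superfluous ingredient is the closing appeal to $\dim\mathbf{\Psi}=3$: the two inclusions already give equality of the sets, so the dimension count is not needed.
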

Denote by $\mathbf{\Psi}^*$
the kernel of the operator $-\mathbf{I}_2 / 2+\mathbf{K}_{\partial D}^*$. The following facts can be found in \cite{Duke1988}.
\begin{lem}
The dimension of the kernel of the operator $-\mathbf{I}_2 / 2+\mathbf{K}_{\partial D}^*$ is 3, i.e. $\text{dim}(\mathbf{\Psi}^*)=3$.
\end{lem}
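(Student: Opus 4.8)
The plan is to obtain the dimension count by duality from the companion statement for $-\mathbf{I}_2/2+\mathbf{K}_{\partial D}$ recorded in Lemma~\ref{Xi}, rather than by exhibiting the three kernel elements directly. The first step is to note that, since $\mathbf{K}_{\partial D}$ is by definition the $L^2(\partial D)^2$-adjoint of $\mathbf{K}_{\partial D}^{*}$, the operator $-\mathbf{I}_2/2+\mathbf{K}_{\partial D}^{*}$ is the adjoint of $-\mathbf{I}_2/2+\mathbf{K}_{\partial D}$ on $L^2(\partial D)^2$; this reduces the claim to showing that these two operators have kernels of equal (finite) dimension.

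The second step is to invoke the solvability theory for elastostatics on Lipschitz domains \cite{Duke1988}: the operators $\pm\mathbf{I}_2/2+\mathbf{K}_{\partial D}$ are bounded Fredholm operators of index zero on $L^2(\partial D)^2$, the Rellich-identity estimates there furnishing the a priori bounds that pin the index. Hence $-\mathbf{I}_2/2+\mathbf{K}_{\partial D}^{*}$ is Fredholm of index zero as well, and for such an operator the kernel and the kernel of its adjoint have the same dimension, so that
$$
\dim\mathbf{\Psi}^{*}=\dim\ker\big(-\mathbf{I}_2/2+\mathbf{K}_{\partial D}^{*}\big)=\dim\ker\big(-\mathbf{I}_2/2+\mathbf{K}_{\partial D}\big).
$$
The third step is purely algebraic: by Lemma~\ref{Xi} this last kernel is $\mathbf{\Psi}$, which is spanned by the two infinitesimal translations $\boldsymbol{\xi}_1,\boldsymbol{\xi}_2$ and the infinitesimal rotation $\boldsymbol{\xi}_3$ in \eqref{xi}, and these are linearly independent as vector fields on $\partial D$. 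Therefore $\dim\mathbf{\Psi}^{*}=3$.

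The main obstacle is the Fredholm/index-zero input. In the $C^{1,\alpha}$ case it is routine, since $\mathbf{K}_{\partial D}$ is then compact and the Fredholm alternative applies verbatim; but for a merely Lipschitz boundary $\mathbf{K}_{\partial D}$ fails to be compact for the Lam\'e system, so one genuinely needs the layer-potential estimates of \cite{Duke1988} to know that $\pm\mathbf{I}_2/2+\mathbf{K}_{\partial D}$ have trivial index. An alternative, more hands-on route would try the identification $\mathbf{\Psi}^{*}=\mathbf{S}_{\partial D}^{-1}\mathbf{\Psi}$ familiar from three dimensions, but this is precisely where the two-dimensional difficulty $(1)$ bites: $\mathbf{S}_{\partial D}$ need not be invertible, and making the comparison precise forces one to introduce the relationship matrix $\mathbf{C}$ between $\mathbf{\Psi}$ and $\mathbf{\Psi}^{*}$ used later in the paper. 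The duality argument sidesteps this and is the cleanest path.
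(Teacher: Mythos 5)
Your argument is correct, but I should point out that the paper does not actually prove this lemma at all---it simply states it and cites \cite{Duke1988} ("The following facts can be found in \cite{Duke1988}"). So there is no in-paper proof for you to match; what you have done is supply a self-contained derivation for a statement the authors leave to a reference, and that derivation is sound.

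Your route---adjoint identification, Fredholm index zero from the Rellich estimates of \cite{Duke1988}, then
\[
\dim\mathbf{\Psi}^{*}=\dim\ker\bigl(-\mathbf{I}_2/2+\mathbf{K}_{\partial D}^{*}\bigr)=\dim\ker\bigl(-\mathbf{I}_2/2+\mathbf{K}_{\partial D}\bigr)=\dim\mathbf{\Psi}=3,
\]
using Lemma~\ref{Xi} for the last two equalities---is exactly the clean way to get the count without exhibiting $\mathbf{\Psi}^{*}$ explicitly. You are also right to flag that the naive identification $\mathbf{\Psi}^{*}=\mathbf{S}_{\partial D}^{-1}\mathbf{\Psi}$, which works in 3D via Calder\'on's symmetrization identity, is unavailable here precisely because $\mathbf{S}_{\partial D}$ need not be invertible in two dimensions; that observation is consistent with the role the relationship matrix $\mathbf{C}$ plays later in the paper (Lemma~\ref{kernelspace}, Proposition~\ref{matrixC}). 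The only thing I would caution is that the index-zero input is genuinely an external fact: for Lipschitz $\partial D$ it is not a consequence of compactness of $\mathbf{K}_{\partial D}$, so the citation to \cite{Duke1988} is load-bearing rather than decorative. With that citation in place, your proof is complete.
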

\begin{lem}(see \cite{ChristianConstanda})\label{kernelspace}
For any closed $C^2$ curve $\partial D$, there is a unique vector field $\mathbf{G}=(\boldsymbol{\zeta}_1,\boldsymbol{\zeta}_2,\boldsymbol{\zeta}_3)$, and a $3\times 3$ constant matrix ${\mathbf{C}}=(C_{ij})_{i,j=1}^{3}$, such that $\boldsymbol{\zeta}_i(1\leq i\leq 3)$ is a basis for the kernel space of $-\mathbf{I}_2 / 2+\mathbf{K}_{\partial D}^{*}$ and
\begin{equation}
\mathbf{S}_{\partial D}[\mathbf{G}]=\mathbf{F}\mathbf{C},\quad \int_{\partial D}\mathbf{F}(\mathbf{y})^T\mathbf{G}(\mathbf{y})d s(\mathbf{y})={\mathbf{I}_3},\nonumber
\end{equation}
where $\mathbf{F}=(\boldsymbol{\xi}_1,\boldsymbol{\xi}_2,\boldsymbol{\xi}_3)$, and $\mathbf{I}_3$ stands for the $3\times3$ identity matrix.
\end{lem}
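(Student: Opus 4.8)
The plan is to take $\mathbf{G}$ to be the basis of $\mathbf{\Psi}^{*}$ that is dual, under the $L^{2}(\partial D)$ pairing, to the basis $\mathbf{F}=(\boldsymbol{\xi}_{1},\boldsymbol{\xi}_{2},\boldsymbol{\xi}_{3})$ of $\mathbf{\Psi}$, and then to read off $\mathbf{C}$ from the action of $\mathbf{S}_{\partial D}$ on $\mathbf{G}$. The two ingredients needed are: (i) $\mathbf{S}_{\partial D}$ maps $\mathbf{\Psi}^{*}$ into $\mathbf{\Psi}$ on $\partial D$; and (ii) the bilinear form $\mathbf{\Psi}\times\mathbf{\Psi}^{*}\ni(\boldsymbol{\psi},\boldsymbol{\varphi})\mapsto\int_{\partial D}\boldsymbol{\psi}\cdot\boldsymbol{\varphi}\,ds$ is non-degenerate.

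For (i), let $\boldsymbol{\varphi}\in\mathbf{\Psi}^{*}=\ker(-\tfrac{1}{2}\mathbf{I}_{2}+\mathbf{K}_{\partial D}^{*})$. The jump relation \eqref{jump formula} at $\omega=0$ gives $\partial_{\boldsymbol{\nu}}\mathbf{S}_{\partial D}[\boldsymbol{\varphi}]|_{-}=0$, and since $\mathbf{S}_{\partial D}[\boldsymbol{\varphi}]$ solves $\mathcal{L}_{\lambda,\mu}\mathbf{u}=0$ in $D$, the elastostatic energy identity forces the symmetric gradient of $\mathbf{S}_{\partial D}[\boldsymbol{\varphi}]$ to vanish in $D$, hence $\mathbf{S}_{\partial D}[\boldsymbol{\varphi}]|_{D}\in\mathbf{\Psi}$; by continuity of the single-layer potential across $\partial D$ this means $\mathbf{S}_{\partial D}[\boldsymbol{\varphi}]|_{\partial D}\in\mathrm{span}(\boldsymbol{\xi}_{1},\boldsymbol{\xi}_{2},\boldsymbol{\xi}_{3})$. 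Hence, once a basis $\mathbf{G}=(\boldsymbol{\zeta}_{1},\boldsymbol{\zeta}_{2},\boldsymbol{\zeta}_{3})$ of $\mathbf{\Psi}^{*}$ is fixed, the linear independence of $\boldsymbol{\xi}_{1},\boldsymbol{\xi}_{2},\boldsymbol{\xi}_{3}$ on $\partial D$ produces a unique $3\times3$ matrix $\mathbf{C}$ with $\mathbf{S}_{\partial D}[\mathbf{G}]=\mathbf{F}\mathbf{C}$; once the normalization below is imposed one can moreover write $C_{ij}=\int_{\partial D}\boldsymbol{\zeta}_{i}\cdot\mathbf{S}_{\partial D}[\boldsymbol{\zeta}_{j}]\,ds$.

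For (ii), I would argue by contradiction: suppose $\boldsymbol{\varphi}_{0}\in\mathbf{\Psi}^{*}\cap L_{\mathbf{\Psi}}^{2}(\partial D)$, i.e. $\boldsymbol{\varphi}_{0}$ is $L^{2}(\partial D)$-orthogonal to $\mathbf{\Psi}$, and show $\boldsymbol{\varphi}_{0}=0$. Orthogonality to $\boldsymbol{\xi}_{1},\boldsymbol{\xi}_{2}$ gives $\int_{\partial D}\boldsymbol{\varphi}_{0}\,ds=0$, so by the form \eqref{gamma} of $\boldsymbol{\Gamma}$ the leading $\ln|\mathbf{x}|$ term of $\mathbf{S}_{\partial D}[\boldsymbol{\varphi}_{0}]$ drops out and $\mathbf{S}_{\partial D}[\boldsymbol{\varphi}_{0}](\mathbf{x})=\mathcal{O}(|\mathbf{x}|^{-1})$, $\nabla\mathbf{S}_{\partial D}[\boldsymbol{\varphi}_{0}](\mathbf{x})=\mathcal{O}(|\mathbf{x}|^{-2})$ as $|\mathbf{x}|\to\infty$. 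By (i), $\mathbf{S}_{\partial D}[\boldsymbol{\varphi}_{0}]|_{\partial D}=\boldsymbol{\psi}$ for some $\boldsymbol{\psi}\in\mathbf{\Psi}$. Applying the first Betti identity for $\mathcal{L}_{\lambda,\mu}$ on $\mathbb{R}^{2}\setminus\bar{D}$, where the boundary contribution at infinity vanishes by the decay just recorded, the elastic energy of $\mathbf{S}_{\partial D}[\boldsymbol{\varphi}_{0}]$ there equals $-\int_{\partial D}\boldsymbol{\psi}\cdot\partial_{\boldsymbol{\nu}}\mathbf{S}_{\partial D}[\boldsymbol{\varphi}_{0}]|_{+}\,ds$; since $\partial_{\boldsymbol{\nu}}\mathbf{S}_{\partial D}[\boldsymbol{\varphi}_{0}]|_{+}-\partial_{\boldsymbol{\nu}}\mathbf{S}_{\partial D}[\boldsymbol{\varphi}_{0}]|_{-}=\boldsymbol{\varphi}_{0}$ and $\partial_{\boldsymbol{\nu}}\mathbf{S}_{\partial D}[\boldsymbol{\varphi}_{0}]|_{-}=0$, this reduces to $-\int_{\partial D}\boldsymbol{\psi}\cdot\boldsymbol{\varphi}_{0}\,ds=0$. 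Thus $\mathbf{S}_{\partial D}[\boldsymbol{\varphi}_{0}]$ is a rigid motion on the connected exterior domain; being $\mathcal{O}(|\mathbf{x}|^{-1})$ at infinity it vanishes there, so $\partial_{\boldsymbol{\nu}}\mathbf{S}_{\partial D}[\boldsymbol{\varphi}_{0}]|_{+}=0$ and hence $\boldsymbol{\varphi}_{0}=\partial_{\boldsymbol{\nu}}\mathbf{S}_{\partial D}[\boldsymbol{\varphi}_{0}]|_{+}-\partial_{\boldsymbol{\nu}}\mathbf{S}_{\partial D}[\boldsymbol{\varphi}_{0}]|_{-}=0$.

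Granting (ii), for any basis $\{\boldsymbol{\varphi}_{j}\}_{j=1}^{3}$ of $\mathbf{\Psi}^{*}$ the matrix $\Lambda=(\Lambda_{ij})$, $\Lambda_{ij}=\int_{\partial D}\boldsymbol{\xi}_{i}\cdot\boldsymbol{\varphi}_{j}\,ds$, is invertible, because a kernel vector of $\Lambda$ would give a nonzero element of $\mathbf{\Psi}^{*}\cap L_{\mathbf{\Psi}}^{2}(\partial D)$. Then $\mathbf{G}:=(\boldsymbol{\varphi}_{1},\boldsymbol{\varphi}_{2},\boldsymbol{\varphi}_{3})\Lambda^{-1}$ is a basis of $\mathbf{\Psi}^{*}$ satisfying $\int_{\partial D}\mathbf{F}^{T}\mathbf{G}\,ds=\mathbf{I}_{3}$; any admissible $\mathbf{G}$ must invert $\Lambda$ in this way, hence is unique, and $\mathbf{C}$ is then the matrix furnished by (i). The step I expect to be the main obstacle is the exterior half of (ii): in two dimensions $\boldsymbol{\Gamma}$ carries the logarithmic factor $\alpha_{1}\ln|\mathbf{x}|\mathbf{I}_{2}$, so one must first exploit $\int_{\partial D}\boldsymbol{\varphi}_{0}\,ds=0$ to recover enough decay of $\mathbf{S}_{\partial D}[\boldsymbol{\varphi}_{0}]$ at infinity before the energy identity and the ``a decaying rigid motion vanishes'' argument can be run --- precisely the point where the planar analysis departs from its three-dimensional counterpart.
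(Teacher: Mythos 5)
The paper does not prove this lemma; it is invoked as a known result from \cite{ChristianConstanda}, so there is no internal proof to compare against. Your argument is, however, a correct and essentially self-contained reconstruction: step (i) — that $\mathbf{S}_{\partial D}$ maps $\mathbf{\Psi}^{*}$ into $\mathbf{\Psi}$ — follows exactly as you say from the interior jump formula $\partial_{\boldsymbol{\nu}}\mathbf{S}_{\partial D}[\boldsymbol{\varphi}]|_{-}=(-\tfrac12\mathbf{I}_2+\mathbf{K}_{\partial D}^{*})[\boldsymbol{\varphi}]=0$ together with the positivity of the Lam\'e energy density under the standing assumptions $\mu>0$, $\lambda+\mu>0$; step (ii) — non-degeneracy of the $L^2(\partial D)$ pairing between $\mathbf{\Psi}$ and $\mathbf{\Psi}^{*}$ — is the genuinely two-dimensional part, and you handle the log-growth obstruction correctly by first using orthogonality to $\boldsymbol{\xi}_1,\boldsymbol{\xi}_2$ to get $\int_{\partial D}\boldsymbol{\varphi}_0\,ds=0$, which upgrades the far-field behaviour to $\mathbf{S}_{\partial D}[\boldsymbol{\varphi}_0]=\mathcal{O}(|\mathbf{x}|^{-1})$ and $\nabla\mathbf{S}_{\partial D}[\boldsymbol{\varphi}_0]=\mathcal{O}(|\mathbf{x}|^{-2})$, killing the $\partial B_R$ contribution in Betti's identity. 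The concluding Liouville step (a rigid motion decaying at infinity vanishes) and the jump identity $\boldsymbol{\varphi}_0=\partial_{\boldsymbol{\nu}}\mathbf{S}_{\partial D}[\boldsymbol{\varphi}_0]|_{+}-\partial_{\boldsymbol{\nu}}\mathbf{S}_{\partial D}[\boldsymbol{\varphi}_0]|_{-}$ then force $\boldsymbol{\varphi}_0=0$, and the Gram-matrix normalisation gives both existence and uniqueness of $\mathbf{G}$ and hence of $\mathbf{C}$. This is in line with Constanda's treatment of first-kind integral equations in plane elastostatics, so your route matches the standard one for this lemma.

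Two very minor points worth being explicit about if you wrote this up. First, the symmetry $C_{ij}=\int_{\partial D}\boldsymbol{\zeta}_i\cdot\mathbf{S}_{\partial D}[\boldsymbol{\zeta}_j]\,ds$ that you mention in passing at the end of (i) is exactly what the paper separately records as a lemma (the symmetry of $\mathbf{C}$), and it uses the self-adjointness of $\mathbf{S}_{\partial D}$ on $L^2(\partial D)^2$; stating that explicitly would tie your proof to that adjacent result. Second, when you run the exterior Betti identity you should say a word about why $\nabla^{s}\mathbf{S}_{\partial D}[\boldsymbol{\varphi}_0]\in L^2(\mathbb{R}^2\setminus\bar D)$ — the decay $\mathcal{O}(|\mathbf{x}|^{-2})$ of the gradient together with $L^2$-regularity near $\partial D$ for $\boldsymbol{\varphi}_0\in L^2(\partial D)^2$ gives this, but it is the hypothesis under which the boundary-term limit as $R\to\infty$ is legitimate.
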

\begin{lem}
The constant matrix ${\mathbf{C}}$ is symmetric.
\begin{proof}
For fixed $i$, $j$, denote $\boldsymbol{\zeta}_i=\left(\boldsymbol{\zeta}_{i1},\boldsymbol{\zeta}_{i2}\right)^T$, $\boldsymbol{\zeta}_j=\left(\boldsymbol{\zeta}_{j1},\boldsymbol{\zeta}_{j2}\right)^T$, and
$$
\boldsymbol{\Gamma}:=
\begin{pmatrix}
\Gamma_{11} &  \Gamma_{12} \\
\Gamma_{12} &  \Gamma_{22} \\
\end{pmatrix}.
$$
Then from Lemma \ref{kernelspace}, combined with integration by parts, we can obtain
\begin{align}
C_{ji}&=\Big(C_{1i}\boldsymbol{\xi}_1+C_{2i}\boldsymbol{\xi}_2+C_{3i}\boldsymbol{\xi}_3,\boldsymbol{\zeta}_j \Big)=\Big(\mathbf{S}_{\partial D}[\boldsymbol{\zeta}_i],\boldsymbol{\zeta}_j \Big)=\int_{\partial D}\mathbf{S}_{\partial D}[\boldsymbol{\zeta}_i]\cdot\boldsymbol{\zeta}_j\nonumber\\
&=\int_{\partial D}\int_{\partial D}\left(\Gamma_{11}\boldsymbol{\zeta}_{i1}+\Gamma_{12}\boldsymbol{\zeta}_{i2}\right)\boldsymbol{\zeta}_{j1}
+\left(\Gamma_{12}\boldsymbol{\zeta}_{i1}+\Gamma_{22}\boldsymbol{\zeta}_{i2}\right)\boldsymbol{\zeta}_{j2}\nonumber\\
&=\int_{\partial D}\int_{\partial D}\left(\Gamma_{11}\boldsymbol{\zeta}_{j1}+\Gamma_{12}\boldsymbol{\zeta}_{j2}\right)\boldsymbol{\zeta}_{i1}
+\left(\Gamma_{12}\boldsymbol{\zeta}_{j1}+\Gamma_{22}\boldsymbol{\zeta}_{j2}\right)\boldsymbol{\zeta}_{i2}=\Big(\mathbf{S}_{\partial D}[\boldsymbol{\zeta}_j],\boldsymbol{\zeta}_i \Big)=C_{ij}.\nonumber
\end{align}
This completes the proof.
\end{proof}
\end{lem}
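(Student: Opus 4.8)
The plan is to read the entries of $\mathbf{C}$ off the two defining identities in Lemma~\ref{kernelspace} and then to invoke the self-adjointness of the static single-layer potential $\mathbf{S}_{\partial D}$ on $L^{2}(\partial D)^{2}$. The key preliminary observation is that the Kelvin matrix $\boldsymbol{\Gamma}$ in \eqref{gamma} is a \emph{symmetric} $2\times 2$ matrix and an \emph{even} function of its argument, i.e. $\boldsymbol{\Gamma}(\mathbf{x}-\mathbf{y})^{T}=\boldsymbol{\Gamma}(\mathbf{y}-\mathbf{x})=\boldsymbol{\Gamma}(\mathbf{x}-\mathbf{y})$; hence, by Fubini's theorem, for all $\boldsymbol{\varphi},\boldsymbol{\psi}\in L^{2}(\partial D)^{2}$ one has the reciprocity identity $\int_{\partial D}\mathbf{S}_{\partial D}[\boldsymbol{\varphi}]\cdot\boldsymbol{\psi}\,ds=\int_{\partial D}\mathbf{S}_{\partial D}[\boldsymbol{\psi}]\cdot\boldsymbol{\varphi}\,ds$.

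Next I would make the bookkeeping explicit. Writing $\mathbf{G}=(\boldsymbol{\zeta}_1,\boldsymbol{\zeta}_2,\boldsymbol{\zeta}_3)$ and $\mathbf{F}=(\boldsymbol{\xi}_1,\boldsymbol{\xi}_2,\boldsymbol{\xi}_3)$ column-wise, the relation $\mathbf{S}_{\partial D}[\mathbf{G}]=\mathbf{F}\mathbf{C}$ gives $\mathbf{S}_{\partial D}[\boldsymbol{\zeta}_i]=\sum_{k=1}^{3}C_{ki}\,\boldsymbol{\xi}_k$, while the normalization $\int_{\partial D}\mathbf{F}^{T}\mathbf{G}\,ds=\mathbf{I}_3$ says precisely that $\int_{\partial D}\boldsymbol{\xi}_k\cdot\boldsymbol{\zeta}_j\,ds=\delta_{kj}$. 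Pairing the first relation with $\boldsymbol{\zeta}_j$ and using the second then yields $\int_{\partial D}\mathbf{S}_{\partial D}[\boldsymbol{\zeta}_i]\cdot\boldsymbol{\zeta}_j\,ds=\sum_{k}C_{ki}\,\delta_{kj}=C_{ji}$, and, interchanging the roles of $i$ and $j$, $\int_{\partial D}\mathbf{S}_{\partial D}[\boldsymbol{\zeta}_j]\cdot\boldsymbol{\zeta}_i\,ds=C_{ij}$.

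Finally, combining these two expressions with the reciprocity identity from the first step gives $C_{ji}=\int_{\partial D}\mathbf{S}_{\partial D}[\boldsymbol{\zeta}_i]\cdot\boldsymbol{\zeta}_j\,ds=\int_{\partial D}\mathbf{S}_{\partial D}[\boldsymbol{\zeta}_j]\cdot\boldsymbol{\zeta}_i\,ds=C_{ij}$, which is the assertion. The only point that needs a little care — and which I would spell out using the block form $\boldsymbol{\Gamma}=\begin{pmatrix}\Gamma_{11}&\Gamma_{12}\\ \Gamma_{12}&\Gamma_{22}\end{pmatrix}$ together with a change of order of integration — is checking the reciprocity of $\mathbf{S}_{\partial D}$ directly on the scalar components, so that the pairing of the $\Gamma_{k\ell}$ against the components $\boldsymbol{\zeta}_{i1},\boldsymbol{\zeta}_{i2}$ is manifestly symmetric under $i\leftrightarrow j$. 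There is no genuine obstacle here: since the static Kelvin matrix is symmetric and even, the lemma is essentially a consequence of the self-adjointness of $\mathbf{S}_{\partial D}$ on $L^{2}(\partial D)^{2}$ and the biorthogonality built into $\mathbf{G}$ and $\mathbf{F}$.
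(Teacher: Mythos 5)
Your proposal is correct and follows essentially the same route as the paper: both read $C_{ji}=\int_{\partial D}\mathbf{S}_{\partial D}[\boldsymbol{\zeta}_i]\cdot\boldsymbol{\zeta}_j\,ds$ off the two identities in Lemma~\ref{kernelspace} and then invoke the self-adjointness of $\mathbf{S}_{\partial D}$ coming from the symmetry (and evenness) of the Kelvin kernel $\boldsymbol{\Gamma}$. Your version is slightly more explicit in isolating the Fubini/evenness step behind the reciprocity $\int\mathbf{S}_{\partial D}[\boldsymbol{\varphi}]\cdot\boldsymbol{\psi}=\int\mathbf{S}_{\partial D}[\boldsymbol{\psi}]\cdot\boldsymbol{\varphi}$, which the paper's component-wise calculation leaves implicit, but the mathematical content is identical.
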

The corresponding space for the static Lam\'e system with Neumann boundary conditions is three-dimensional, which affects the invertibility of $\hat{\mathbf{S}}_{\partial D}^{\omega}$. Note that the  invertibility of operator ${\mathbf{S}}_{\partial D}$ is equivalent to that of $\mathbf{C}$ in Lemma \ref{kernelspace}. In fact, if $\mathbf{C}$ is singular, then there exists a non-zero vector $\mathbf{z}$ such that $\mathbf{C}\mathbf{z}=0$. Thus we have
$$
\mathbf{S}_{\partial D}[\mathbf{G}\mathbf{z}]=\mathbf{S}_{\partial D}[\mathbf{G}]\mathbf{z}=\mathbf{F}(\mathbf{C}\mathbf{z})=0.
$$
Namely, the operator ${\mathbf{S}}_{\partial D}$ is not invertible. Based on this idea, we can derive the following results.

\begin{prop}\label{matrixC}
For $\omega\ll1$, if the operator $\hat{\mathbf{S}}_{\partial D}^{\omega}$ is not invertible in $\mathcal{L}(L^{2}\left(\partial D)^{2},H^{1}(\partial D)^{2}\right)$, then the matrix $\mathbf{C}$ satisfies the following conditions
$$
\mathbf{C}=
\begin{pmatrix}
C_{11} &  C_{12} & 0\\
C_{12} &  C_{22} & 0\\
0 &  0 & 0
\end{pmatrix}.
$$
\end{prop}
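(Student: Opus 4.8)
The plan is to argue by contraposition, following the template already set up in the paragraph preceding the statement: the invertibility of $\hat{\mathbf{S}}_{\partial D}^{\omega}$ can be detected by testing against the three-dimensional kernel data $(\mathbf{F},\mathbf{G},\mathbf{C})$ furnished by Lemma \ref{kernelspace}. Concretely, suppose $\hat{\mathbf{S}}_{\partial D}^{\omega}$ fails to be invertible in $\mathcal{L}(L^2(\partial D)^2,H^1(\partial D)^2)$ for some small $\omega$. Since $\hat{\mathbf{S}}_{\partial D}^{\omega}=\mathbf{S}_{\partial D}+\gamma_\omega\,\langle\,\cdot\,,\mathbf 1\rangle$ is a finite-rank perturbation of the Fredholm-index-zero operator $\mathbf{S}_{\partial D}$, it is itself Fredholm of index zero, so non-invertibility forces a nontrivial kernel: there is $\boldsymbol{\varphi}\neq 0$ with $\mathbf{S}_{\partial D}[\boldsymbol{\varphi}]+\gamma_\omega\!\int_{\partial D}\boldsymbol{\varphi}=0$ on $\partial D$.

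First I would decompose $\boldsymbol{\varphi}$ relative to the kernel $\mathbf{\Psi}^*=\operatorname{span}\{\boldsymbol{\zeta}_1,\boldsymbol{\zeta}_2,\boldsymbol{\zeta}_3\}$ of $-\mathbf{I}_2/2+\mathbf{K}_{\partial D}^*$ and its $L^2$-orthogonal complement, writing $\boldsymbol{\varphi}=\mathbf{G}\mathbf a+\boldsymbol{\varphi}_0$ with $\mathbf a\in\mathbb{C}^3$ and $\boldsymbol{\varphi}_0\perp\mathbf{\Psi}^*$; here I use that $\mathbf{S}_{\partial D}$ is injective on the complement of $\mathbf{\Psi}^*$ (equivalently, $\mathbf{S}_{\partial D}[\boldsymbol{\varphi}_0]$ is well controlled and $\mathbf{S}_{\partial D}$ restricted there is invertible onto its range). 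Applying $\mathbf{S}_{\partial D}$ and using $\mathbf{S}_{\partial D}[\mathbf{G}]=\mathbf{F}\mathbf{C}$ from Lemma \ref{kernelspace} gives $\mathbf{F}(\mathbf{C}\mathbf a)+\mathbf{S}_{\partial D}[\boldsymbol{\varphi}_0]+\gamma_\omega(\int_{\partial D}\boldsymbol{\varphi})=0$. The key structural observation is that $\gamma_\omega=\ln(\sqrt\rho\,\omega)\alpha_1+\alpha$ blows up like $\ln\omega$ as $\omega\to0$, whereas $\mathbf{S}_{\partial D}[\boldsymbol{\varphi}_0]$ and $\mathbf{F}(\mathbf{C}\mathbf a)$ are $\omega$-independent; so for the identity to hold for a fixed small $\omega$ one needs to examine which components of the constant term $\int_{\partial D}\boldsymbol{\varphi}$ can be absorbed. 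I would pair the identity with the $\boldsymbol{\zeta}_j$'s: using the biorthogonality $\int_{\partial D}\mathbf{F}^T\mathbf{G}=\mathbf{I}_3$, the fact that $\int_{\partial D}\boldsymbol{\zeta}_j\cdot\mathbf{S}_{\partial D}[\boldsymbol{\varphi}_0]=\int_{\partial D}\mathbf{S}_{\partial D}[\boldsymbol{\zeta}_j]\cdot\boldsymbol{\varphi}_0=(\mathbf{F}\mathbf{C})_j^T\boldsymbol{\varphi}_0=0$ by self-adjointness of $\mathbf{S}_{\partial D}$ and $\boldsymbol{\varphi}_0\perp\mathbf{\Psi}^*$, and the identity $\int_{\partial D}\boldsymbol{\varphi}=\sum_k a_k\int_{\partial D}\boldsymbol{\zeta}_k$ (since $\boldsymbol{\varphi}_0$ being orthogonal to the constants $\boldsymbol{\xi}_1,\boldsymbol{\xi}_2\in\mathbf{\Psi}^*\cap\mathbf{\Psi}$ kills its mean), one reduces to a purely $3\times3$ linear relation in $\mathbf a$. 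Writing $\mathbf b_j:=\int_{\partial D}\boldsymbol{\zeta}_j\in\mathbb{R}^2$ embedded into the $\boldsymbol{\xi}$-basis, this relation reads $\mathbf{C}^T\mathbf a+\gamma_\omega(B\mathbf a)=0$ for the appropriate constant $3\times3$ matrix $B$ recording the $\boldsymbol{\xi}$-coordinates of the means $\mathbf b_j$; because $\boldsymbol{\xi}_3$ has zero mean over any closed curve that — wait, that is not automatic — the relevant point is that only the $\boldsymbol{\xi}_1,\boldsymbol{\xi}_2$ (translation) directions of $\int_{\partial D}\boldsymbol{\varphi}$ appear in $\gamma_\omega\int_{\partial D}\boldsymbol{\varphi}$, so $B$ has rank at most $2$ and its range lies in $\operatorname{span}\{\boldsymbol{\xi}_1,\boldsymbol{\xi}_2\}$.

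Now I would let $\omega\to0$ in the relation $\mathbf{C}\mathbf a=-\gamma_\omega B\mathbf a$ (using $\mathbf{C}$ symmetric): since the left side is fixed and $|\gamma_\omega|\to\infty$, one must have $B\mathbf a=0$, and then $\mathbf{C}\mathbf a=0$. Feeding $B\mathbf a=0$ back in: $B\mathbf a=0$ says exactly that the first two entries of $\mathbf a$ vanish when expressed in the right basis — more precisely, that $\mathbf a$ lies in the $\boldsymbol{\xi}_3$-direction after the coordinate identification — so the kernel vector $\mathbf a$ is a multiple of $e_3$, and $\mathbf{C}\mathbf a=0$ then forces the third column (hence by symmetry the third row) of $\mathbf{C}$ to vanish, i.e. $C_{13}=C_{23}=C_{33}=0$, which is precisely the asserted block form. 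The main obstacle, and the part needing genuine care rather than bookkeeping, is the second-to-last step: establishing cleanly that the finite-rank term $\gamma_\omega\int_{\partial D}\boldsymbol{\varphi}$ only couples to the two translational directions and that the mean of $\boldsymbol{\varphi}_0$ genuinely drops out, so that the whole obstruction collapses to the $3\times3$ system above; this is where the special structure of $\hat{\mathbf{S}}_{\partial D}^{\omega}$ (rank-one perturbation valued in constants, not a general compact perturbation) is essential, and one must be careful that the limit $\omega\to0$ is taken only after reducing to the finite-dimensional relation, since $\hat{\mathbf{S}}_{\partial D}^{\omega}$ itself is not analytic at $\omega=0$. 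I expect the dimension count $\dim\mathbf{\Psi}=\dim\mathbf{\Psi}^*=3$ together with Lemma \ref{kernelspace} and the symmetry of $\mathbf{C}$ to make everything else routine.
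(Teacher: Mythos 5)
Your overall strategy is the right one and lands on the same $3\times 3$ system as the paper, but there are two genuine gaps in how you get there.

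The first is a swap of $\mathbf{\Psi}$ and $\mathbf{\Psi}^*$ in the step that eliminates $\boldsymbol{\varphi}_0$. You decompose $\boldsymbol{\varphi}=\mathbf{G}\mathbf{a}+\boldsymbol{\varphi}_0$ with $\boldsymbol{\varphi}_0\perp\mathbf{\Psi}^*=\operatorname{span}\{\boldsymbol{\zeta}_j\}$, and then claim $\int_{\partial D}\boldsymbol{\zeta}_j\cdot\mathbf{S}_{\partial D}[\boldsymbol{\varphi}_0]=(\mathbf{F}\mathbf{C})_j^T\boldsymbol{\varphi}_0=0$. But $(\mathbf{F}\mathbf{C})_j=\sum_k C_{kj}\boldsymbol{\xi}_k$ lies in $\mathbf{\Psi}$, not $\mathbf{\Psi}^*$, so this pairing vanishes only if $\boldsymbol{\varphi}_0\perp\mathbf{\Psi}$, which is not what you assumed; the two subspaces are distinct in general. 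The remark ``$\boldsymbol{\xi}_1,\boldsymbol{\xi}_2\in\mathbf{\Psi}^*\cap\mathbf{\Psi}$'' used to kill the mean of $\boldsymbol{\varphi}_0$ has the same problem: $\boldsymbol{\xi}_1,\boldsymbol{\xi}_2\in\mathbf{\Psi}$, not $\mathbf{\Psi}^*$. Both are repairable if you instead decompose along $\mathbf{\Psi}^*\oplus L^2_{\mathbf{\Psi}}(\partial D)$ (a direct sum, by the biorthogonality $\int_{\partial D}\mathbf{F}^T\mathbf{G}=\mathbf{I}_3$), but as written the projection argument does not close. More to the point, even after this fix, pairing the kernel identity with $\boldsymbol{\zeta}_j$ only yields three scalar equations; it does \emph{not} by itself show that the residual $\boldsymbol{\varphi}_0\ne 0$ cannot carry the entire nontrivial kernel with $\mathbf{a}=0$. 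What is actually needed --- and what the paper uses implicitly, via the discussion of $\ker\mathbf{S}_{\partial D}$ in the paragraph preceding the proposition --- is the fact that $\ker\hat{\mathbf{S}}_{\partial D}^{\omega}\subset\mathbf{\Psi}^*$ (interior Dirichlet uniqueness), so that non-invertibility is \emph{equivalent} to singularity of the $3\times3$ matrix.

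The second gap is in the limit argument: you treat $\mathbf{a}$ as fixed while letting $\omega\to 0$, but the kernel vector $\mathbf{a}=\mathbf{a}(\omega)$ depends on $\omega$, so one cannot conclude $B\mathbf{a}=0$ and $\mathbf{C}\mathbf{a}=0$ simultaneously without a subsequence-and-normalization argument and a more careful bookkeeping of which products $\gamma_\omega\cdot(\text{small})$ stay bounded. This is where the paper's route is cleaner: rather than chasing a kernel vector, it computes $\hat{\mathbf{S}}_{\partial D}^{\omega}[\mathbf{G}]=\mathbf{F}\mathscr{C}$ with $\mathscr{C}=\mathbf{C}+\gamma_\omega\operatorname{diag}(1,1,0)$ directly, observes that non-invertibility forces $\det\mathscr{C}=0$, and expands $\det\mathscr{C}$ as the quadratic $C_{33}\gamma_\omega^2+\bigl((C_{11}+C_{22})C_{33}-C_{13}C_{31}-C_{23}C_{32}\bigr)\gamma_\omega+\det\mathbf{C}$ in $\gamma_\omega$ with $\omega$-independent coefficients. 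Since $|\gamma_\omega|\to\infty$, a nonzero leading coefficient would make $\det\mathscr{C}\ne 0$ for all small $\omega$, whence $C_{33}=0$; then the same reasoning on the linear coefficient (using symmetry of $\mathbf{C}$) gives $C_{13}^2+C_{23}^2=0$, so $C_{13}=C_{23}=0$. This determinant-polynomial reduction bypasses the $\boldsymbol{\varphi}_0$ bookkeeping and the $\omega$-dependence of $\mathbf{a}$ altogether, and is the key idea your proposal is missing.
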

\begin{proof}
For any function $\boldsymbol{\varphi} \in L^{2}(\partial D)^{2}$, note that
$$
\gamma_{\omega}\int_{\partial D}\boldsymbol{\varphi}=\gamma_{\omega}\boldsymbol{\xi}_1\int_{\partial D}\boldsymbol{\xi}_1\cdot\boldsymbol{\varphi}
+\gamma_{\omega}\boldsymbol{\xi}_2\int_{\partial D}\boldsymbol{\xi}_2\cdot\boldsymbol{\varphi},
$$
which further yields that
$$
\gamma_{\omega}\int_{\partial D}\boldsymbol{\zeta}_1=\gamma_{\omega}\boldsymbol{\xi}_1, \quad\quad \gamma_{\omega}\int_{\partial D}\boldsymbol{\zeta}_2=\gamma_{\omega}\boldsymbol{\xi}_2,\quad \quad\gamma_{\omega}\int_{\partial D}\boldsymbol{\zeta}_3=0.
$$
Then, by straightforward computations, one can see that
$$
\hat{\mathbf{S}}_{\partial D}^{\omega}[\mathbf{G}]=\mathbf{S}_{\partial D}[\mathbf{G}](\mathbf{x})+\gamma_{\omega}\int_{\partial D}\mathbf{G}(\mathbf{y})d s(\mathbf{y}):=\mathbf{F}\mathscr{C},
$$
where
$$
\mathscr{C}=
\begin{pmatrix}
C_{11}+\gamma_{\omega} &  C_{12} & C_{13}\\
C_{21} &  C_{22}+\gamma_{\omega} & C_{23}\\
C_{31} &  C_{32} & C_{33}
\end{pmatrix}.
$$
Since $\hat{\mathbf{S}}_{\partial D}^{\omega}$ is not invertible, then
\begin{align}
\det\mathscr{C}&=C_{33}\gamma_{\omega}^2+\Big((C_{11}+C_{22})C_{33}-C_{13}C_{31}-C_{23}C_{32}\Big)\gamma_{\omega}+C_{11}C_{22}C_{33}+C_{12}C_{23}C_{31}\nonumber\\
&+C_{21}C_{32}C_{13}-C_{22}C_{13}C_{31}-C_{11}C_{23}C_{32}-C_{12}C_{21}C_{33}=0.\nonumber
\end{align}
By the symmetry argument for $\mathbf{C}$, summing relations $\gamma_{\omega}\rightarrow\infty$ as $\omega\rightarrow0$, Therefore we have
$$
C_{13}=C_{31}=C_{23}=C_{32}=C_{33}=0.
$$
The proof is then completed.
\end{proof}
We have proved the following Theorem, which is the main result of this paper.
\begin{thm}
For $\omega\ll1$, the operator $\hat{\mathbf{S}}_{\partial D}^{\omega}$ is invertible in $\mathcal{L}(L^{2}\left(\partial D)^{2},H^{1}(\partial D)^{2}\right)$.
\end{thm}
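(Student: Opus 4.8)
The plan is to use Proposition \ref{matrixC} to reduce the invertibility of $\hat{\mathbf{S}}_{\partial D}^{\omega}$ (for $\omega\ll1$) to the exclusion of the degenerate block form of the matrix $\mathbf{C}$ of Lemma \ref{kernelspace}, and then to rule that form out by a uniqueness argument for the static single-layer potential. By the contrapositive of Proposition \ref{matrixC}, $\hat{\mathbf{S}}_{\partial D}^{\omega}$ is invertible for all sufficiently small $\omega$ as soon as $\mathbf{C}$ is not of the displayed form. Now, if $\mathbf{C}$ were of that form, then its third column would vanish, $C_{13}=C_{23}=C_{33}=0$; writing $\mathbf{G}=(\boldsymbol{\zeta}_1,\boldsymbol{\zeta}_2,\boldsymbol{\zeta}_3)$ and using $\mathbf{S}_{\partial D}[\mathbf{G}]=\mathbf{F}\mathbf{C}$ from Lemma \ref{kernelspace}, this would force $\mathbf{S}_{\partial D}[\boldsymbol{\zeta}_3]=C_{13}\boldsymbol{\xi}_1+C_{23}\boldsymbol{\xi}_2+C_{33}\boldsymbol{\xi}_3=\mathbf{0}$ on $\partial D$. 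Since the functions $\boldsymbol{\xi}_i$ in \eqref{xi} are linearly independent on the curve $\partial D$, it therefore suffices to prove that $\mathbf{S}_{\partial D}[\boldsymbol{\zeta}_3]\not\equiv\mathbf{0}$.

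For this I would first record two facts about $\boldsymbol{\zeta}_3$. The normalization $\int_{\partial D}\mathbf{F}^{T}\mathbf{G}\,ds=\mathbf{I}_3$ in Lemma \ref{kernelspace} gives $\int_{\partial D}\boldsymbol{\zeta}_3\,ds=\mathbf{0}$ (its two components are $\int_{\partial D}\boldsymbol{\xi}_1\cdot\boldsymbol{\zeta}_3$ and $\int_{\partial D}\boldsymbol{\xi}_2\cdot\boldsymbol{\zeta}_3$, which both vanish), while $\boldsymbol{\zeta}_3\not\equiv\mathbf{0}$ because $\int_{\partial D}\boldsymbol{\xi}_3\cdot\boldsymbol{\zeta}_3=1$. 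Now suppose, towards a contradiction, that $\mathbf{w}:=\mathbf{S}_{\partial D}[\boldsymbol{\zeta}_3]$ vanishes on $\partial D$. In $D$, $\mathbf{w}$ solves $\mathcal{L}_{\lambda,\mu}\mathbf{w}=0$ with zero Dirichlet data, so $\mathbf{w}\equiv\mathbf{0}$ in $D$ by uniqueness for the interior Lam\'e Dirichlet problem. In $\mathbb{R}^2\setminus\overline{D}$, $\mathbf{w}$ solves $\mathcal{L}_{\lambda,\mu}\mathbf{w}=0$ with zero data on $\partial D$; moreover, because $\int_{\partial D}\boldsymbol{\zeta}_3\,ds=\mathbf{0}$ the leading logarithmic term of $\mathbf{w}$ at infinity cancels, so that $\mathbf{w}(\mathbf{x})=\mathcal{O}(|\mathbf{x}|^{-1})$ and $\nabla\mathbf{w}(\mathbf{x})=\mathcal{O}(|\mathbf{x}|^{-2})$ as $|\mathbf{x}|\to\infty$. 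Integrating the elastic energy identity over $B_R\setminus\overline{D}$ and letting $R\to\infty$ (the boundary contribution on $\partial D$ vanishes since $\mathbf{w}=\mathbf{0}$ there, and the one on $\partial B_R$ is $\mathcal{O}(R^{-2})$), the strain of $\mathbf{w}$ must vanish throughout $\mathbb{R}^2\setminus\overline{D}$, so $\mathbf{w}$ is a rigid motion there and hence $\mathbf{w}\equiv\mathbf{0}$ by its decay. The jump relation \eqref{jump formula} then yields $\boldsymbol{\zeta}_3=\partial_{\boldsymbol{\nu}}\mathbf{w}|_{+}-\partial_{\boldsymbol{\nu}}\mathbf{w}|_{-}=\mathbf{0}$, contradicting $\boldsymbol{\zeta}_3\not\equiv\mathbf{0}$. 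Therefore $\mathbf{S}_{\partial D}[\boldsymbol{\zeta}_3]\not\equiv\mathbf{0}$, the matrix $\mathbf{C}$ is not of the degenerate form, and the assertion follows.

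The step I expect to be the main obstacle is the exterior one. The mean-zero condition $\int_{\partial D}\boldsymbol{\zeta}_3\,ds=\mathbf{0}$ must be used in an essential way to produce decay of $\mathbf{w}$ at infinity: without it, the exterior Lam\'e problem with zero Dirichlet data genuinely admits nontrivial logarithmically growing solutions --- exactly the phenomenon that can make $\mathbf{S}_{\partial D}$ itself non-invertible in two dimensions --- and no contradiction would arise. It is precisely because the rotational generator $\boldsymbol{\xi}_3$ (equivalently $\boldsymbol{\zeta}_3$) is mean-zero and survives through the $\gamma_{\omega}$-correction that $\hat{\mathbf{S}}_{\partial D}^{\omega}$ recovers invertibility. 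Once the correct decay rates are established, the energy (Korn-type) argument closes the gap, and the remaining steps --- Lemma \ref{kernelspace}, the symmetry of $\mathbf{C}$, and Proposition \ref{matrixC} --- are routine bookkeeping.
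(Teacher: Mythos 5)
Your proof is correct but settles the decisive step by a different route from the paper. Both proofs invoke Proposition \ref{matrixC} and then show that $\mathbf{C}$ cannot take the degenerate form, but where the paper identifies $C_{33}=\big(\mathbf{S}_{\partial D}[\boldsymbol{\zeta}_3],\boldsymbol{\zeta}_3\big)$, observes $\int_{\partial D}\boldsymbol{\zeta}_3=0$, and asserts $C_{33}<0$ by citing a positivity result (Proposition 1 of \cite{2DelasticInvert}) for the static single-layer quadratic form on mean-zero densities, you prove the weaker (but sufficient) statement $\mathbf{S}_{\partial D}[\boldsymbol{\zeta}_3]\not\equiv\mathbf{0}$ directly via interior Dirichlet uniqueness, mean-zero decay at infinity, an exterior energy argument, and the jump relation \eqref{jump formula}, and then use linear independence of the $\boldsymbol{\xi}_i$ to rule out the vanishing of the third column of $\mathbf{C}$. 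Your route is self-contained and essentially unpacks the same analytical content that underlies the cited positivity result, which is worth having explicit given the two-dimensional subtlety you correctly flag. One minor polish: for the exterior decay it is not only the logarithmic leading term of $\boldsymbol{\Gamma}$ that cancels under $\int_{\partial D}\boldsymbol{\zeta}_3=0$ but also the $\mathcal{O}(1)$ term $-\alpha_2\,\mathbf{x}\mathbf{x}^{T}/|\mathbf{x}|^{2}$; both cancellations are needed to obtain $\mathbf{w}(\mathbf{x})=\mathcal{O}(|\mathbf{x}|^{-1})$, and your stated decay rates and the ensuing energy argument are correct as written.
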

\begin{proof}
In view of $\int_{\partial D}\boldsymbol{\zeta}_3=0$ for $\boldsymbol{\zeta}_3\in L^{2}\left(\partial D\right)^{2}\neq0$, we can identify
\begin{equation}\label{c33}
C_{33}=\Big(\mathbf{S}_{\partial D}[\boldsymbol{\zeta}_3],\boldsymbol{\zeta}_3 \Big)<0.
\end{equation}
In fact, let the Poisson ratio $\nu=\frac{\lambda}{2(\lambda+\mu)}$, the fundamental solution $U$ of the Navier equation satisfies $U(\mathbf{x})=-\boldsymbol{\Gamma}(\mathbf{x})$. So, we can obtain \eqref{c33} by combining the embedding theorem with Proposition 1 from \cite{2DelasticInvert}. Finally, the invertibility of $\hat{\mathbf{S}}_{\partial D}^{\omega}$ can be directly proven by
Proposition \ref{matrixC}.
\end{proof}

\section{The sub-wavelength resonances}
\subsection{Single resonator}In this subsection, we first consider the case of a single resonator~(see Figure \ref{fig:single} for a schematic illustration). Let $\mathbf{u}^{i}$ be a time-harmonic incident elastic wave satisfying 
\begin{equation}\label{Lame operator}
\mathcal{L}_{\lambda, \mu} \mathbf{u}^{i}(\mathbf{x})+\omega^{2} \rho \mathbf{u}^{i}(\mathbf{x})=0,
\end{equation}
where $\omega>0$ is the angular frequency, and the Lam\'e operator $\mathcal{L}_{\lambda, \mu}$ associated with the parameters $(\lambda, \mu)$ is defined by
\begin{equation}\label{Lame definition}
\mathcal{L}_{\lambda, \mu} \mathbf{u}^{i}:=\mu \triangle \mathbf{u}^{i}+(\lambda+\mu) \nabla \nabla \cdot \mathbf{u}^{i} .
\end{equation}
Then the total displacement field $\mathbf{u}$ described above is controlled by the following system
\begin{equation}\label{Lame system1}
\begin{cases}
\mathcal{L}_{\tilde{\lambda}, \tilde{\mu}} \mathbf{u}(\mathbf{x})+\omega^{2} \tilde{\rho} \mathbf{u}(\mathbf{x})=0, & \mathbf{x} \in D , \\ \mathcal{L}_{\lambda, \mu} \mathbf{u}(\mathbf{x})+\omega^{2} \rho \mathbf{u}(\mathbf{x})=0, & \mathbf{x} \in \mathbb{R}^{2} \backslash \bar{D}, \\ \left.\mathbf{u}(\mathbf{x})\right|_{-}=\left.\mathbf{u}(\mathbf{x})\right|_{+}, & \mathbf{x} \in \partial D, \\ \left.\partial_{\tilde{\boldsymbol{\nu}}} \mathbf{u}(\mathbf{x})\right|_{-}=\left.\partial_{\boldsymbol{\nu}} \mathbf{u}(\mathbf{x})\right|_{+}, & \mathbf{x} \in \partial D ,\\
\mathbf{u}^{s}:=\mathbf{u}-\mathbf{u}^{i} \quad \text { satisfies the radiation condition}, &
\end{cases}
\end{equation}
where the subscript $\pm$ indicate the limits from outside and inside of $D$, respectively. In \eqref{Lame system1}, the co-normal derivative $\partial_{\boldsymbol{\nu}}$ associated with the parameters $(\lambda, \mu)$ is defined by
\begin{equation}\label{co-normal derivative}
\partial_{\boldsymbol{\nu}} \mathbf{u}=\lambda(\nabla \cdot \mathbf{u}) \boldsymbol{\nu}+2 \mu\left(\nabla^{s} \mathbf{u}\right) \boldsymbol{\nu},
\end{equation}
where $\boldsymbol{\nu}$ represents the outward unit normal to $\partial D$ and the operator $\nabla^{s}$ is the symmetric gradient
\begin{equation}
\nabla^{s} \mathbf{u}:=\frac{1}{2}\left(\nabla \mathbf{u}+\nabla \mathbf{u}^{T}\right),
\end{equation}
with $\nabla \mathbf{u}$ denoting the matrix $\left(\partial_{j} u_{i}\right)_{i, j=1}^{2}$ and the superscript $t$ signifying the matrix transpose. The operators $\mathcal{L}_{\tilde{\lambda}, \tilde{\mu}}$ and $\partial_{\tilde{\boldsymbol{\nu}}}$ are defined in \eqref{Lame definition} and \eqref{co-normal derivative}, respectively, with $(\lambda, \mu)$ replaced by $(\tilde{\lambda}, \tilde{\mu})$. In \eqref{Lame system1}, the radiation condition designates the following facts as $|\mathbf{x}| \rightarrow+\infty$~(see\cite{HAmmariElasticityImaging}),
\begin{equation}
\begin{aligned}
\left(\nabla \times \nabla \times \mathbf{u}^{s}\right)(\mathbf{x}) \times \frac{\mathbf{x}}{|\mathbf{x}|}-\mathrm{i} k_{s} \nabla \times \mathbf{u}^{s}(\mathbf{x}) & =\mathcal{O}\left(|\mathbf{x}|^{-\frac{3}{2}}\right), \\
\frac{\mathbf{x}}{|\mathbf{x}|} \cdot\left[\nabla\left(\nabla \cdot \mathbf{u}^{s}\right)\right](\mathbf{x})-\mathrm{i} k_{p} \nabla \mathbf{u}^{s}(\mathbf{x}) & =\mathcal{O}\left(|\mathbf{x}|^{-\frac{3}{2}}\right).
\end{aligned}
\end{equation}
We may assume that the size of the domain $D$ is of order 1 and $k_{s}=o(1), k_{p}=o(1)$. In what follows, the parameters $\tilde{k}_{s}, \tilde{k}_{p}, \tilde{c}_{s}, \tilde{c}_{p}$ are defined in \eqref{kskp} and \eqref{cscp} by replacing $(\lambda, \mu, \rho)$ with $(\tilde{\lambda}, \tilde{\mu}, \tilde{\rho})$. From the relationship \eqref{tau2}, we further have that
\begin{equation}
\tilde{k}_{s}=\tau k_{s}=o(1), \quad \tilde{k}_{p}=\tau k_{p}=o(1) .
\end{equation}

With the help of the potential theory presented above, the solution to the system \eqref{Lame system1} can be written as
\begin{equation}\label{system solution1}
\mathbf{u}=
\begin{cases}
\tilde{\mathbf{S}}_{\partial D}^{\omega}[\boldsymbol{\varphi}](\mathbf{x}), & \mathbf{x} \in D , \\
\mathbf{S}_{\partial D}^{\omega}[\boldsymbol{\psi}](\mathbf{x})+\mathbf{u}^{i}, & \mathbf{x} \in \mathbb{R}^{2} \backslash \bar{D}.
\end{cases}
\end{equation}
for some density functions $\boldsymbol{\varphi}, \boldsymbol{\psi} \in L^{2}(\partial D)^{2}$. By matching the transmission conditions on the boundary, i.e. the third and fourth conditions in \eqref{Lame system1} and with the help of the jump formula in \eqref{jump formula}, the density functions $\boldsymbol{\varphi}$ and $\boldsymbol{\psi}$ in \eqref{system solution1} should satisfy the following system
\begin{equation}\label{Aomegadelta}
\mathcal{A}(\omega, \delta)[\Phi](\mathbf{x})=\mathcal{F}(\mathbf{x}), \quad \mathbf{x} \in \partial D,
\end{equation}
where
\begin{equation}\label{mathcalA}
\mathcal{A}(\omega, \delta)=
\left(\begin{array}{cc}
\tilde{\mathbf{S}}_{\partial D}^{\omega} & -\mathbf{S}_{\partial D}^{\omega} \\
-\frac{\mathbf{I}_2}{2}+\tilde{\mathbf{K}}_{\partial D}^{\omega, *} & -(\frac{\mathbf{I}_2}{2}+\mathbf{K}_{\partial D}^{\omega, *})
\end{array}\right), \quad
\Phi=\left(\begin{array}{c}
\boldsymbol{\varphi }\\
\boldsymbol{\psi}
\end{array}\right), \quad
\mathcal{F}=\left(\begin{array}{c}
\mathbf{u}^{i} \\
\partial_{\boldsymbol{\nu}} \mathbf{u}^{i}
\end{array}\right) .
\end{equation}
For the further discussion, we introduce the spaces {$\mathcal{H}=L^{2}(\partial D)^{2} \times L^{2}(\partial D)^{2}$ and $\mathcal{H}^{1}=$ $H^{1}(\partial D)^{2} \times L^{2}(\partial D)^{2}$}. Apparently, the operator $\mathcal{A}(\omega, \delta)$ is defined from $\mathcal{H}$ to $\mathcal{H}^{1}$.

{We first look at the limiting case when $\delta=0$ and $\omega=0$. Since $\mathbf{S}_{\partial D}^{\omega}$ exhibits a singular behavior in the limit $\omega \to 0$, to address this, we first consider the corresponding operator pencil
\begin{equation}\label{a0}
\mathcal{A}_0(\omega)=
\begin{pmatrix}
0 & -\hat{\mathbf{S}}_{\partial D}^{\omega}\\
-\frac{\mathbf{I}_2}{2}+\mathbf{K}_{\partial D}^* &
-(\frac{\mathbf{I}_2}{2}+\mathbf{K}_{\partial D}^*)\\
\end{pmatrix},\quad\quad
\mathcal{A}_0^*(\omega)=
\begin{pmatrix}
0 & -\frac{\mathbf{I}_2}{2}+\mathbf{K}_{\partial D}\\
-\hat{\mathbf{S}}_{\partial D}^{{\omega},* }&
-(\frac{\mathbf{I}_2}{2}+\mathbf{K}_{\partial D})\\
\end{pmatrix},
\end{equation}
where $\mathcal{A}_0^*(\omega)$ and $\mathbf{S}_{\partial D}^*$ are the adjoint of $\mathcal{A}_0(\omega)$ and $\mathbf{S}_{\partial D}$,  together with
$$
\hat{\mathbf{S}}_{\partial D}^{{\omega},* }[\boldsymbol{\varphi}](\mathbf{x}):=\mathbf{S}_{\partial D}^{*}[\boldsymbol{\varphi}](\mathbf{x})+\gamma_{\omega}\int_{\partial D}\boldsymbol{\varphi}(\mathbf{y})d s(\mathbf{y}).
$$
To simplify notation, we shall use $\mathcal{A}_0$ to denote $\mathcal{A}_0({\omega})$ in what follows. The analysis now focuses on the kernel of $\mathcal{A}_0$ and $\mathcal{A}^*_0$, one can easily check the following Lemma holds.}
\begin{lem}\label{ker space2}
We have
\begin{itemize}
\item[(i)] $\operatorname{ker} \mathcal{A}_0=\operatorname{span}\{\hat{\Phi}_{i}\}(1\leq i\leq 3)$, where
$$
\hat{\Phi}_{i}=
\begin{pmatrix}
\boldsymbol{\zeta}_i\\
0
\end{pmatrix}.
$$
Moreover, the functions satisfy $(\hat{\Phi}_{i},\hat{\Phi}_{j})={\delta_{ij}}$.
\item[(ii)] $\operatorname{ker} \mathcal{A}^*_0=\operatorname{span}\{\hat{\Psi}_{i}\}(1\leq i\leq 3)$, where
$$
\hat{\Psi}_{i}=d_{i}
\begin{pmatrix}
{-(\hat{\mathbf{S}}_{\partial D}^{{\omega},* })^{-1}[\boldsymbol{\xi}_i]}\\
\boldsymbol{\xi}_i
\end{pmatrix}.
$$
The selection of constant $d_{i}$ is such that $(\hat{\Psi}_{i},\hat{\Psi}_{j})={\delta_{ij}}$.
\end{itemize}
\end{lem}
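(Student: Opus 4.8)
The plan is to read off both kernels directly from the block structure of the pencils $\mathcal{A}_0$, $\mathcal{A}_0^{*}$ in \eqref{a0}, using only the invertibility of $\hat{\mathbf{S}}_{\partial D}^{\omega}$ established above, together with $\mathbf{\Psi}^{*}=\ker\big(-\tfrac{1}{2}\mathbf{I}_2+\mathbf{K}_{\partial D}^{*}\big)=\operatorname{span}\{\boldsymbol{\zeta}_i\}$ and $\mathbf{\Psi}=\ker\big(-\tfrac{1}{2}\mathbf{I}_2+\mathbf{K}_{\partial D}\big)=\operatorname{span}\{\boldsymbol{\xi}_i\}$ (Lemma~\ref{Xi}), both three-dimensional. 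For part~(i), I would write a generic element of $\ker\mathcal{A}_0$ as $\Phi=(\boldsymbol{\varphi},\boldsymbol{\psi})^{T}$. The first component of $\mathcal{A}_0\Phi=0$ is $\hat{\mathbf{S}}_{\partial D}^{\omega}[\boldsymbol{\psi}]=0$, so $\boldsymbol{\psi}=0$ since $\hat{\mathbf{S}}_{\partial D}^{\omega}$ is invertible in $\mathcal{L}(L^{2}(\partial D)^{2},H^{1}(\partial D)^{2})$; the second component then collapses to $\big(-\tfrac{1}{2}\mathbf{I}_2+\mathbf{K}_{\partial D}^{*}\big)[\boldsymbol{\varphi}]=0$, i.e.\ $\boldsymbol{\varphi}\in\mathbf{\Psi}^{*}$. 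Conversely every $(\boldsymbol{\varphi},0)^{T}$ with $\boldsymbol{\varphi}\in\mathbf{\Psi}^{*}$ lies in $\ker\mathcal{A}_0$, so $\ker\mathcal{A}_0=\mathbf{\Psi}^{*}\times\{0\}=\operatorname{span}\{\hat{\Phi}_i\}$; after replacing $\{\boldsymbol{\zeta}_i\}$ by its Gram--Schmidt orthonormalization in $L^{2}(\partial D)^{2}$ (still a basis of $\mathbf{\Psi}^{*}$) one gets $(\hat{\Phi}_i,\hat{\Phi}_j)=(\boldsymbol{\zeta}_i,\boldsymbol{\zeta}_j)=\delta_{ij}$.

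For part~(ii) the computation is symmetric. Writing $\Psi=(\boldsymbol{a},\boldsymbol{b})^{T}\in\ker\mathcal{A}_0^{*}$, the first component of $\mathcal{A}_0^{*}\Psi=0$ gives $\big(-\tfrac{1}{2}\mathbf{I}_2+\mathbf{K}_{\partial D}\big)[\boldsymbol{b}]=0$, hence $\boldsymbol{b}\in\mathbf{\Psi}=\operatorname{span}\{\boldsymbol{\xi}_i\}$ by Lemma~\ref{Xi}; for such $\boldsymbol{b}$ one has $\mathbf{K}_{\partial D}[\boldsymbol{b}]=\tfrac{1}{2}\boldsymbol{b}$, so $\big(\tfrac{1}{2}\mathbf{I}_2+\mathbf{K}_{\partial D}\big)[\boldsymbol{b}]=\boldsymbol{b}$ and the second component reads $\hat{\mathbf{S}}_{\partial D}^{\omega,*}[\boldsymbol{a}]=-\boldsymbol{b}$. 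The operator $\hat{\mathbf{S}}_{\partial D}^{\omega,*}$ has the same mapping and invertibility properties as $\hat{\mathbf{S}}_{\partial D}^{\omega}$ (its singular part $\mathbf{S}_{\partial D}^{*}$ is just $\mathbf{S}_{\partial D}$ up to transposition, and the scalar in front of $\int_{\partial D}\,\cdot\,$ still blows up as $\omega\to0$, so the argument behind Proposition~\ref{matrixC} and the invertibility theorem carries over verbatim), whence $\boldsymbol{a}=-(\hat{\mathbf{S}}_{\partial D}^{\omega,*})^{-1}[\boldsymbol{b}]$ is uniquely determined by $\boldsymbol{b}$. Thus $\ker\mathcal{A}_0^{*}=\big\{\big(-(\hat{\mathbf{S}}_{\partial D}^{\omega,*})^{-1}[\boldsymbol{b}],\boldsymbol{b}\big)^{T}:\boldsymbol{b}\in\mathbf{\Psi}\big\}=\operatorname{span}\{\hat{\Psi}_i\}$, and after an appropriate choice of the $d_i$ (and, if necessary, of the basis $\{\boldsymbol{\xi}_i\}$ of $\mathbf{\Psi}$) we obtain $(\hat{\Psi}_i,\hat{\Psi}_j)=\delta_{ij}$.

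I do not expect a genuine obstacle: once the invertibility theorem of Section~2 and the kernel descriptions of the static Neumann--Poincar\'e operators are in hand, everything reduces to a one-line block computation. The two points that need a little care are (a) transferring invertibility from $\hat{\mathbf{S}}_{\partial D}^{\omega}$ to the adjoint $\hat{\mathbf{S}}_{\partial D}^{\omega,*}$ on the correct pair of spaces, and (b) the linear-algebra bookkeeping behind the normalizations $(\hat{\Phi}_i,\hat{\Phi}_j)=\delta_{ij}$ and $(\hat{\Psi}_i,\hat{\Psi}_j)=\delta_{ij}$, which is just Gram--Schmidt on each three-dimensional kernel; these normalizations are not cosmetic, since the Gohberg--Sigal argument in Section~3 requires orthonormal bases of $\ker\mathcal{A}_0$ and $\ker\mathcal{A}_0^{*}$.
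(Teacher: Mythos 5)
Your proposal is correct and follows exactly the route the paper takes: the paper's own proof is the one-line remark that the result follows by combining Lemma~\ref{Xi}, Lemma~\ref{kernelspace}, and the block form \eqref{a0}, and your argument is simply the full unpacking of that combination (invert $\hat{\mathbf{S}}_{\partial D}^{\omega}$ to kill $\boldsymbol{\psi}$, reduce the second row to the static N--P kernel, and dually for $\mathcal{A}_0^{*}$). On the normalization point you are also right to flag that scaling by $d_i$ alone cannot make $\{\hat{\Psi}_i\}$ orthonormal unless the unscaled vectors are already pairwise orthogonal; the paper acknowledges this in the remark following the lemma, where the choice of $\{\boldsymbol{\xi}_i\}$ and $\{\boldsymbol{\zeta}_i\}$ is adjusted by translation, rescaling and diagonalization of the associated symmetric Gram matrices, so your parenthetical ``and, if necessary, of the basis $\{\boldsymbol{\xi}_i\}$'' is exactly the correct caveat rather than a gap.
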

\begin{proof}
This proof can be obtained by combining Lemmas \ref{Xi} and \ref{kernelspace}, as well as expression \eqref{a0}.
\end{proof}

\begin{rem}
Here, we need to emphasize two points. On the one hand, the basis function $\hat{\Psi}_{i}(1\leq i\leq 3)$ of kernel space $\mathcal{A}^*_0$ is related to $\omega$, which is different from the situation in three-dimensions. On the other hand, the functions $\boldsymbol{\xi}_i(1\leq i\leq 3)$ and $\boldsymbol{\zeta}_i(1\leq i\leq 3)$ defined in Lemma \ref{ker space2} may not be exactly same as these in equation \eqref{xi} and Lemma \ref{kernelspace}. {In fact}, by using translation transformation~(i.e. moving the center of gravity to the origin), coefficient reconfiguration, and the diagonalisation of symmetric matrices, we can obtain that $(\boldsymbol{\zeta}_i,\boldsymbol{\xi}_j)=\delta_{ij}$, $(\boldsymbol{\zeta}_i,\boldsymbol{\zeta}_j)=\delta_{ij}$ and $(\boldsymbol{\xi}_i,\boldsymbol{\xi}_j)=\lambda_i\delta_{ij}$ for fixed $i$ and $j$.
\end{rem}

According to the Gohberg-Sigal theory \cite{SpectralAnalysis}, the following result regarding the existence of sub-wavelength resonances in system \eqref{Lame system1} can be obtained.
\begin{lem}
For any $\delta$ sufficiently small, if the parameter $\epsilon\ll 1$, then there exists characteristic value $\omega^* \ll 1$ for the operator-valued analytic function $\mathcal{A}(\omega,\delta)$. Moreover, $\omega^*$ depends on $\delta$ continuously.
\end{lem}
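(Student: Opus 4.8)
The plan is to reduce the solvability of \eqref{Aomegadelta} near $\omega=0$ to a $3\times3$ determinant equation, and then to produce a root of that equation which tends to $0$ with $\delta$. First I would record the structural facts I will lean on: by \eqref{tildesinglelayer}--\eqref{tildenplayer} together with Propositions \ref{fundamental solution expansion}--\ref{fundamental tilde solution expansion}, the operator $\mathcal{A}(\omega,\delta):\mathcal{H}\to\mathcal{H}^{1}$ is Fredholm of index $0$, analytic in the two variables $\omega^{2}$ and $\omega^{2}\ln\omega$, and continuous in $\delta$, and it degenerates at $\delta=0$ to $\mathcal{A}_0(\omega)$ of \eqref{a0}, whose kernel $\operatorname{span}\{\hat\Phi_i\}_{i=1}^{3}$ and cokernel $\operatorname{span}\{\hat\Psi_i\}_{i=1}^{3}$ (Lemma \ref{ker space2}) are three-dimensional and persist for all small $\omega$. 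The $\omega\to0$ singularity of $\mathbf{S}_{\partial D}^{\omega}$ is harmless here, having been absorbed into the invertible operator $\hat{\mathbf{S}}_{\partial D}^{\omega}$.

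Next I would carry out the Gohberg--Sigal/Lyapunov--Schmidt reduction. Splitting $\mathcal{H}=\operatorname{ker}\mathcal{A}_0\oplus\mathcal{X}$ and $\mathcal{H}^{1}=\operatorname{ran}\mathcal{A}_0\oplus\operatorname{span}\{\hat\Psi_i\}$ with the associated bounded projections $\mathbb{P},\mathbb{P}'$, the block $(1-\mathbb{P}')\mathcal{A}_0|_{\mathcal{X}}:\mathcal{X}\to\operatorname{ran}\mathcal{A}_0$ is invertible, hence so is the corresponding block of $\mathcal{A}(\omega,\delta)$ for $(\omega,\delta)$ in a fixed small neighbourhood of the origin; the Schur complement then produces a $3\times3$ matrix-valued function $\mathcal{M}(\omega,\delta)$ with the property that $\omega^{*}$ is a characteristic value of $\mathcal{A}(\cdot,\delta)$ exactly when $\det\mathcal{M}(\omega^{*},\delta)=0$, with matching multiplicities. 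The crucial point is that $\mathcal{M}(\omega,0)\equiv0$ because $\hat\Phi_j\in\operatorname{ker}\mathcal{A}_0$, so $\mathcal{M}$ is purely a perturbation term; its leading entries are $\big(\mathcal{A}(\omega,\delta)\hat\Phi_j,\hat\Psi_i\big)$, which I would compute from \eqref{tildesinglelayer}, \eqref{tildenplayer} using $\mathbf{S}_{\partial D}[\boldsymbol\zeta_j]=\mathbf{F}\mathbf{C}$, $\int_{\partial D}\boldsymbol\zeta_j\,ds=\boldsymbol\xi_j$ (or $0$ for $j=3$), the normalisations in the Remark following Lemma \ref{ker space2}, and $(-\mathbf{I}_2/2+\mathbf{K}_{\partial D}^{*})\boldsymbol\zeta_j=0$. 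After cancelling the common powers of $\tau$ this yields
$$\mathcal{M}(\omega,\delta)=\rho\,\omega^{2}\ln\omega\,\hat{\mathbf{P}}+\rho\,\omega^{2}\big(\ln(\sqrt{\rho}\,\tau)\hat{\mathbf{P}}+\hat{\mathbf{M}}\big)-\epsilon\,\hat{\mathbf{Q}}+\mathcal{O}\big(\delta/\ln\omega+\omega^{4}\ln\omega\big),$$
which is the $N=1$ instance of \eqref{multiformula}.

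Finally, since $\hat{\mathbf{P}}$ is invertible (which follows from its explicit form), the leading matrix $\rho\,\omega^{2}\ln\omega\,\hat{\mathbf{P}}-\epsilon\hat{\mathbf{Q}}$ is singular precisely when $\rho\,\omega^{2}\ln\omega$ is an eigenvalue of $\epsilon\hat{\mathbf{P}}^{-1}\hat{\mathbf{Q}}$, i.e. for $\omega$ solving $\omega^{2}\ln\omega=\mathcal{O}(\epsilon)=\mathcal{O}(\delta)$, a scalar equation which has a solution $\omega\in\mathbb{C}$ of modulus $\to0$ as $\delta\to0$. I would then apply the generalized argument principle to the scalar function $\det\mathcal{M}(\cdot,\delta)$ on a circle $|\omega|=r$ with $\delta\ll r\ll1$, comparing it with its leading term, to conclude that $\det\mathcal{M}(\cdot,\delta)$ has a zero $\omega^{*}=\omega^{*}(\delta)\ll1$ inside, which pulls back through the reduction to the desired characteristic value of $\mathcal{A}(\cdot,\delta)$; continuity of $\omega^{*}$ in $\delta$ then follows from the joint continuity of the family together with the Gohberg--Sigal stability of characteristic values under such perturbations. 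I expect the main obstacle to be keeping the argument principle legitimate in the presence of the $\ln\omega$ branch point of $\mathbf{S}_{\partial D}^{\omega}$ at $\omega=0$ --- so that ``number of characteristic values'' is well defined --- which I would handle by passing to the variable $t=\omega^{2}\ln\omega$ (equivalently, working on the logarithmic cover), while in the same step one must verify $\det\mathcal{M}(\cdot,\delta)\not\equiv0$, which is exactly the nondegeneracy of $\hat{\mathbf{P}}$.
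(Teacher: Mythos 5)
The paper does not actually record a proof of this lemma; it simply invokes the Gohberg--Sigal theory of \cite{SpectralAnalysis} and states the result. Your Lyapunov--Schmidt/Schur-complement reduction is exactly the mechanism underlying that invocation, and the reduction to the $3\times3$ determinant $\det\mathcal{M}(\omega,\delta)$, the identification of the leading terms, and the change to a covering variable to neutralise the $\ln\omega$ branch point are all the right ingredients. Methodologically you are on the same page as the paper, with more detail filled in.

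There is, however, a concrete gap at the final step: the matrix $\hat{\mathbf{P}}$ (equivalently $\mathbf{P}$ for a single layer) is \emph{not} invertible. From the proof of Proposition \ref{matrixC} one has $\int_{\partial D}\boldsymbol{\zeta}_3\,ds=0$, and after the normalisation indicated in the Remark following Lemma \ref{ker space2} (centre of gravity moved to the origin) one also has $\int_D\boldsymbol{\xi}_3\,d\mathbf{x}=0$; by Proposition \ref{matrices p and m} this forces the third row \emph{and} the third column of $\mathbf{P}$ to vanish, so $\operatorname{rank}\mathbf{P}\le 2$. For a disk this is recorded explicitly in Lemma \ref{thmdiskomega} as $p_3=0$. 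Consequently the statement ``$\rho\omega^2\ln\omega$ is an eigenvalue of $\epsilon\hat{\mathbf{P}}^{-1}\hat{\mathbf{Q}}$'' is not meaningful, and the truncated block $\rho\omega^2\ln\omega\,\hat{\mathbf{P}}-\epsilon\hat{\mathbf{Q}}$ has no small zero of its determinant coming from the $\boldsymbol{\xi}_3$-direction (since $\hat{Q}_{33}\neq 0$ there); that shear resonance is supplied instead by the $\rho\omega^2\hat{\mathbf{M}}$ contribution, which your ``leading matrix'' discards. The argument-principle count in the variable $t=\omega^2\ln\omega$ inherits the same defect, because the shear root does not live on the $t$-scale but on the $\omega^2$-scale, and the ``nondegeneracy of $\hat{\mathbf{P}}$'' that you lean on to ensure $\det\mathcal{M}(\cdot,\delta)\not\equiv 0$ is precisely what fails. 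For the present lemma --- existence of at least one $\omega^{*}\ll 1$ --- the argument is repairable: restrict to the rank-$2$ invariant sub-block of $\hat{\mathbf{P}}$ corresponding to $\boldsymbol{\xi}_1,\boldsymbol{\xi}_2$ (where for a disk the block is $a_{\lambda,\mu}\pi R^2\mathbf{I}_2$ and hence invertible), or keep $\rho\omega^2\hat{\mathbf{M}}$ in the leading order so that the $\boldsymbol{\xi}_3$-direction contributes a root with $\omega^2\sim\epsilon$. As written, though, the invertibility claim for $\hat{\mathbf{P}}$ is false and the eigenvalue step must be withdrawn and replaced.
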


\begin{lem}
The operator $\mathcal{A}(\omega, \delta)$ defined in \eqref{mathcalA} has the following asymptotic expansion
$$
\mathcal{A}(\omega,\delta)=\mathcal{A}_0+\mathcal{B}(\omega,\delta):=\mathcal{A}_0+\omega^2\ln\omega\mathcal{A}_{1,0}+\omega^2\mathcal{A}_{2,0}+\delta\mathcal{A}_{0,1}
+\mathcal{O}(\delta\omega^2\ln{\omega}+\omega^4\ln{\omega}),
$$
where
$$
\mathcal{A}_{1,0}=
\begin{pmatrix}
0 & -\rho{\mathbf{S}}_{\partial D,1}\\
\rho\tau^2\mathbf{K}_{\partial D,1}^{*} &
-\rho\mathbf{K}_{\partial D,1}^{*}\\
\end{pmatrix},\quad
\mathcal{A}_{0,1}=
\begin{pmatrix}
\hat{\mathbf{S}}_{\partial D}^{\tau\omega} & 0\\
0 & 0\\
\end{pmatrix},
$$
and
$$
\mathcal{A}_{2,0}=
\begin{pmatrix}
0 & -\rho\ln\sqrt{\rho}\mathbf{S}_{\partial D,1}-\rho\mathbf{S}_{\partial D,2}\\
\rho\tau^2\ln(\sqrt{\rho}\tau)\mathbf{K}_{\partial D,1}^{*} +\rho\tau^2\mathbf{K}_{\partial D,2}^{*} &
-\rho\ln\sqrt{\rho}\mathbf{K}_{\partial D,1}^{*}-\rho\mathbf{K}_{\partial D,2}^{*}\\
\end{pmatrix}.\quad
$$
\end{lem}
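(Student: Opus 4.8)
The plan is to substitute the four operator-level low-frequency expansions already established, namely \eqref{singlelayer} for $\mathbf{S}_{\partial D}^{\omega}$, \eqref{nplayer} for $\mathbf{K}_{\partial D}^{\omega,*}$, \eqref{tildesinglelayer} for $\tilde{\mathbf{S}}_{\partial D}^{\omega}$ and \eqref{tildenplayer} for $\tilde{\mathbf{K}}_{\partial D}^{\omega,*}$ --- all consequences of Propositions \ref{fundamental solution expansion}, \ref{fundamental tilde solution expansion} and Lemma \ref{lemgamma} --- entry by entry into the $2\times 2$ block form \eqref{mathcalA} of $\mathcal{A}(\omega,\delta)$, and then to collect the resulting terms according to the separate scales $1$, $\delta$, $\omega^{2}\ln\omega$ and $\omega^{2}$. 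For the off-diagonal and $(2,2)$ entries this is immediate: the $\omega^{0}$-parts $-\hat{\mathbf{S}}_{\partial D}^{\omega}$, $-\tfrac{1}{2}\mathbf{I}_2+\mathbf{K}_{\partial D}^{*}$ and $-(\tfrac{1}{2}\mathbf{I}_2+\mathbf{K}_{\partial D}^{*})$ assemble the operator pencil $\mathcal{A}_0$ of \eqref{a0}; the $\omega^{2}\ln\omega$-parts $-\rho\mathbf{S}_{\partial D,1}$, $\rho\tau^{2}\mathbf{K}_{\partial D,1}^{*}$, $-\rho\mathbf{K}_{\partial D,1}^{*}$ assemble $\mathcal{A}_{1,0}$; and the $\omega^{2}$-parts $-\rho\ln\sqrt{\rho}\,\mathbf{S}_{\partial D,1}-\rho\mathbf{S}_{\partial D,2}$, $\rho\tau^{2}\ln(\sqrt{\rho}\tau)\mathbf{K}_{\partial D,1}^{*}+\rho\tau^{2}\mathbf{K}_{\partial D,2}^{*}$ and $-\rho\ln\sqrt{\rho}\,\mathbf{K}_{\partial D,1}^{*}-\rho\mathbf{K}_{\partial D,2}^{*}$ assemble $\mathcal{A}_{2,0}$. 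The tails $\mathcal{O}(\omega^{4}\ln\omega+\omega^{4})$ of \eqref{singlelayer} and \eqref{nplayer} and the tail $\mathcal{O}(\tau^{4}\omega^{4}\ln\omega+\tau^{4}\omega^{4})$ of \eqref{tildenplayer} go into the remainder, the latter being $\mathcal{O}(\omega^{4}\ln\omega+\omega^{4})$ because $\tau=\mathcal{O}(1)$ by \eqref{tau2}.

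The one entry behaving differently is the $(1,1)$ block $\tilde{\mathbf{S}}_{\partial D}^{\omega}$: by \eqref{tildesinglelayer} its whole expansion carries an overall factor $\delta$, with leading term $\delta\hat{\mathbf{S}}_{\partial D}^{\tau\omega}$. Hence it contributes nothing to $\mathcal{A}_0$ --- which is why the $(1,1)$-block in \eqref{a0} is zero --- it supplies exactly the single nonzero block $\hat{\mathbf{S}}_{\partial D}^{\tau\omega}$ of $\delta\,\mathcal{A}_{0,1}$, and its remaining pieces $\delta\omega^{2}\ln\omega\,\rho\tau^{2}\mathbf{S}_{\partial D,1}$, $\delta\omega^{2}\rho\tau^{2}(\ln(\sqrt{\rho}\tau)\mathbf{S}_{\partial D,1}+\mathbf{S}_{\partial D,2})$ and $\mathcal{O}(\delta\tau^{4}\omega^{4}\ln\omega+\delta\tau^{4}\omega^{4})$ are all $\mathcal{O}(\delta\omega^{2}\ln\omega)$ or smaller and go into the remainder. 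Putting the four blocks together, the dominant discarded terms are $\mathcal{O}(\delta\omega^{2}\ln\omega)$ from the $(1,1)$ block and $\mathcal{O}(\omega^{4}\ln\omega)$ from the others, so $\mathcal{B}(\omega,\delta)-\omega^{2}\ln\omega\,\mathcal{A}_{1,0}-\omega^{2}\mathcal{A}_{2,0}-\delta\,\mathcal{A}_{0,1}=\mathcal{O}(\delta\omega^{2}\ln\omega+\omega^{4}\ln\omega)$, which is the asserted expansion once the matrices $\mathcal{A}_{1,0}$, $\mathcal{A}_{2,0}$, $\mathcal{A}_{0,1}$ are read off by matching coefficients.

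This is essentially a bookkeeping exercise rather than a proof with a hard core, and the two points that need care are the following. First, $\mathcal{A}_0$ and $\mathcal{A}_{0,1}$ are themselves $\omega$-dependent, because $\hat{\mathbf{S}}_{\partial D}^{\omega}$ and $\hat{\mathbf{S}}_{\partial D}^{\tau\omega}$ carry the logarithmic quantities $\gamma_{\omega}$ and $\gamma_{\tau\omega}$ through \eqref{hats1} and \eqref{hats2}; this is consistent with the convention, fixed above, of writing $\mathcal{A}_0$ for $\mathcal{A}_0(\omega)$, and it means the identity must be read as an expansion in the two small parameters $\omega$ and $\delta$ modulo logarithmic factors, not as a convergent power series. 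Second, all equalities and all $\mathcal{O}(\cdot)$ bounds are to be understood in the operator norm of $\mathcal{L}(\mathcal{H},\mathcal{H}^{1})$ --- that is, $L^{2}(\partial D)^{2}\to H^{1}(\partial D)^{2}$ for the single-layer components and $L^{2}(\partial D)^{2}\to L^{2}(\partial D)^{2}$ for the Neumann--Poincar\'e components --- which is inherited termwise from the kernel estimates in Lemma \ref{lemgamma}, uniformly for $\omega\ll1$ and $\delta\ll1$; with these conventions in place the coefficient-matching described above completes the proof.
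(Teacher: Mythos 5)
Your proof is correct and takes essentially the same approach as the paper, whose own proof is simply the one-line remark that the lemma "directly follows from" the expansions \eqref{singlelayer}, \eqref{nplayer}, \eqref{tildesinglelayer} and \eqref{tildenplayer}. You merely spell out the entry-by-entry substitution and coefficient matching that the paper leaves implicit, and your two clarifying remarks — that $\mathcal{A}_0$ and $\mathcal{A}_{0,1}$ carry residual $\omega$-dependence through $\gamma_\omega$, $\gamma_{\tau\omega}$, and that all bounds are in $\mathcal{L}(\mathcal{H},\mathcal{H}^1)$ — are accurate and consistent with the paper's conventions.
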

\begin{proof}
The proof directly follows from asymptotic expansions defined in \eqref{singlelayer} \eqref{nplayer} as well as \eqref{tildesinglelayer} and \eqref{tildenplayer}.
\end{proof}

\begin{rem}\label{regime_rem}
		It is noted that we assume $\delta \ln \omega \ll 1$ in the asymptotic expansion of $\mathcal{A}(\omega,\delta)$ above and so that $\Vert \mathcal{A}_{0,1} \Vert \ll 1$. $\mathcal{A}(\omega,\delta)$ can be regarded as $\mathcal{A}_0$ together with a small perturbation $\mathcal{B}(\omega,\delta)$. Without this assumption, the operator $\mathcal{A}_({\omega,\delta})$ would be invertible, and no characteristic values would lie within this range.
\end{rem}

We now perturb $\mathcal{A}_0$ by a operator $\mathcal{P}$ given by
$$
\mathcal{P}_{j}[\Phi]=(\Phi,\hat{\Phi}_{j})\hat{\Psi}_{j},\quad 1 \leq j \leq 3,
$$
and denote it by
\begin{equation}\label{hatA0}
\hat{\mathcal{A}_0}:=\mathcal{A}_0+\mathcal{P}=\mathcal{A}_{0}+\sum_{j=1}^{3} \mathcal{P}_{j}.
\end{equation}
Then the operator $\hat{\mathcal{A}}_{0}$ shares the following properties.
\begin{lem}
There holds
\begin{itemize}
\item[(i)] the operator $\hat{\mathcal{A}}_{0}$ is {bijective} in $\mathcal{L}\left(\mathcal{H}, \mathcal{H}^{1}\right)$ and $\hat{\mathcal{A}}_{0}[\hat{\Phi}_{i}]=\hat{\Psi}_{i}(1 \leq i \leq 3)$.
\item[(ii)]the adjoint operator of $\hat{\mathcal{A}}_{0}$, i.e. $\hat{\mathcal{A}}_{0}^{*}$, is also {bijective} and $\hat{\mathcal{A}}_{0}^{*}[\hat{\Psi}_{i}]=\hat{\Phi}_{i}(1 \leq i \leq 3)$.
\end{itemize}
\end{lem}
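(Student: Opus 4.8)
The plan is to treat $\hat{\mathcal A}_0=\mathcal A_0+\mathcal P$ as a finite-rank correction of the Fredholm operator $\mathcal A_0$ that precisely "fills in" its three-dimensional kernel and cokernel, and then read off the two displayed identities directly from the orthonormality relations in Lemma~\ref{ker space2}.

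First I would record the Fredholm structure of $\mathcal A_0$. Its block form in \eqref{a0} is triangular in the sense that the first output component is $-\hat{\mathbf S}_{\partial D}^{\omega}[\boldsymbol\psi]$, and $\hat{\mathbf S}_{\partial D}^{\omega}$ is an isomorphism $L^2(\partial D)^2\to H^1(\partial D)^2$ by the theorem just proved, while $-\mathbf I_2/2+\mathbf K_{\partial D}^{*}$ and $\mathbf I_2/2+\mathbf K_{\partial D}^{*}$ are Fredholm of index zero on $L^2(\partial D)^2$. Hence $\mathcal A_0\in\mathcal L(\mathcal H,\mathcal H^1)$ is Fredholm of index zero, with $\ker\mathcal A_0=\mathrm{span}\{\hat\Phi_i\}_{i=1}^3$, $\ker\mathcal A_0^{*}=\mathrm{span}\{\hat\Psi_i\}_{i=1}^3$, and closed range equal to $(\ker\mathcal A_0^{*})^{\perp}$ with respect to the $L^2$-type pairing in which $\mathcal A_0^{*}$ of \eqref{a0} is the adjoint. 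Since $\mathcal P=\sum_{j=1}^3\mathcal P_j$ has rank at most three, $\hat{\mathcal A}_0$ is again Fredholm of index zero, so it suffices to prove injectivity.

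Next I would establish injectivity. Suppose $\hat{\mathcal A}_0[\Phi]=0$, i.e. $\mathcal A_0[\Phi]=-\sum_{j=1}^3(\Phi,\hat\Phi_j)\hat\Psi_j$. Pairing with $\hat\Psi_k$ and using $(\mathcal A_0[\Phi],\hat\Psi_k)=(\Phi,\mathcal A_0^{*}[\hat\Psi_k])=0$ together with $(\hat\Psi_j,\hat\Psi_k)=\delta_{jk}$ gives $(\Phi,\hat\Phi_k)=0$ for $k=1,2,3$; consequently $\mathcal A_0[\Phi]=0$, so $\Phi=\sum_i c_i\hat\Phi_i$, and then $0=(\Phi,\hat\Phi_k)=c_k$ forces $\Phi=0$. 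Thus $\hat{\mathcal A}_0$ is injective, hence bijective, and the bounded inverse theorem yields $\hat{\mathcal A}_0^{-1}\in\mathcal L(\mathcal H^1,\mathcal H)$. The identity in (i) is then immediate, since $\hat\Phi_i\in\ker\mathcal A_0$ gives $\hat{\mathcal A}_0[\hat\Phi_i]=\mathcal A_0[\hat\Phi_i]+\sum_j(\hat\Phi_i,\hat\Phi_j)\hat\Psi_j=0+\hat\Psi_i$.

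For part (ii), I would note that $\mathcal P_j^{*}[\Psi]=(\Psi,\hat\Psi_j)\hat\Phi_j$, so $\hat{\mathcal A}_0^{*}=\mathcal A_0^{*}+\sum_{j=1}^3\mathcal P_j^{*}$, and repeat the argument verbatim with the roles of $\hat\Phi_i$ and $\hat\Psi_i$ (and of $\mathcal A_0$, $\mathcal A_0^{*}$) interchanged, using $\ker\mathcal A_0^{*}=\mathrm{span}\{\hat\Psi_i\}$, range of $\mathcal A_0^{*}$ equal to $(\ker\mathcal A_0)^{\perp}$, and the orthonormality relations; this gives bijectivity of $\hat{\mathcal A}_0^{*}$ and $\hat{\mathcal A}_0^{*}[\hat\Psi_i]=\mathcal A_0^{*}[\hat\Psi_i]+\sum_j(\hat\Psi_i,\hat\Psi_j)\hat\Phi_j=\hat\Phi_i$. (Alternatively, bijectivity of $\hat{\mathcal A}_0^{*}$ follows at once from that of $\hat{\mathcal A}_0$ with bounded inverse.) The step I expect to need the most care is the first one: justifying that $\mathcal A_0$ is Fredholm of index zero between the two distinct spaces $\mathcal H$ and $\mathcal H^1$ and that its range is exactly $(\ker\mathcal A_0^{*})^{\perp}$ for the pairing rendering $\mathcal A_0^{*}$ of \eqref{a0} the adjoint; once this functional-analytic setup is pinned down, everything else is bookkeeping with the three-dimensional kernels and their orthonormal bases.
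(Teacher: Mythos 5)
Your proposal is correct and follows essentially the same route as the paper: observe that $\mathcal A_0$ is Fredholm of index zero, note that adding the finite-rank perturbation $\mathcal P$ preserves the index, and then read the identities $\hat{\mathcal A}_0[\hat\Phi_i]=\hat\Psi_i$ and $\hat{\mathcal A}_0^*[\hat\Psi_i]=\hat\Phi_i$ directly from $\hat\Phi_i\in\ker\mathcal A_0$, $\hat\Psi_i\in\ker\mathcal A_0^*$, and the orthonormality from Lemma~\ref{ker space2}. The only real difference is one of exposition: the paper asserts that bijectivity ``directly follows from the construction'' of $\hat{\mathcal A}_0$ in~\eqref{hatA0}, whereas you supply the missing injectivity computation (pairing $\hat{\mathcal A}_0[\Phi]=0$ against $\hat\Psi_k$, deducing $(\Phi,\hat\Phi_k)=0$, then $\Phi\in\ker\mathcal A_0\cap(\ker\mathcal A_0)^\perp=\{0\}$). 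This is a useful amplification rather than a different proof; the caveat you yourself flag, about making the duality pairing between $\mathcal H$ and $\mathcal H^1$ precise so that the range of $\mathcal A_0$ is exactly $(\ker\mathcal A_0^*)^\perp$, is a real gap in both your write-up and the paper's, but the paper treats it at the same level of informality.
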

\begin{proof}
Note that the operator ${\mathcal{A}}_{0}$ is a Fredholm operator with the index 0. Thus the {bijection} of the operator $\hat{\mathcal{A}}_{0}$ directly follows from that the construction of the operator
$\hat{\mathcal{A}}_{0}$ in \eqref{hatA0}. We can confirm that
\begin{equation}
\hat{\mathcal{A}}_{0}[\hat{\Phi}_{i}]=(\mathcal{A}_0+\mathcal{P})[\hat{\Phi}_{i}]=\sum_{j=1}^{3}(\hat{\Phi}_{i},\hat{\Phi}_{j})\hat{\Psi}_{j}
={\delta_{ij}}\hat{\Psi}_{i}=\hat{\Psi}_{i}.\nonumber
\end{equation}
And in a similar manner,
\begin{equation}
\hat{\mathcal{A}}_{0}^{*}[\hat{\Psi}_{i}]=(\mathcal{A}_0+\mathcal{P})^{*}[\hat{\Psi}_{i}]=\sum_{j=1}^{3}(\hat{\Psi}_{i},\hat{\Psi}_{j})\hat{\Phi}_{j}
={\delta_{ij}}\hat{\Phi}_{i}=\hat{\Phi}_{i}.\nonumber
\end{equation}
This completes the proof.
\end{proof}
In order to facilitate the presentation of our ideas in this part, we introduce the $3\times3$ matrices $\mathbf{P}$, $\mathbf{M}$ and $\mathbf{Q}$ as follows,
\begin{equation}
\mathbf{P}=(P_{ij})_{i,j=1}^{3},\quad\quad \mathbf{M}=(M_{ij})_{i,j=1}^{3},\quad\quad \mathbf{Q}=(Q_{ij})_{i,j=1}^{3}, \nonumber
\end{equation}
with
\begin{equation}\label{matrixQ1}
P_{ij}=\Big(\boldsymbol{\zeta}_i,\mathbf{K}_{\partial D,1}[\boldsymbol{\xi}_j]\Big),\quad\quad
M_{ij}=\Big(\boldsymbol{\zeta}_i,\mathbf{K}_{\partial D,2}[\boldsymbol{\xi}_j]\Big),\quad\quad
Q_{ij}=\Big(\hat{\mathbf{S}}_{\partial D}^{{\tau\omega} }[\boldsymbol{\zeta}_i],(\hat{\mathbf{S}}_{\partial D}^{{\omega},* })^{-1}[\boldsymbol{\xi}_j]\Big).
\end{equation}
Then, we can obtain the following Proposition.
\begin{prop}\label{matrices p and m}
The elements of the matrices $\mathbf{P}$ and $\mathbf{M}$ are given by
$$
P_{ij}=a_{\lambda,\mu}\int_{ \partial D}\boldsymbol{\zeta}_i(\mathbf{x})ds(\mathbf{x})\cdot \int_{ D}\boldsymbol{\xi}_j(\mathbf{x})d\mathbf{x},\quad\\
$$
$$
M_{ij}={b}_{\lambda,\mu}\int_{ \partial D}\boldsymbol{\zeta}_i(\mathbf{x})ds(\mathbf{x}) \cdot\int_{ D}\boldsymbol{\xi}_j(\mathbf{x})d\mathbf{x}
-\int_{ D} {\mathbf{S}}_{\partial D}[\boldsymbol{\zeta}_i](\mathbf{x}) \cdot \boldsymbol{\xi}_j(\mathbf{x}) d\mathbf{x},
$$
where the constants ${a}_{\lambda,\mu}$ and ${b}_{\lambda,\mu}$ are given by \eqref{lamea} and \eqref{lameb}, respectively.
\end{prop}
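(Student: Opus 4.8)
The plan is to reduce the boundary pairings defining $P_{ij}$ and $M_{ij}$ to volume integrals over $D$ by means of the first Betti (Green) identity for the Lam\'e operator, combined with Lemma~\ref{calculatelame} and the fact that each $\boldsymbol{\xi}_j$ is an infinitesimal rigid motion, so that the elastic energy pairing against it vanishes. The first step is to transfer the operators onto $\boldsymbol{\zeta}_i$: since $\mathbf{K}_{\partial D,i}$ denotes the $L^{2}(\partial D)^{2}$-adjoint of $\mathbf{K}_{\partial D,i}^{*}$ and $(\cdot,\cdot)$ is the bilinear boundary pairing $\int_{\partial D}(\cdot)\cdot(\cdot)\,ds$, we have
$$
P_{ij}=\big(\mathbf{K}_{\partial D,1}^{*}[\boldsymbol{\zeta}_i],\,\boldsymbol{\xi}_j\big),\qquad M_{ij}=\big(\mathbf{K}_{\partial D,2}^{*}[\boldsymbol{\zeta}_i],\,\boldsymbol{\xi}_j\big).
$$

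Next I would interpret $\mathbf{K}_{\partial D,i}^{*}[\boldsymbol{\zeta}_i]$ as a conormal derivative. Because $\boldsymbol{\Gamma_1}$ is a polynomial and $\boldsymbol{\Gamma_2}$ is a polynomial times $\ln|\mathbf{x}|$, both kernels lie in $C^{1,\alpha}_{\mathrm{loc}}$ for every $\alpha\in(0,1)$ and, crucially, $\mathcal{L}_{\lambda,\mu}\boldsymbol{\Gamma_i}$ carries no Dirac mass; hence the potentials $\mathbf{S}_{\partial D,i}[\boldsymbol{\zeta}_i]$ are $C^{1}$ across $\partial D$, no $\pm\tfrac12$ jump term appears, and $\mathbf{K}_{\partial D,i}^{*}[\boldsymbol{\zeta}_i](\mathbf{x})=\partial_{\boldsymbol{\nu}}\big(\mathbf{S}_{\partial D,i}[\boldsymbol{\zeta}_i]\big)(\mathbf{x})$ for $\mathbf{x}\in\partial D$, the conormal derivative being taken from inside $D$.

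Then I would apply the first Betti identity on $D$ to the pair $\big(\mathbf{S}_{\partial D,i}[\boldsymbol{\zeta}_i],\boldsymbol{\xi}_j\big)$, obtaining
$$
\int_{\partial D}\partial_{\boldsymbol{\nu}}\mathbf{S}_{\partial D,i}[\boldsymbol{\zeta}_i]\cdot\boldsymbol{\xi}_j\,ds=\int_{D}\mathcal{L}_{\lambda,\mu}\mathbf{S}_{\partial D,i}[\boldsymbol{\zeta}_i]\cdot\boldsymbol{\xi}_j\,d\mathbf{x}+\int_{D}\Big(\lambda(\nabla\cdot\mathbf{S}_{\partial D,i}[\boldsymbol{\zeta}_i])(\nabla\cdot\boldsymbol{\xi}_j)+2\mu\,\nabla^{s}\mathbf{S}_{\partial D,i}[\boldsymbol{\zeta}_i]:\nabla^{s}\boldsymbol{\xi}_j\Big)d\mathbf{x}.
$$
Since $\boldsymbol{\xi}_j\in\mathbf{\Psi}$ forces $\nabla^{s}\boldsymbol{\xi}_j=0$ and hence $\nabla\cdot\boldsymbol{\xi}_j=0$, the energy integral drops out, so $P_{ij}=\int_{D}\mathcal{L}_{\lambda,\mu}\mathbf{S}_{\partial D,1}[\boldsymbol{\zeta}_i]\cdot\boldsymbol{\xi}_j\,d\mathbf{x}$ and $M_{ij}=\int_{D}\mathcal{L}_{\lambda,\mu}\mathbf{S}_{\partial D,2}[\boldsymbol{\zeta}_i]\cdot\boldsymbol{\xi}_j\,d\mathbf{x}$. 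Differentiating under the integral sign and inserting Lemma~\ref{calculatelame} gives $\mathcal{L}_{\lambda,\mu}\mathbf{S}_{\partial D,1}[\boldsymbol{\zeta}_i](\mathbf{x})=a_{\lambda,\mu}\int_{\partial D}\boldsymbol{\zeta}_i(\mathbf{y})\,ds(\mathbf{y})$, a constant vector, and $\mathcal{L}_{\lambda,\mu}\mathbf{S}_{\partial D,2}[\boldsymbol{\zeta}_i](\mathbf{x})=b_{\lambda,\mu}\int_{\partial D}\boldsymbol{\zeta}_i(\mathbf{y})\,ds(\mathbf{y})-\mathbf{S}_{\partial D}[\boldsymbol{\zeta}_i](\mathbf{x})$; pulling the constant vector out of the $\mathbf{x}$-integral then reproduces exactly the two asserted identities.

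The main obstacle is the regularity bookkeeping attached to $\boldsymbol{\Gamma_2}$: one must verify that $\partial_{\boldsymbol{\nu}}\mathbf{S}_{\partial D,2}[\boldsymbol{\zeta}_i]$ genuinely has no boundary jump, and that $\mathcal{L}_{\lambda,\mu}$ may be commuted with the boundary integral and Betti's identity applied despite the logarithmically singular kernel $\boldsymbol{\Gamma}$ appearing in $\mathcal{L}_{\lambda,\mu}\boldsymbol{\Gamma_2}$. This turns out to be harmless in two dimensions because $\boldsymbol{\Gamma}\in L^{1}_{\mathrm{loc}}$, so $\mathbf{S}_{\partial D,2}[\boldsymbol{\zeta}_i]\in W^{2,1}_{\mathrm{loc}}$ and Betti's formula remains valid (by a density argument if desired); the remaining manipulations are routine computations with the explicit $\boldsymbol{\Gamma_1},\boldsymbol{\Gamma_2}$ and the rigid motions $\boldsymbol{\xi}_j$.
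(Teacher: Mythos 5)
Your proof is correct and follows essentially the same route as the paper: reduce the boundary pairing to a volume integral by a Green/Betti identity, exploit that each $\boldsymbol{\xi}_j$ is an infinitesimal rigid motion so the energy (and traction) terms drop, and then invoke Lemma~\ref{calculatelame} to evaluate $\mathcal{L}_{\lambda,\mu}\boldsymbol{\Gamma}_1$ and $\mathcal{L}_{\lambda,\mu}\boldsymbol{\Gamma}_2$. The only cosmetic difference is that the paper applies Green's second identity in the $\mathbf{y}$-variable inside $\mathbf{K}_{\partial D,i}[\boldsymbol{\xi}_j]=\int_{\partial D}\partial_{\boldsymbol{\nu}_{\mathbf{y}}}\boldsymbol{\Gamma}_i(\mathbf{x}-\mathbf{y})\boldsymbol{\xi}_j(\mathbf{y})\,ds(\mathbf{y})$ and then pairs with $\boldsymbol{\zeta}_i$, whereas you first dualize to $\mathbf{K}_{\partial D,i}^{*}[\boldsymbol{\zeta}_i]=\partial_{\boldsymbol{\nu}}\mathbf{S}_{\partial D,i}[\boldsymbol{\zeta}_i]$ and apply Betti's first identity in $\mathbf{x}$; these are mirror images of the same computation. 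Your extra remarks on the regularity of $\boldsymbol{\Gamma}_2$ and the absence of a boundary jump are sound and fill in details the paper leaves implicit.
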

\begin{proof}
By using Green's formula and Lemma \ref{calculatelame}, there holds
\begin{align}
\mathbf{K}_{\partial D,1}[\boldsymbol{\xi}_j](\mathbf{x})&=\int_{\partial D}\partial_{\boldsymbol{\nu}_{\mathbf{y}}} \boldsymbol{\Gamma_1}(\mathbf{x}-\mathbf{y})\boldsymbol{\xi}_j(\mathbf{y})d s(\mathbf{y})=\int_{ D} \mathcal{L}_{\lambda,\mu} \boldsymbol{\Gamma_1}(\mathbf{x}-\mathbf{y})\boldsymbol{\xi}_j(\mathbf{y})d\mathbf{y}\nonumber\\
&=a_{\lambda,\mu}\int_{ D}\boldsymbol{\xi}_j(\mathbf{y})d\mathbf{y},\nonumber
\end{align}
and
\begin{align}
&\mathbf{K}_{\partial D,2}[\boldsymbol{\xi}_j](\mathbf{x})=\int_{\partial D}\partial_{\boldsymbol{\nu}_{\mathbf{y}}} \boldsymbol{\Gamma_2}(\mathbf{x}-\mathbf{y})\boldsymbol{\xi}_j(\mathbf{y})d s(\mathbf{y})=\int_{ D} \mathcal{L}_{\lambda,\mu} \boldsymbol{\Gamma_2}(\mathbf{x}-\mathbf{y})\boldsymbol{\xi}_j(\mathbf{y})d\mathbf{y}\nonumber\\
&=b_{\lambda,\mu}\int_{ D}\boldsymbol{\xi}_j(\mathbf{y})d\mathbf{y}-\int_{D}\boldsymbol{\Gamma}(\mathbf{x}-\mathbf{y})\boldsymbol{\xi}_j(\mathbf{y})d\mathbf{y}.\nonumber
\end{align}
Therefore, we can get the proof from definition \eqref{matrixQ1} easily.
\end{proof}
The existence of sub-wavelength resonance frequencies is stated in the following theorem.
\begin{thm}\label{thmomega}
For $\delta\ll 1$, and $\epsilon\ll 1$, there exists three sub-wavelength resonance frequencies, counted with their multiplicities. The leading order terms are given by the roots of the following equation
\begin{equation}\label{formula1}
\det\Big(  \rho\omega^2\ln\omega\mathbf{P}+\rho\omega^2\left(\ln{(\sqrt{\rho}\tau)}\mathbf{P}
+\mathbf{M}\right)-\epsilon\mathbf{Q}\Big)=0,
\end{equation}
where the matrices $\mathbf{P}$, $\mathbf{M}$ and $\mathbf{Q}$ are given in Proposition \ref{matrices p and m} and equation \eqref{matrixQ1}, respectively. In fact, since $\mathbf{Q}$ is invertible when $\omega$ is small enough, equation \eqref{formula1} is third-order with respect to $\epsilon$, meaning that it has three roots $\omega_{i}(\epsilon)(1\leq i\leq3)$.
\end{thm}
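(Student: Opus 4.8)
The plan is to run the Gohberg--Sigal reduction \cite{SpectralAnalysis} for the operator-valued analytic function $\mathcal{A}(\omega,\delta)=\mathcal{A}_0+\mathcal{B}(\omega,\delta)$, turning the existence of characteristic values into a $3\times3$ determinant equation. Since $\hat{\mathcal{A}}_0=\mathcal{A}_0+\mathcal{P}$ is bijective in $\mathcal{L}(\mathcal{H},\mathcal{H}^1)$, I would first factor
\[
\mathcal{A}(\omega,\delta)=\hat{\mathcal{A}}_0+(\mathcal{B}(\omega,\delta)-\mathcal{P})=\hat{\mathcal{A}}_0\bigl(\mathcal{I}+\hat{\mathcal{A}}_0^{-1}(\mathcal{B}(\omega,\delta)-\mathcal{P})\bigr),
\]
so that the characteristic values of $\mathcal{A}(\omega,\delta)$ near $\omega=0$ coincide, with multiplicities, with those of $\mathcal{I}+\hat{\mathcal{A}}_0^{-1}(\mathcal{B}-\mathcal{P})$. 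By Remark \ref{regime_rem} we are in the regime $\delta\ln\omega\ll1$, where $\Vert\mathcal{B}(\omega,\delta)\Vert\ll1$, so $\mathcal{I}+\hat{\mathcal{A}}_0^{-1}\mathcal{B}$ is invertible in $\mathcal{L}(\mathcal{H})$ and
\[
\mathcal{I}+\hat{\mathcal{A}}_0^{-1}(\mathcal{B}-\mathcal{P})=(\mathcal{I}+\hat{\mathcal{A}}_0^{-1}\mathcal{B})\bigl(\mathcal{I}-(\mathcal{I}+\hat{\mathcal{A}}_0^{-1}\mathcal{B})^{-1}\hat{\mathcal{A}}_0^{-1}\mathcal{P}\bigr).
\]
Because $\mathcal{P}=\sum_{j=1}^{3}\mathcal{P}_j$ has rank $3$ and $\hat{\mathcal{A}}_0^{-1}[\hat{\Psi}_j]=\hat{\Phi}_j$, expanding a prospective kernel element in the linearly independent family $(\mathcal{I}+\hat{\mathcal{A}}_0^{-1}\mathcal{B})^{-1}[\hat{\Phi}_k]$ shows that $\mathcal{A}(\omega,\delta)$ is singular exactly when the $3\times3$ matrix $\mathcal{M}=(\mathcal{M}_{jk})$ with $\mathcal{M}_{jk}=((\mathcal{I}+\hat{\mathcal{A}}_0^{-1}\mathcal{B})^{-1}[\hat{\Phi}_k],\hat{\Phi}_j)$ satisfies $\det(\mathcal{M}-\mathbf{I}_3)=0$. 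Using the Neumann series $(\mathcal{I}+\hat{\mathcal{A}}_0^{-1}\mathcal{B})^{-1}=\mathcal{I}-\hat{\mathcal{A}}_0^{-1}\mathcal{B}+\mathcal{O}(\Vert\mathcal{B}\Vert^2)$, the adjoint relation $(\hat{\mathcal{A}}_0^{-1})^{*}[\hat{\Phi}_j]=(\hat{\mathcal{A}}_0^{*})^{-1}[\hat{\Phi}_j]=\hat{\Psi}_j$, and $(\hat{\Phi}_k,\hat{\Phi}_j)=\delta_{jk}$, one gets $\mathcal{M}_{jk}=\delta_{jk}-(\mathcal{B}(\omega,\delta)[\hat{\Phi}_k],\hat{\Psi}_j)+\mathcal{O}(\Vert\mathcal{B}\Vert^2)$, so that to leading order the resonance condition is $\det\bigl((\mathcal{B}(\omega,\delta)[\hat{\Phi}_i],\hat{\Psi}_j)\bigr)_{i,j=1}^{3}=0$.

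Next I would compute the nine pairings. Substituting the block expansion $\mathcal{B}(\omega,\delta)=\omega^2\ln\omega\,\mathcal{A}_{1,0}+\omega^2\mathcal{A}_{2,0}+\delta\,\mathcal{A}_{0,1}+\mathcal{O}(\delta\omega^2\ln\omega+\omega^4\ln\omega)$, the explicit block forms of $\mathcal{A}_{1,0},\mathcal{A}_{2,0},\mathcal{A}_{0,1}$, and $\hat{\Phi}_i=(\boldsymbol{\zeta}_i,0)^{T}$, $\hat{\Psi}_j=d_j\bigl(-(\hat{\mathbf{S}}_{\partial D}^{\omega,*})^{-1}[\boldsymbol{\xi}_j],\boldsymbol{\xi}_j\bigr)^{T}$: only the second component of $\mathcal{A}_{1,0}[\hat{\Phi}_i]$ and $\mathcal{A}_{2,0}[\hat{\Phi}_i]$ pairs against $\boldsymbol{\xi}_j$, while only the first component of $\mathcal{A}_{0,1}[\hat{\Phi}_i]=(\hat{\mathbf{S}}_{\partial D}^{\tau\omega}[\boldsymbol{\zeta}_i],0)^{T}$ pairs against $-(\hat{\mathbf{S}}_{\partial D}^{\omega,*})^{-1}[\boldsymbol{\xi}_j]$. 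Moving the Neumann--Poincar\'e operators onto the test functions via $(\mathbf{K}_{\partial D,i}^{*})^{*}=\mathbf{K}_{\partial D,i}$, replacing $\delta$ by $\tau^2\epsilon$ through \eqref{tau2}, and recalling the definitions \eqref{matrixQ1} of $\mathbf{P},\mathbf{M},\mathbf{Q}$, one obtains
\[
(\mathcal{B}(\omega,\delta)[\hat{\Phi}_i],\hat{\Psi}_j)=d_j\,\tau^2\Bigl(\rho\omega^2\ln\omega\,P_{ij}+\rho\omega^2\bigl(\ln(\sqrt{\rho}\tau)\,P_{ij}+M_{ij}\bigr)-\epsilon\,Q_{ij}\Bigr)+\mathcal{O}(\delta\omega^2\ln\omega+\omega^4\ln\omega).
\]
Since each scalar $d_j\tau^2$ is nonzero it factors out of the determinant, so the vanishing of the $3\times3$ determinant is precisely equation \eqref{formula1}; the closed forms of $P_{ij},M_{ij}$ in Proposition \ref{matrices p and m} follow from Green's formula and Lemma \ref{calculatelame}, but are not needed for \eqref{formula1} itself.

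For the multiplicity statement I would first check that $\mathbf{Q}$ is invertible for small $\omega$: expressing $\hat{\mathbf{S}}_{\partial D}^{\tau\omega}[\boldsymbol{\zeta}_i]$ and $(\hat{\mathbf{S}}_{\partial D}^{\omega,*})^{-1}[\boldsymbol{\xi}_j]$ via the matrix $\mathscr{C}=\mathbf{C}+\gamma_{\omega}\,\mathrm{diag}(1,1,0)$ of Proposition \ref{matrixC} and using $(\boldsymbol{\zeta}_i,\boldsymbol{\xi}_j)=\delta_{ij}$ gives $\mathbf{Q}=\mathbf{I}_3+\alpha_1(\ln\tau)\,\mathrm{diag}(1,1,0)\,\mathscr{C}^{-1}$; since $\gamma_{\omega}\to\infty$ as $\omega\to0$ while $C_{33}<0$ by \eqref{c33}, a Schur-complement estimate shows $\mathscr{C}$ stays invertible and $\mathbf{Q}\to\mathbf{I}_3$, hence $\det\mathbf{Q}\to1$. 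Writing $\mathbf{A}(\omega)=\rho\omega^2\ln\omega\,\mathbf{P}+\rho\omega^2(\ln(\sqrt{\rho}\tau)\mathbf{P}+\mathbf{M})\to0$, equation \eqref{formula1} is equivalent to $\epsilon$ being an eigenvalue of $\mathbf{Q}^{-1}\mathbf{A}(\omega)$, a degree-$3$ polynomial identity in $\epsilon$ with leading coefficient $-\det\mathbf{Q}\neq0$; the three eigenvalue branches of $\mathbf{Q}^{-1}\mathbf{A}(\omega)$ all vanish at $\omega=0$ and, to leading order, are nonzero multiples of $\omega^2\ln\omega$ or $\omega^2$ (hence strictly monotone near $0$), so inverting each branch produces a unique small root $\omega_i(\epsilon)\to0$. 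By the Gohberg--Sigal theory the total multiplicity of characteristic values is preserved under the reduction to $\det(\mathcal{M}-\mathbf{I}_3)=0$, so $\mathcal{A}(\omega,\delta)$ has exactly three characteristic values in the sub-wavelength regime, each of the form $\omega_i(\epsilon)(1+o(1))$, which is the assertion.

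The main obstacle is the entrywise error control in $\mathcal{M}$: one must verify that the $\mathcal{O}(\Vert\mathcal{B}\Vert^2)$ Neumann-series remainder and the $\mathcal{O}(\delta\omega^2\ln\omega+\omega^4\ln\omega)$ operator remainder are genuinely subdominant in each entry, so that the zero set of $\det(\mathcal{M}-\mathbf{I}_3)$ is governed by that of \eqref{formula1} and the $\omega_i(\epsilon)$ are the correct leading-order terms. This is delicate because $\omega^2\ln\omega$, $\omega^2$ and $\epsilon=\delta/\tau^2$ may all be comparable at a resonance, and because the rows of $\mathbf{P},\mathbf{M},\mathbf{Q}$ carry different orders --- since $\int_{\partial D}\boldsymbol{\zeta}_3=0$ the third row of $\mathbf{P}$ vanishes, which separates the monopole-type resonances (balancing $\omega^2\ln\omega$ against $\epsilon$) from the dipole-type one (balancing $\omega^2$ against $\epsilon$). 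The hypothesis $\delta\ln\omega\ll1$ of Remark \ref{regime_rem} is exactly what keeps $\mathcal{A}_{0,1}$ --- which contains $\gamma_{\tau\omega}\sim\ln\omega$ --- and hence the $\mathbf{Q}$-contribution, a legitimate small perturbation of $\mathcal{A}_0$; without it $\mathcal{A}(\omega,\delta)$ is invertible and no characteristic values arise.
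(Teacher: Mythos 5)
Your proposal is correct and follows essentially the same route as the paper: both rely on the Gohberg--Sigal reduction with the splitting $\mathcal{A}(\omega,\delta)=\mathcal{A}_0+\mathcal{B}(\omega,\delta)$, the rank-$3$ perturbation $\hat{\mathcal{A}}_0=\mathcal{A}_0+\mathcal{P}$, the duality $\hat{\mathcal{A}}_0^{*}[\hat{\Psi}_j]=\hat{\Phi}_j$, and the identification of the three pairings $(\mathcal{A}_{1,0}[\hat{\Phi}_i],\hat{\Psi}_j)$, $(\mathcal{A}_{2,0}[\hat{\Phi}_i],\hat{\Psi}_j)$, $(\mathcal{A}_{0,1}[\hat{\Phi}_i],\hat{\Psi}_j)$ with $\rho\tau^2 P_{ij}$, $\rho\tau^2(\ln(\sqrt{\rho}\tau)P_{ij}+M_{ij})$, and $-Q_{ij}$, after which replacing $\delta=\tau^2\epsilon$ and cancelling $d_j\tau^2$ yields \eqref{formula1}. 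The one place where you diverge usefully from the paper's exposition is the reduction to the $3\times3$ matrix: the paper writes an ansatz $\Phi_\delta=\Phi_0+\Phi_1$ and arrives at the quadratic form $\mathbf{c}^T[\,\cdot\,]\mathbf{\Lambda}\mathbf{c}=0$, leaving implicit that orthogonality of $\Phi_1$ to each $\hat{\Phi}_j$ individually is what upgrades a single scalar condition to the full singularity of a $3\times 3$ matrix; your factorization $\mathcal{A}=\hat{\mathcal{A}}_0(\mathcal{I}+\hat{\mathcal{A}}_0^{-1}\mathcal{B})(\mathcal{I}-(\mathcal{I}+\hat{\mathcal{A}}_0^{-1}\mathcal{B})^{-1}\hat{\mathcal{A}}_0^{-1}\mathcal{P})$ makes the determinant condition $\det(\mathcal{M}-\mathbf{I}_3)=0$ transparent and explicitly multiplicity-preserving in the Gohberg--Sigal sense. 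Your Schur-complement argument for the invertibility of $\mathbf{Q}$ via $\mathscr{C}=\mathbf{C}+\gamma_\omega\,\mathrm{diag}(1,1,0)$ and $C_{33}<0$ is also a welcome filling-in of a step the paper only asserts. So: same key ideas, slightly cleaner bookkeeping of the finite-rank reduction and of the $\mathbf{Q}$-invertibility claim.
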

\begin{proof}
We would like to find the characteristic values $\omega^{*} \ll 1$ such that there exits a nontrivial function $\Phi_{\delta}$ satisfying
\begin{equation}\label{Phidelta}
\mathcal{A}(\omega^{*}, \delta)[\Phi_{\delta}]=0.
\end{equation}
Lemma \ref{ker space2} shows that
$$
\mathcal{A}(0,0)[\Phi_{0}]=0,
$$ where $\Phi_{0}=\sum_{j=1}^{3} c_{j} \hat{\Phi}_{j}$ and the coefficients $c_{j}$ are arbitrary. Thus one can treat $\Phi_{\delta}$ as a perturbation of $\Phi_{0}$. We express the function $\Phi_{\delta}$ by
$$
\Phi_{\delta}=\Phi_{0}+\Phi_{1} \quad \text{with}\quad (\Phi_{1}, \Phi_{0})_{\mathcal{H}}=0.
$$
Using the definition of operator $\hat{\mathcal{A}}_{0}$ given in \eqref{hatA0}, the equation \eqref{Phidelta} is equivalent to
$$
(\hat{\mathcal{A}}_{0}+\mathcal{B}-\mathcal{P})[\Phi_{0}+\Phi_{1}]=0.
$$
By using the following fact $\mathcal{P}[\Phi_{0}]=\Psi_{0}:=\sum_{j=1}^{3} c_{j} \hat{\Psi}_{j}$, we have
\begin{align}\label{0}
0=(\Phi_{1},\Phi_{0})_{\mathcal{H}}&=\left((\hat{\mathcal{A}}_{0}+\mathcal{B})^{-1}\mathcal{P}[\Phi_{0}]-\Phi_{0},\Phi_{0}\right)_{\mathcal{H}} \nonumber\\
&=\left((\mathbf{I}_2-\hat{\mathcal{A}}_{0}^{-1}\mathcal{B}+(\hat{\mathcal{A}}_{0}^{-1} \mathcal{B})^{2}+\cdots){\Phi_{0}},{\Phi_{0}}\right)_{\mathcal{H}}-\Big(\Phi_{0}, \Phi_{0}\Big)_{\mathcal{H}} \nonumber \\
&=\left((-\hat{\mathcal{A}}_{0}^{-1}\mathcal{B}+(\hat{\mathcal{A}}_{0}^{-1}\mathcal{B})^{2}+\cdots)\Phi_{0},\Phi_{0}\right)_{\mathcal{H}}\nonumber\\
&=\left(-\hat{\mathcal{A}}_{0}^{-1}\mathcal{B}[\Phi_{0}],\Phi_{0}\right)+\mathcal{O}\left(\delta\omega^2\ln{\omega}+\omega^4\ln{\omega}\right),
\end{align}
where $\mathcal{B}=\omega^2\ln\omega\mathcal{A}_{1,0}+\omega^2\mathcal{A}_{2,0}+\delta\mathcal{A}_{0,1}
+\mathcal{O}(\delta\omega^2\ln{\omega}+\omega^4\ln{\omega})$. Moreover, let $\mathbf{\Lambda}$ be a diagonal matrix with $\Lambda_{ii}=d_i(1 \leq i \leq 3)$ and $\mathbf{c}=(c_1,c_2,c_3)^T$. Then, we have
\begin{itemize}
\item Calculation of $(\mathcal{A}_{1,0}[\Phi_0],\Psi_0)$.
\end{itemize}
\begin{align}\label{A10}
&(\mathcal{A}_{1,0}[\Phi_0],\Psi_0)=\sum_{i,j=1}^{3}c_{i}c_{j}d_{j}\rho\tau^2\Big(\boldsymbol{\zeta}_i,\mathbf{K}_{\partial D,1}[\boldsymbol{\xi}_j]\Big)
=\sum_{i,j=1}^{3} c_{i}c_{j}d_{j}\rho\tau^2 P_{ij}=\rho\tau^2\mathbf{c}^{T}\mathbf{P}\mathbf{\Lambda}\mathbf{c},
\end{align}
\begin{itemize}
\item Calculation of $(\mathcal{A}_{2,0}[\Phi_0],\Psi_0)$.
\end{itemize}
\begin{align}\label{A20}
&(\mathcal{A}_{2,0}[\Phi_0],\Psi_0)=\sum_{i,j=1}^{3}c_{i}c_{j}d_{j}\rho\tau^2 \Big[\ln{(\sqrt{\rho}\tau)}\Big(\boldsymbol{\zeta}_i,\mathbf{K}_{\partial D,1}[\boldsymbol{\xi}_j]\Big)+\Big(\boldsymbol{\zeta}_i,\mathbf{K}_{\partial D,2}[\boldsymbol{\xi}_j]\Big)\Big]
\nonumber\\
&=\sum_{i,j=1}^{3}c_{i}c_{j}d_{j}\rho\tau^2  \Big(\ln{(\sqrt{\rho}\tau)}P_{ij}+M_{ij} \Big)=\rho\tau^2\Big(\ln{(\sqrt{\rho}\tau)}\mathbf{c}^{T}\mathbf{P}\mathbf{\Lambda}\mathbf{c}+\mathbf{c}^{T}\mathbf{M}\mathbf{\Lambda}\mathbf{c}\Big).
\end{align}
\begin{itemize}
\item Calculation of $(\mathcal{A}_{0,1}[\Phi_0],\Psi_0)$.
\end{itemize}
\begin{align}\label{A01}
&(\mathcal{A}_{0,1}[\Phi_0],\Psi_0)=-\sum_{i,j=1}^{3}c_{i}c_{j}d_{j}\left(\hat{\mathbf{S}}_{\partial D}^{{\tau\omega} }[\boldsymbol{\zeta}_i],(\hat{\mathbf{S}}_{\partial D}^{{\omega},* })^{-1}[\boldsymbol{\xi}_j]\right)=-\sum_{i,j=1}^{3}c_{i}c_{j}d_{j}Q_{ij}=-\mathbf{c}^{T}\mathbf{Q}\mathbf{\Lambda}\mathbf{c}.
\end{align}
Therefore, combining \eqref{0}, \eqref{A10}, \eqref{A20}, and \eqref{A01}, there holds
\begin{align}
&\mathbf{c}^{T}\Big[  \rho\tau^2\omega^2\ln\omega\mathbf{P}+\rho\tau^2\omega^2\Big(\ln{(\sqrt{\rho}\tau)}\mathbf{P}
+\mathbf{M}\Big)-\delta\mathbf{Q}\Big] \mathbf{\Lambda}\mathbf{c}+\mathcal{O}(\delta\omega^2\ln{\omega}+\omega^4\ln{\omega})=0.\nonumber
\end{align}
The proof is completed.
\end{proof}

\subsection{$N$-nested resonators}In this subsection, we analyze the sub-wavelength resonances in multi-layer high-contrast elastic metamaterials~(i.e. $N$-nested resonators)~described by system \eqref{Lame system2}. First, the solution to system \eqref{Lame system2} can be written as
\begin{equation}\label{system solution2}
\mathbf{u}=
\begin{cases}
\mathbf{S}_{\Gamma_{1}^{+}}^{\omega}[\boldsymbol{\psi}_1^+](\mathbf{x})+\mathbf{u}^{i}, & \mathbf{x} \in \hat{D}_{0},\\
\tilde{\mathbf{S}}_{\Gamma_{j}^{+}}^{\omega}[\boldsymbol{\varphi}_j^+](\mathbf{x})+\tilde{\mathbf{S}}_{\Gamma_{j}^{-}}^{\omega}[\boldsymbol{\psi}_j^-](\mathbf{x}), & \mathbf{x} \in {D}_{j},1\leq j\leq N,\\
\mathbf{S}_{\Gamma_{j}^{-}}^{\omega}[\boldsymbol{\varphi}_j^-](\mathbf{x})+\mathbf{S}_{\Gamma_{j+1}^{+}}^{\omega}[\boldsymbol{\psi}_{j+1}^{+}](\mathbf{x}),& \mathbf{x} \in \hat{D}_{j},1\leq j\leq N-1,\\
\mathbf{S}_{\Gamma_{N}^{-}}^{\omega}[\boldsymbol{\varphi}_N^-](\mathbf{x}), & \mathbf{x} \in \hat{D}_{N},
\end{cases}
\end{equation}
where the density functions satisfy the following boundary integral equations,
\begin{equation}
\mathcal{A}(\omega, \delta)[\Phi]=\mathcal{F},\nonumber
\end{equation}
on $\mathcal{H}:=L^2(\Gamma_1^{+})^2\times L^2(\Gamma_1^{-})^2\times\cdots\times L^2(\Gamma_N^{+})^2\times L^2(\Gamma_N^{-})^2$. The $4N$-by-$4N$ matrix type operator $\mathcal{A}(\omega,\delta)$ has the triple diagonal block form
\begin{equation}\label{block diagonal2}
\begin{aligned}
\mathcal{A}(\omega,\delta):&=\text{diag}\left(\mathbf{T},\mathbf{M},\mathbf{N}\right)=
\begin{pmatrix}
\mathbf{M}_{{\Gamma_1^{+}}} &\mathbf{N}_{\Gamma_1^{+},\Gamma_1^{-}} & 0& \cdots & 0&0\\
 \mathbf{T}_{\Gamma_1^{-},\Gamma_1^{+}}& \mathbf{M}_{{\Gamma_1^{-}}}&  \mathbf{N}_{\Gamma_1^{-},\Gamma_2^{+}}&  \cdots & 0 & 0\\
 0& \mathbf{T}_{{\Gamma_2^{+}},{\Gamma_1^{-}}}&\mathbf{M}_{{\Gamma_2^{+}}}&
 \cdots&0&0\\
\vdots & \vdots & \vdots & \ddots & \vdots & \vdots \\
 0&0&0& \cdots &\mathbf{M}_{{\Gamma_N^{+}}} & \mathbf{N}_{{\Gamma_N^{+}},{\Gamma_N^{-}}} \\
  0&0&0& \cdots &\mathbf{T}_{{\Gamma_N^{-}},{\Gamma_N^{+}}}& \mathbf{M}_{{\Gamma_N^{-}}}
\end{pmatrix},
\end{aligned}
\end{equation}
and
$$
\Phi=\left(\boldsymbol{\psi}_{1}^+,\boldsymbol{\varphi}_{1}^+,\boldsymbol{\psi}_{1}^-,\boldsymbol{\varphi}_{1}^-,
\boldsymbol{\psi}_{2}^+,\boldsymbol{\varphi}_{2}^+,\boldsymbol{\psi}_{2}^-,\boldsymbol{\varphi}_{2}^-,
\cdots,\boldsymbol{\psi}_{N}^+,\boldsymbol{\varphi}_{N}^+,\boldsymbol{\psi}_{N}^-,\boldsymbol{\varphi}_{N}^-,\right)^T,\quad \mathcal{F}=\left(\mathbf{u}^{i},\partial_{\boldsymbol{\nu}} \mathbf{u}^{i},0,\cdots,0,0\right)^T,
$$
where
\begin{equation}
\mathbf{M}_{{\Gamma_j^+}}=
\begin{pmatrix}
-\mathbf{S}_{\Gamma_{j}^+}^{\omega} &\tilde{\mathbf{S}}_{\Gamma_{j}^+}^{\omega}  \\
-(\frac{\mathbf{I}}{2}+\mathbf{K}_{\Gamma_{j}^+}^{\omega,*}) &-\frac{\mathbf{I}}{2}+\tilde{\mathbf{K}}_{\Gamma_{j}^+}^{\omega,*}
\end{pmatrix},\quad
\mathbf{M}_{{\Gamma_j^-}}=
\begin{pmatrix}
-\tilde{\mathbf{S}}_{\Gamma_{j}^-}^{\omega} &{\mathbf{S}}_{\Gamma_{j}^-}^{\omega}  \\
-(\frac{\mathbf{I}}{2}+\tilde{\mathbf{K}}_{\Gamma_{j}^-}^{\omega,*}) &-\frac{\mathbf{I}}{2}+{\mathbf{K}}_{\Gamma_{j}^-}^{\omega,*}
\end{pmatrix},\,1\leq j\leq N,\nonumber
\end{equation}
and
\begin{equation}
\mathbf{N}_{{\Gamma_{j}^+},{\Gamma_{j}^-}}=
\begin{pmatrix}
\tilde{\mathbf{S}}_{\Gamma_{j}^+,\Gamma_{j}^-}^{\omega} &0 \\
\tilde{\mathbf{K}}_{\Gamma_{j}^+,\Gamma_{j}^-}^{\omega,*}&0
\end{pmatrix},\quad \quad
\mathbf{N}_{{\Gamma_{j}^-},{\Gamma_{j+1}^+}}=
\begin{pmatrix}
{\mathbf{S}}_{\Gamma_{j}^-,\Gamma_{j+1}^+}^{\omega} &0 \\
{\mathbf{K}}_{\Gamma_{j}^-,\Gamma_{j+1}^+}^{\omega,*}&0
\end{pmatrix},\,1\leq j\leq N,\nonumber
\end{equation}

\begin{equation}
\mathbf{T}_{{\Gamma_{j}^-},{\Gamma_{j}^+}}=
\begin{pmatrix}
0& -\tilde{\mathbf{S}}_{\Gamma_{j}^-,\Gamma_{j}^+}^{\omega}  \\
0& -\tilde{\mathbf{K}}_{\Gamma_{j}^-,\Gamma_{j}^+}^{\omega,*}
\end{pmatrix},\quad \quad
\mathbf{T}_{{\Gamma_{j}^+},{\Gamma_{j-1}^-}}=
\begin{pmatrix}
0& -{\mathbf{S}}_{\Gamma_{j}^+,\Gamma_{j-1}^-}^{\omega}  \\
0& -{\mathbf{K}}_{\Gamma_{j}^+,\Gamma_{j-1}^-}^{\omega,*}
\end{pmatrix},\,1\leq j\leq N.\nonumber
\end{equation}
In this part, we define
\begin{equation}
\mathbf{S}_{\Gamma_{k},\Gamma_{i}}^{\omega}[\boldsymbol{\varphi}](\mathbf{x}):=\int_{\Gamma_{i}}\boldsymbol{\Gamma}^{\omega}(\mathbf{x}-\mathbf{y}) \boldsymbol{\varphi}(\mathbf{y}) d s(\mathbf{y}), \quad \mathbf{x} \in \Gamma_{k}.\nonumber
\end{equation}
In addition,
\begin{equation}
\mathbf{K}_{\Gamma_{k},\Gamma_{i}}^{\omega,*}[\boldsymbol{\varphi}](\mathbf{x}):=\int_{\Gamma_{i}} \partial_{\boldsymbol{\nu}_{\mathbf{x}}} \boldsymbol{\Gamma}^{\omega}(\mathbf{x}-\mathbf{y}) \boldsymbol{\varphi}(\mathbf{y}) d s(\mathbf{y}),\quad \mathbf{x} \in \Gamma_{k}.\nonumber
\end{equation}
Set
\begin{equation}\label{single layer potential}
\mathbf{S}_{\Gamma_{i}}^{\omega}:=\mathbf{S}_{\Gamma_{i},\Gamma_{i}}^{\omega},\quad \quad \mathbf{K}_{\Gamma_{i}}^{\omega,*}:=\mathbf{K}_{\Gamma_{i},\Gamma_{i}}^{\omega,*}.\nonumber
\end{equation}

Next, we aim to solve the kernel space of the operator $\mathcal{A}(\omega,\delta)$  under the limit state $\delta=\omega=0$, i.e.  the kernel space of $\mathcal{A}_0:=\mathcal{A}(0,0)$. Specifically,
\begin{equation}\label{TMN}
\mathcal{A}_0=\text{diag}\left(\mathcal{T},\mathcal{M},\mathcal{N}\right),\quad
\mathcal{A}_0^*=\text{diag}\left(\mathcal{T}^*,\mathcal{M}^*,\mathcal{N}^*\right),
\end{equation}
where
\begin{equation}
\mathcal{M}_{{\Gamma_j^+}}=
\begin{pmatrix}
-\hat{\mathbf{S}}_{\Gamma_{j}^+}^{\omega} &0  \\
-(\frac{\mathbf{I}}{2}+\mathbf{K}_{\Gamma_{j}^+}^{*}) &-\frac{\mathbf{I}}{2}+{\mathbf{K}}_{\Gamma_{j}^+}^{*}
\end{pmatrix},\quad
\mathcal{M}_{{\Gamma_j^-}}=
\begin{pmatrix}
0 &\hat{{\mathbf{S}}}_{\Gamma_{j}^-}^{\omega}  \\
-(\frac{\mathbf{I}}{2}+{\mathbf{K}}_{\Gamma_{j}^-}^{*}) &-\frac{\mathbf{I}}{2}+{\mathbf{K}}_{\Gamma_{j}^-}^{*}
\end{pmatrix},\,1\leq j\leq N,\nonumber
\end{equation}
\begin{equation}
\mathcal{M}^{*}_{{\Gamma_j^+}}=
\begin{pmatrix}
-\hat{\mathbf{S}}_{\Gamma_{j}^+}^{\omega,*} &-(\frac{\mathbf{I}}{2}+\mathbf{K}_{\Gamma_{j}^+})  \\
0 &-\frac{\mathbf{I}}{2}+{\mathbf{K}}_{\Gamma_{j}^+}
\end{pmatrix},\quad
\mathcal{M}^{*}_{{\Gamma_j^-}}=
\begin{pmatrix}
0 & -(\frac{\mathbf{I}}{2}+{\mathbf{K}}_{\Gamma_{j}^-})  \\
\hat{{\mathbf{S}}}_{\Gamma_{j}^-}^{\omega,*} &-\frac{\mathbf{I}}{2}+{\mathbf{K}}_{\Gamma_{j}^-}
\end{pmatrix},\,1\leq j\leq N,\nonumber
\end{equation}
and
\begin{equation}
\mathcal{N}_{{\Gamma_{j}^+},{\Gamma_{j}^-}}=
\begin{pmatrix}
0 &0 \\
{\mathbf{K}}_{\Gamma_{j}^+,\Gamma_{j}^-}^{*}&0
\end{pmatrix},\quad \quad
\mathcal{N}_{{\Gamma_{j}^-},{\Gamma_{j+1}^+}}=
\begin{pmatrix}
\hat{\mathbf{S}}_{\Gamma_{j}^-,\Gamma_{j+1}^+}^{\omega} &0 \\
{\mathbf{K}}_{\Gamma_{j}^-,\Gamma_{j+1}^+}^{*}&0
\end{pmatrix},\,1\leq j\leq N,\nonumber
\end{equation}
\begin{equation}
\mathcal{N}^{*}_{{\Gamma_{j}^-},{\Gamma_{j}^+}}=
\begin{pmatrix}
0 &{\mathbf{K}}_{\Gamma_{j}^-,\Gamma_{j}^+} \\
0&0
\end{pmatrix},\quad \quad
\mathcal{N}^{*}_{\Gamma_{j+1}^+,{\Gamma_{j}^-}}=
\begin{pmatrix}
\hat{\mathbf{S}}_{\Gamma_{j+1}^+,\Gamma_{j}^-}^{\omega,*} &{\mathbf{K}}_{\Gamma_{j+1}^+,\Gamma_{j}^-} \\
0&0
\end{pmatrix},\,1\leq j\leq N,\nonumber
\end{equation}
and
\begin{equation}
\mathcal{T}_{{\Gamma_{j}^-},{\Gamma_{j}^+}}=
\begin{pmatrix}
0& 0 \\
0& -{\mathbf{K}}_{\Gamma_{j}^-,\Gamma_{j}^+}^{*}
\end{pmatrix},\quad \quad
\mathcal{T}_{{\Gamma_{j}^+},{\Gamma_{j-1}^-}}=
\begin{pmatrix}
0& -\hat{\mathbf{S}}_{\Gamma_{j}^+,\Gamma_{j-1}^-}^{\omega}  \\
0& -{\mathbf{K}}_{\Gamma_{j}^+,\Gamma_{j-1}^-}^{*}
\end{pmatrix},\,1\leq j\leq N.\nonumber
\end{equation}
\begin{equation}
\mathcal{T}^{*}_{{\Gamma_{j}^+},{\Gamma_{j}^-}}=
\begin{pmatrix}
0& 0 \\
0& -{\mathbf{K}}_{\Gamma_{j}^+,\Gamma_{j}^-}
\end{pmatrix},\quad \quad
\mathcal{T}^{*}_{{\Gamma_{j-1}^-},{\Gamma_{j}^+}}=
\begin{pmatrix}
0&  0 \\
-\hat{\mathbf{S}}_{\Gamma_{j-1}^-,\Gamma_{j}^+}^{\omega,*}& -{\mathbf{K}}_{\Gamma_{j-1}^-,\Gamma_{j}^+}
\end{pmatrix},\,1\leq j\leq N.\nonumber
\end{equation}

We can check that the operators $\mathcal{A}_0$ and $\mathcal{A}^{*}_0$, as defined in \eqref{TMN}, have the following kernel spaces, which is an extension of Lemma \ref{ker space2}.
\begin{lem}\label{ker space}
We have
$$
\operatorname{ker} \mathcal{A}_0=\operatorname{span}
\left\{\boldsymbol{\phi}_{1,j},\boldsymbol{\phi}_{2,j},\cdots,\boldsymbol{\phi}_{N,j}\right\},
$$
where the $(4i-2)$-th entry of $\boldsymbol{\phi_}{i,j}$ is represented by $\boldsymbol{\zeta}_j$, denoted as
\begin{equation}\label{oddphi}
\boldsymbol{\phi_}{i,j}=\left(0,\cdots,0,\boldsymbol{\zeta}_j,0,\cdots,0\right)^T,\ 1\leq i\leq {N},\ 1\leq j \leq 3.\nonumber
\end{equation}
Moreover,
$$
\operatorname{ker} \mathcal{A}^*_0=
\operatorname{span}\left\{\boldsymbol{\varphi}_{1,j},\boldsymbol{\varphi}_{2,j},\cdots,\boldsymbol{\varphi}_{N,j}\right\},
$$
where the $(4i-2)$-th and $4i$-th entries are represented by $\boldsymbol{\xi}_j$, denoted as
$$
\quad\quad  \boldsymbol{\varphi_}{i,j}=(0,\cdots,0, \varsigma_{4i-5}^{j},0,\varsigma_{4i-3}^{j},\boldsymbol{\xi}_j,\varsigma_{4i-1}^{j},\boldsymbol{\xi}_j,\varsigma_{4i+1}^{j},0,\cdots,0)^T,\ 1< i< {N-1},\ 1\leq j \leq 3.
$$
Here, the functions $\varsigma_{4i-5}^{j},\varsigma_{4i-3}^{j}$ and $\varsigma_{4i-1}^{j},\varsigma_{4i+1}^{j}$ satisfy the following system
\begin{equation}\label{Dirichlet systems1}
\begin{cases}
\hat{\mathbf{S}}_{\Gamma_{i-1}^{-}}^{\omega,*}[\varsigma_{4i-5}^{j}]-\hat{\mathbf{S}}_{\Gamma_{i}^{+}}^{\omega,*}[\varsigma_{4i-3}^{j}]=0& \text{on}\ \Gamma_{i-1}^{-},\\
\hat{\mathbf{S}}_{\Gamma_{i-1}^{-}}^{\omega,*}[\varsigma_{4i-5}^{j}]-\hat{\mathbf{S}}_{\Gamma_{i}^{+}}^{\omega,*}[\varsigma_{4i-3}^{j}]=\boldsymbol{\xi}_{j}& \text{on}\ \Gamma_{i}^{+},
\end{cases}\quad
\begin{cases}
\hat{\mathbf{S}}_{\Gamma_{i}^{-}}^{\omega,*}[\varsigma_{4i-1}^{j}]-\hat{\mathbf{S}}_{\Gamma_{i+1}^{+}}^{\omega,*}[\varsigma_{4i+1}^{j}]=0& \text{on}\ \Gamma_{i}^{-},\\
\hat{\mathbf{S}}_{\Gamma_{i}^{-}}^{\omega,*}[\varsigma_{4i-1}^{j}]-\hat{\mathbf{S}}_{\Gamma_{i+1}^{+}}^{\omega,*}[\varsigma_{4i+1}^{j}]=-\boldsymbol{\xi}_{j}& \text{on}\ \Gamma_{i+1}^{+}.
\end{cases}\nonumber
\end{equation}
Especially for $N=1$, $\boldsymbol{\varphi_}{1,j}=(\varsigma_{1}^{j},\boldsymbol{\xi}_j,0,\boldsymbol{\xi}_j)^T$, and
$$
\boldsymbol{\varphi_}{1,j}=(\varsigma_{1}^{j},\boldsymbol{\xi}_j,\varsigma_{3}^{j},\boldsymbol{\xi}_j,\varsigma_{5}^{j},0,\cdots,0)^T,\ N\geq2,
$$
with $\varsigma_{1}^{j}=-(\hat{\mathbf{S}}_{\Gamma_{1}^{+}}^{\omega,*})^{-1}[\boldsymbol{\xi}_j]$, and
\begin{equation}
\begin{cases}
\hat{\mathbf{S}}_{\Gamma_{1}^{-}}^{\omega,*}[\varsigma_{3}^{j}]-\hat{\mathbf{S}}_{\Gamma_{2}^{+}}^{\omega,*}[\varsigma_{5}^{j}]=0& \text{on}\ \Gamma_{1}^{-},\\
\hat{\mathbf{S}}_{\Gamma_{1}^{-}}^{\omega,*}[\varsigma_{3}^{j}]-\hat{\mathbf{S}}_{\Gamma_{2}^{+}}^{\omega,*}[\varsigma_{5}^{j}]=-\boldsymbol{\xi}_{j}& \text{on}\ \Gamma_{2}^{+}.
\end{cases}\nonumber
\end{equation}
In addition, $\boldsymbol{\varphi_}{N,j}=(0,\cdots,0,\varsigma_{4N-5}^{j},0,\varsigma_{4N-3}^{j},\boldsymbol{\xi}_j,0,\boldsymbol{\xi}_j)^T$ with
\begin{equation}
\begin{cases}
\hat{\mathbf{S}}_{\Gamma_{N-1}^{-}}^{\omega,*}[\varsigma_{4N-5}^{j}]-\hat{\mathbf{S}}_{\Gamma_{N}^{+}}^{\omega,*}[\varsigma_{4N-3}^{j}]=0&\text{on}\ \Gamma_{N-1}^{-},\\
\hat{\mathbf{S}}_{\Gamma_{N-1}^{-}}^{\omega,*}[\varsigma_{4N-5}^{j}]-\hat{\mathbf{S}}_{\Gamma_{N}^{+}}^{\omega,*}[\varsigma_{4N-3}^{j}]=\boldsymbol{\xi}_{j}& \text{on}\ \Gamma_{N}^{+}.
\end{cases}\nonumber
\end{equation}
\end{lem}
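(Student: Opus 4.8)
The plan is to adapt the proof of Lemma~\ref{ker space2} to the block--tridiagonal operator \eqref{TMN}. I first check that each $\boldsymbol{\phi}_{i,j}$ lies in $\operatorname{ker}\mathcal{A}_0$. Since $\boldsymbol{\phi}_{i,j}$ has its only nonzero entry $\boldsymbol{\zeta}_j$ in the $\boldsymbol{\varphi}_i^{+}$--slot (the $(4i-2)$-th), applying $\mathcal{A}_0$ produces at most two nonzero entries: the term $\bigl(-\tfrac12\mathbf{I}_2+\mathbf{K}^{*}_{\Gamma_i^{+}}\bigr)[\boldsymbol{\zeta}_j]$ from the traction row of the diagonal block $\mathcal{M}_{\Gamma_i^{+}}$, and the cross term $-\mathbf{K}^{*}_{\Gamma_i^{-},\Gamma_i^{+}}[\boldsymbol{\zeta}_j]$ from the traction row of $\mathcal{T}_{\Gamma_i^{-},\Gamma_i^{+}}$. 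The first vanishes because $\boldsymbol{\zeta}_j\in\mathbf{\Psi}^{*}=\operatorname{ker}\bigl(-\tfrac12\mathbf{I}_2+\mathbf{K}^{*}_{\Gamma_i^{+}}\bigr)$ by Lemma~\ref{kernelspace}. For the second, note that $v:=\mathbf{S}_{\Gamma_i^{+}}[\boldsymbol{\zeta}_j]$ satisfies $\mathcal{L}_{\lambda,\mu}v=0$ inside $\Gamma_i^{+}$ with $\partial_{\boldsymbol{\nu}}v|_{-}=\bigl(-\tfrac12\mathbf{I}_2+\mathbf{K}^{*}_{\Gamma_i^{+}}\bigr)[\boldsymbol{\zeta}_j]=0$; by uniqueness for the interior Neumann problem of the Lam\'e system (cf.\ Lemma~\ref{Xi}), $v$ is a rigid motion throughout the region enclosed by $\Gamma_i^{+}$, so $\nabla^{s}v\equiv0$ there, and since $\Gamma_i^{-}$ lies inside $\Gamma_i^{+}$ one gets $\mathbf{K}^{*}_{\Gamma_i^{-},\Gamma_i^{+}}[\boldsymbol{\zeta}_j]=\partial_{\boldsymbol{\nu}}v|_{\Gamma_i^{-}}=0$. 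Hence $\mathcal{A}_0\boldsymbol{\phi}_{i,j}=0$, and the $3N$ vectors are plainly independent.

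To see they exhaust $\operatorname{ker}\mathcal{A}_0$, I solve $\mathcal{A}_0\Phi=0$ directly. The continuity (single--layer) rows involve only the $\hat{\mathbf{S}}^{\omega}$--type operators acting on the host-side densities $\boldsymbol{\psi}_j^{+},\boldsymbol{\varphi}_j^{-}$, the tilde single--layer operators having dropped out at $\delta=0$; grouped over the two components $\Gamma_j^{-}$ and $\Gamma_{j+1}^{+}$ of $\partial\hat{D}_j$ (and over the single curves $\Gamma_1^{+}$ and $\Gamma_N^{-}$) they state that the single--layer potential of the pair $(\boldsymbol{\varphi}_j^{-},\boldsymbol{\psi}_{j+1}^{+})$ has vanishing Dirichlet trace on $\partial\hat{D}_j$. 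Invertibility of $\hat{\mathbf{S}}^{\omega}$ on these curves, together with interior/exterior uniqueness for the static Lam\'e system, then forces all host-side densities to vanish. With $\boldsymbol{\psi}_j^{+}=\boldsymbol{\varphi}_j^{-}=0$ the remaining traction rows decouple into, for each $1\le j\le N$, the equation $\bigl(-\tfrac12\mathbf{I}_2+\mathbf{K}^{*}_{\partial D_j}\bigr)\bigl[(\boldsymbol{\varphi}_j^{+},\boldsymbol{\psi}_j^{-})\bigr]=0$ on $\partial D_j=\Gamma_j^{+}\cup\Gamma_j^{-}$, whose solution space is $\mathbf{\Psi}^{*}_{\partial D_j}$, of dimension $3$ (the $\dim\mathbf{\Psi}^{*}=3$ fact applied to the doubly--connected $D_j$). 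As the three independent vectors $(\boldsymbol{\zeta}_j,0)$ already lie in this space they span it, and assembling over $j$ yields $\operatorname{ker}\mathcal{A}_0=\operatorname{span}\{\boldsymbol{\phi}_{i,j}\}$.

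The analysis of $\operatorname{ker}\mathcal{A}_0^{*}$ is dual. Its traction rows give, on each $\partial D_j$, the equation $\bigl(-\tfrac12\mathbf{I}_2+\mathbf{K}_{\partial D_j}\bigr)[\,\cdot\,]=0$, whose kernel is the space of traces of rigid motions $\mathbf{\Psi}_{\partial D_j}=\operatorname{span}\{\boldsymbol{\xi}_j\}$ by Lemma~\ref{Xi}; this puts $\boldsymbol{\xi}_j$ in the $(4i-2)$-th and $4i$-th slots of $\boldsymbol{\varphi}_{i,j}$. Substituting into the single--layer rows of $\mathcal{A}_0^{*}$ and using $\bigl(\tfrac12\mathbf{I}_2+\mathbf{K}_{\Gamma}\bigr)[\boldsymbol{\xi}_j]=\boldsymbol{\xi}_j$, one finds that the host-side components $\varsigma^{j}$ must solve precisely the Dirichlet--type boundary systems displayed in the statement, one on each $\partial\hat{D}_{i-1}$ and $\partial\hat{D}_i$ — degenerating for $N=1$ to $\varsigma_1^{j}=-(\hat{\mathbf{S}}^{\omega,*}_{\Gamma_1^{+}})^{-1}[\boldsymbol{\xi}_j]$, in agreement with Lemma~\ref{ker space2}(ii). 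These systems are uniquely solvable provided $\hat{\mathbf{S}}^{\omega,*}$ is invertible on the relevant curves, and a Fredholm count (the cross--interaction operators being smoothing) shows there are no further elements of the kernel; hence $\dim\operatorname{ker}\mathcal{A}_0^{*}=3N$ with the asserted basis.

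The crux — beyond routine bookkeeping of signs and orientations in the block expansion — is the invertibility of $\hat{\mathbf{S}}^{\omega}$ and $\hat{\mathbf{S}}^{\omega,*}$ on the \emph{disconnected} curves $\partial\hat{D}_j=\Gamma_j^{-}\cup\Gamma_{j+1}^{+}$, and the consequent unique solvability of the systems defining the $\varsigma^{j}$, since the invertibility result of Section~2 is stated only for a connected boundary. I would recover it by re-running the argument behind Proposition~\ref{matrixC} for the doubly--connected host region $\hat{D}_j$: one studies the relation matrix $\mathbf{C}$ between the kernels of $-\tfrac12\mathbf{I}_2+\mathbf{K}_{\partial\hat{D}_j}$ and $-\tfrac12\mathbf{I}_2+\mathbf{K}^{*}_{\partial\hat{D}_j}$, and shows that letting $\gamma_{\omega}\to\infty$, together with the symmetry of $\mathbf{C}$ and the negativity $(\mathbf{S}_{\partial\hat{D}_j}[\boldsymbol{\zeta}_3],\boldsymbol{\zeta}_3)<0$ of the rotational mode, rules out any degeneracy exactly as in the simply--connected case. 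The delicate point throughout is controlling the logarithmic growth at infinity of the two--dimensional single--layer potential, which is precisely what makes $\hat{\mathbf{S}}^{\omega}$ rather than $\mathbf{S}$ the right object and what the exterior uniqueness statements used above rely on.
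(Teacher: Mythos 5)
The paper does not actually prove Lemma~\ref{ker space}: it merely asserts that ``we can check'' the result as an extension of Lemma~\ref{ker space2}. So there is no paper proof to compare against, and your proposal is filling a real gap.

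Your verification that each $\boldsymbol{\phi}_{i,j}$ lies in $\operatorname{ker}\mathcal{A}_0$ is correct, and the key step---that $\mathbf{K}^{*}_{\Gamma_i^{-},\Gamma_i^{+}}[\boldsymbol{\zeta}_j]=0$ because $\mathbf{S}_{\Gamma_i^{+}}[\boldsymbol{\zeta}_j]$ is a rigid motion throughout the region bounded by $\Gamma_i^{+}$---is exactly the right observation; it is the only nontrivial cancellation coming from the lower-off-diagonal blocks $\mathcal{T}_{\Gamma_i^{-},\Gamma_i^{+}}$. Your dual computation for $\boldsymbol{\varphi}_{i,j}$, identifying the traction rows of $\mathcal{A}_0^{*}$ with $\bigl(-\tfrac12\mathbf{I}_2+\mathbf{K}_{\partial D_j}\bigr)$ and the single-layer rows with the Dirichlet transmission systems for the $\varsigma^{j}$, is the natural line of argument. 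One caveat: the independence of the $3N$ vectors and the identity $\dim\operatorname{ker}\mathcal{A}_0=\dim\operatorname{ker}\mathcal{A}_0^{*}$ via Fredholm index zero should be said explicitly, since your ``Fredholm count'' sentence packs both into one clause.

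On the gap you flag about invertibility over the disconnected curves $\partial\hat{D}_j=\Gamma_j^{-}\cup\Gamma_{j+1}^{+}$: the concern is legitimate, since the paper's invertibility theorem for $\hat{\mathbf{S}}^{\omega}$ is stated only for a connected boundary. But for the completeness of $\operatorname{ker}\mathcal{A}_0$ you do not actually need the disconnected-boundary version. The Dirichlet rows say that the static single-layer potential (with the $\gamma_{\omega}$ correction) of the combined host densities vanishes on both components of $\partial\hat{D}_j$. Interior Dirichlet uniqueness in the annulus $\hat{D}_j$ and inside $\Gamma_{j+1}^{+}$ then forces the potential to vanish on both sides of $\Gamma_{j+1}^{+}$, so the traction jump gives $\boldsymbol{\psi}_{j+1}^{+}=0$ directly. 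Once $\boldsymbol{\psi}_{j+1}^{+}=0$, the remaining Dirichlet row reduces to $\hat{\mathbf{S}}^{\omega}_{\Gamma_j^{-}}[\boldsymbol{\varphi}_j^{-}]=0$ on the single connected curve $\Gamma_j^{-}$, and the paper's existing theorem kills $\boldsymbol{\varphi}_j^{-}$. The same sequential reduction handles $\boldsymbol{\psi}_1^{+}$ and $\boldsymbol{\varphi}_N^{-}$ at the two ends. For $\operatorname{ker}\mathcal{A}_0^{*}$, the solvability of the $\varsigma$-systems does require the operator on the disconnected boundary, but there uniqueness (proved exactly as above) plus Fredholm index zero gives existence, which is lighter than re-running the whole matrix-$\mathbf{C}$ argument you propose, though that route would also work.
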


With the help of matrix $\mathcal{A}(\omega,\delta)$ defined in \eqref{block diagonal2} and Lemma \ref{ker space}, we now present the proof of Theorem \ref{multifrequencies}.
\begin{proof}[Proof of Theorem \ref{multifrequencies}]
	Using the same approach of Theorem \ref{thmomega}, we can directly obtain $(\mathcal{A}_{1,0}[\Phi_0],\Psi_0)$, $(\mathcal{A}_{2,0}[\Phi_0],\Psi_0)$ and $(\mathcal{A}_{0,1}[\Phi_0],\Psi_0)$, where operators $\mathcal{A}_{1,0}$, $\mathcal{A}_{2,0}$, and $\mathcal{A}_{0,1}$ are in the form of triple diagonal blocks such as $\mathcal{A}_0$ and $\mathcal{A}^{*}_0$, we ignore them here to avoid repetition, and
	\begin{equation}
		\Phi_0=\sum_{i=1}^{N}\sum_{j=1}^3 c_{i,j}\boldsymbol{\phi}_{i,j},\quad \Psi_0=\sum_{i=1}^{N}\sum_{j=1}^3 c_{i,j}\boldsymbol{\varphi}_{i,j}.	\notag
	\end{equation}
Moreover, we introduce the following matrices
	\begin{equation}\label{matriceshat1}
		\hat{\mathbf{P}}=(\hat{P}_{ij})_{i,j=1}^{3},\quad\quad \hat{\mathbf{M}}=(\hat{M}_{ij})_{i,j=1}^{3},\quad\quad \hat{\mathbf{Q}}=(\hat{Q}_{ij})_{i,j=1}^{3}, 
	\end{equation}
	where the $N$-by-$N$ matrices $\hat{P}_{ij}$, $\hat{M}_{ij}$ and $\hat{Q}_{ij}$ are given by
	\begin{equation}\label{matriceshat2}
		\hat{P}_{ij}=(\hat{P}_{ij}^{mn})_{mn=1}^{N},\quad\hat{M}_{ij}=(\hat{M}_{ij}^{mn})_{mn=1}^{N},\quad\hat{Q}_{ij}=(\hat{Q}_{ij}^{mn})_{mn=1}^{N}.
	\end{equation}
	Precisely,
	\begin{equation}\label{matriceshat3}
		\hat{P}_{ij}^{mn}=
		\begin{cases}
			\left(\mathbf{K}_{\Gamma_{m}^{+},1}^{*}[\boldsymbol{\zeta}_i],\boldsymbol{\xi}_{j}\right)_{\Gamma_{m}^{+}}-
			\left(\mathbf{K}_{\Gamma_{m}^{+},1}^{*}[\boldsymbol{\zeta}_i],\boldsymbol{\xi}_{j}\right)_{\Gamma_{m}^{-}},\ n=m,\\
			0,\quad\quad \quad\quad \quad\quad \quad\quad \quad\quad \quad\quad \quad\quad \quad\quad \ \ \ \, n\neq m ,\nonumber
		\end{cases}
	\end{equation}
	\begin{equation}\label{matriceshat4}
		\hat{M}_{ij}^{mn}=
		\begin{cases}
			\left(\mathbf{K}_{\Gamma_{m}^{+},2}^{*}[\boldsymbol{\zeta}_i],\boldsymbol{\xi}_{j}\right)_{\Gamma_{m}^{+}}-
			\left(\mathbf{K}_{\Gamma_{m}^{+},2}^{*}[\boldsymbol{\zeta}_i],\boldsymbol{\xi}_{j}\right)_{\Gamma_{m}^{-}},\ n=m,\\
			0,\quad\quad \quad\quad \quad\quad \quad\quad \quad\quad \quad\quad \quad\quad \quad\quad \ \ \ \, n\neq m ,\nonumber
		\end{cases}
	\end{equation}
	and
	\begin{equation}\label{matriceshat5}
		\hat{Q}_{ij}^{mn}=
		\begin{cases}
			\left(\hat{\mathbf{S}}_{\Gamma_{m}^{+}}^{\tau\omega}[\boldsymbol{\zeta}_i],\varsigma_{4m-3}^{j}\right)_{\Gamma_{m}^{+}},\quad \quad\quad \quad \quad\quad\quad \quad\quad\quad  n=m-1,\\
			\left(\hat{\mathbf{S}}_{\Gamma_{m}^{+}}^{\tau\omega}[\boldsymbol{\zeta}_i],\varsigma_{4m-3}^{j}\right)_{\Gamma_{m}^{+}}-
			\left(\hat{\mathbf{S}}_{\Gamma_{m}^{+}}^{\tau\omega}[\boldsymbol{\zeta}_i],\varsigma_{4m-1}^{j}\right)_{\Gamma_{m}^{-}},\ n=m,\\
			-\left(\hat{\mathbf{S}}_{\Gamma_{m}^{+}}^{\tau\omega}[\boldsymbol{\zeta}_i],\varsigma_{4m-1}^{j}\right)_{\Gamma_{m}^{-}}, \quad\quad \quad \quad\quad\quad \quad\quad\quad\, n=m+1,\\
			0,\quad\quad\,\quad \quad\quad \quad \quad\quad\quad \quad\quad\quad \quad\quad \quad \quad\quad\quad \,\text{else}.\nonumber
		\end{cases}
	\end{equation}
	Then, the occurrence of subwavelength resonance is equivalent to
	\begin{align}
		&\mathbf{c}^{T}\Big[  \rho\tau^2\omega^2\ln\omega\hat{\mathbf{P}}+\rho\tau^2\omega^2\Big(\ln{(\sqrt{\rho}\tau)}\hat{\mathbf{P}}
		+\hat{\mathbf{M}}\Big)-\delta\hat{\mathbf{Q}}\Big] \mathbf{\Lambda}\mathbf{c}+\mathcal{O}(\delta\omega^2\ln{\omega}+\omega^4\ln{\omega})=0.\nonumber
	\end{align}
	The proof is completed.
\end{proof}
\subsection{The point scatterer approximation}
The aim of this subsection is to analyse the field behaviours of system \eqref{Lame system2} when the incident frequency $\omega$ is located in different regimes. We first take a time- harmonic compressed plane wave incident field of the form
\begin{equation}\label{ui}
	\mathbf{u}^{i}(x)=\mathbf{d}e^{\mathrm{i}k_p\boldsymbol{x}\cdot\mathbf{d}},
\end{equation}
where $k_p$ is defined in \eqref{kskp} as the wavenumber of the p-wave, and $\mathbf{d}=(d_1,d_2)^{T}\in\mathbb{R}^{2}$ satisfies $\mathbf{d}\cdot\mathbf{d}=1$. In what follows, we provide the proof of Theorem \ref{pointscattererapproximation}

\begin{proof}[Proof of Theorem \ref{pointscattererapproximation}]
We only consider the single resonator case~(see subsection~3.1), while the $N$-nested resonators case can be treated similarly. First, we consider the case in which $\omega$ is far from $\omega_k$. In this regime, the operator $\mathcal{A}(\omega,\delta)$ admits a uniformly bounded inverse. Consequently, $\mathbf{u}_D=\mathcal{O}(1)$ and the coefficients $\varrho_i=\mathcal{O}(1)$.
We now restrict our attention to the regime $\delta\ln\omega\ll 1$, where the resonance frequencies $\omega_i$ are in fact situated by Remark \ref{regime_rem}. In fact, with the help of the operator $\hat{\mathcal{A}}_{0}$ defined in \eqref{hatA0}, the equation \eqref{Aomegadelta} is equivalent to
\begin{equation}\label{Operator equation}
	(\hat{\mathcal{A}}_{0}+\mathcal{B}-\mathcal{P})[\Phi](\mathbf{x})=\mathcal{F}(\mathbf{x}), \quad \mathbf{x} \in \partial D,
\end{equation}
where $\Phi\in\mathcal{H}$. Firstly, it is assumed that the function $\Phi$ has the following orthogonal decomposition
$$
\Phi=\Phi_k+\Phi_k^{\bot}\quad \text{with}\quad (\Phi_{k},\Phi_k^{\bot})_{\mathcal{H}}=0.
$$
Here, $\Phi_k\in\ker\mathcal{A}_0$ written as a linear combination of the basis functions, i.e. $\Phi_k=\sum_{i=1}^3\alpha_i\hat\Phi_i$, where the coefficients $\alpha_i(1\leq i\leq3)$ shall be determined later. Since the operator $\hat{\mathcal{A}}_0+\mathcal{B}$ is invertible, taking $(\hat{\mathcal{A}}_0+\mathcal{B})^{-1}$ on both sides of equation \eqref{Operator equation} yields that
\begin{equation}
	\Phi_k+\Phi_k^{\bot}-\left(\mathbf{I}_2+\hat{\mathcal{A}}_0^{-1}\mathcal{B}\right)^{-1}[\Phi_k]=(\hat{\mathcal{A}}_0+\mathcal{B})^{-1}[\mathcal{F}], \quad \mathbf{x} \in \partial D.	\notag
\end{equation}
Further calculation shows that for $\mathbf{x} \in \partial D$,
\begin{equation}\label{Fequality}
	\Phi_k^{\bot}+\hat{\mathcal{A}}_0^{-1}\left(\omega^2\ln\omega\mathcal{A}_{1,0}+\omega^2\mathcal{A}_{2,0}+\delta\mathcal{A}_{0,1}
	\right)[\Phi_k]+\mathcal{O}(\delta\omega^2\ln{\omega}+\omega^4\ln{\omega})=(\hat{\mathcal{A}}_0+\mathcal{B})^{-1}[\mathcal{F}].
\end{equation}
In fact, we decompose the source term $\mathcal{F}$ as $\mathcal{F}=\mathcal{F}_0+\mathcal{O}(\omega)$, where $\mathcal{F}_0=(\mathbf{d},0)^T$. Further assume that $\mathcal{F}_0$ satisfies
$$
\mathcal{F}_0=\mathcal{F}_1+\sum_{i=1}^{3}\Upsilon_{i}\hat{\Psi}_{i}\quad \text{with}\quad \Upsilon_{i}=(\mathcal{F}_0,\hat{\Psi}_{i}).
$$
Subsequently, taking the inner product of both sides of \eqref{Fequality} with $\hat\Phi_j$ gives
\begin{align}\label{order}
	&\mathbf{\Lambda}\Big(\rho\tau^2\omega^2\ln\omega \mathbf{P}+\rho\tau^2\omega^2\left(\ln{(\sqrt{\rho}\tau)}\mathbf{P}+\mathbf{M}\right)
	-\delta \mathbf{Q}\Big)\boldsymbol{\alpha}\nonumber\\
	&+\mathcal{O}(\delta\omega^2\ln{\omega}+\omega^4\ln{\omega})=\mathbf{\Upsilon}+\mathcal{O}(\omega+\delta),
\end{align}
where we use the fact that $\Big({\hat{\mathcal{A}}}_0^{-1}[\mathcal{F}_{1}],\hat\Phi_j\Big)=0$, and $\boldsymbol{\alpha}=(\alpha_1,\alpha_2,\alpha_2)^T$, $\mathbf{\Upsilon}=(\Upsilon_{1},\Upsilon_{2},\Upsilon_{3})^T$. From \eqref{Fequality} one has
\begin{equation}\label{Phi_bot1}
	\Phi_k^{\bot}=(\hat{\mathcal{A}}_0)^{-1}[\mathcal{F}_0]-\sum_{i=1}^3 \alpha_i\hat{\mathcal{A}}_0^{-1}\left(\omega^2\ln\omega\mathcal{A}_{1,0}+\omega^2\mathcal{A}_{2,0}+\delta\mathcal{A}_{0,1}
	\right)[\hat{\Phi}_i]+o(1).
\end{equation}
Denote $\hat{\mathcal{P}}$ is the projection operator to $(\mathrm{ker}\mathcal{A}_0)^{\bot}$, taking $\hat{\mathcal{P}}$ on both sides of \eqref{Phi_bot1} we get
\begin{equation}
	\Phi_k^{\bot}=(\hat{\mathcal{A}}_0)^{-1}[\mathcal{F}_1]-\sum_{i=1}^3 \alpha_i \hat{\mathcal{P}} \hat{\mathcal{A}}_0^{-1}\left(\omega^2\ln\omega\mathcal{A}_{1,0}+\omega^2\mathcal{A}_{2,0}+\delta\mathcal{A}_{0,1}
	\right)[\hat{\Phi}_i]+o(1).
\end{equation}
Since $\mathcal{F}_1 \notin \mathrm{ker}\mathcal{A}_0$, which says that $\mathcal{F}_1\in \mathrm{Range}(\mathcal{A}_0)$, thus $(\hat{\mathcal{A}}_0)^{-1}[\mathcal{F}_1]={\mathcal{A}}_0^{-1}[\mathcal{F}_1]$. By the definition of $\mathcal{A}_0$ we can immediate get
\begin{equation}
	({\mathcal{A}}_0^{-1}[\mathcal{F}_1])_2=-(\hat{\mathbf{S}}_{\partial D}^{\omega})^{-1}[(\mathcal{F}_1)_1].
\end{equation}
where $({\mathcal{A}}_0^{-1}[\mathcal{F}_1])_2$ signifying the second component of the vector ${\mathcal{A}}_0^{-1}[\mathcal{F}_1]$.
Indeed, the following formula holds
\begin{equation}
	\int_{\partial D}({\mathcal{A}}_0^{-1}[\mathcal{F}_1])_2\cdot \boldsymbol{\xi}_j \mathrm{d}x=-\int_{\partial D}(\mathcal{F}_1)_1\cdot (\hat{\mathbf{S}}_{\partial D}^{\omega,*})^{-1}[\boldsymbol{\xi}_j] \mathrm{d}x=-(\mathcal{F}_1,\hat{\Psi}_j)_{\mathcal{H}}=0, \notag
\end{equation}
the derivation relies on the fact $(\mathcal{F}_1)_2\in L^2_{\mathbf{\Psi}}(\partial D)$. Similarly, one has
\begin{equation}
	\int_{\partial D}\left(\frac{\mathbf{I}_2}{2}+\mathbf{K}_{\partial D}^{*}\right)(\hat{\mathbf{S}}_{\partial D}^{\omega})^{-1}[(\mathcal{F}_1)_1]\cdot \boldsymbol{\xi}_j \mathrm{d}x =0. \notag
\end{equation}
Thanks to the invertibility of $-\frac{\mathbf{I}_2}{2}+\mathbf{K}_{\partial D}^{*}$ in $L^2_{\mathbf{\Psi}}(\partial D)$, then
\begin{equation}
	({\mathcal{A}}_0^{-1}[\mathcal{F}_1])_1=\left(-\frac{\mathbf{I}_2}{2}+\mathbf{K}_{\partial D}^{*}\right)^{-1}\left[-\left(\frac{\mathbf{I}_2}{2}+\mathbf{K}_{\partial D}^{*}\right)(\hat{\mathbf{S}}_{\partial D}^{\omega})^{-1}[(\mathcal{F}_1)_1]+(\mathcal{F}_1)_2\right], \notag
\end{equation}
also belongs to $L^2_{\mathbf{\Psi}}(\partial D)$. At this moment, the displacement field inside the domain $D$ is given by
\begin{equation}
	\begin{aligned}
		\tilde{\mathbf{S}}_{\partial D}^{\omega}[\Phi]&=\delta{\hat{\mathbf{S}}}_{\partial D}^{\tau\omega}[\Phi]+\mathcal{O}(\delta\omega^2\ln\omega)=\sum_{i=1}^3\alpha_i\delta \hat{\mathbf{S}}^{\tau\omega}_{\partial D}[\hat\Phi_i]+\delta{\hat{\mathbf{S}}}_{\partial D}^{\tau\omega}[\Phi_k^{\bot}]+\mathcal{O}(\delta\omega^2\ln\omega)\\
		&=\delta\left(\sum_{i=1}^3\alpha_i\hat{\mathbf{S}}^{\tau\omega}_{\partial D}[\hat\Phi_i](1+\mathcal{O}(\omega^2\ln\omega+\delta))+{{\mathbf{S}}}_{\partial D}^{\tau\omega}\Big[\mathcal{A}_0^{-1}[\mathcal{F}_1]\Big]\right)+\mathcal{O}(\delta\omega^2\ln\omega).
	\end{aligned}\notag
\end{equation}
Thus, when $\omega$ satisfies \eqref{formula1}, one has that $\alpha=\mathcal{O}(1/(\delta\omega^{2}\ln \omega))$ from \eqref{order} and the displacement field inside the domain $D$ takes the form
\vspace{-4pt}
\begin{equation}\label{uD}
	\begin{aligned}
		\mathbf{u}_{D}&=\sum_{i=1}^3\delta\alpha_i \left(\mathbf{S}_{\partial D}[\boldsymbol{\zeta}_i]+\int_{\partial D}\gamma_{\tau\omega}\boldsymbol{\zeta}_i+\mathcal{O}(\omega^2\ln\omega+\delta)\right)+\mathcal{O}(\delta)\\
		&=\sum_{i=1}^3\varrho_i \boldsymbol{\xi}_i+\mathcal{O}(1)=\mathcal{O}(\omega^{-2}),
	\end{aligned}
\end{equation}
since $\gamma_{\tau\omega}=\mathcal{O}(\ln\omega)$. When $\omega^2\gg \delta$, one has that $\alpha=\mathcal{O}(1/(\omega^{2}\ln \omega))$, thus the displacement field inside the domain $D$ satisfies
\begin{equation}
	\mathbf{u}_{D}=\sum_{i=1}^3\delta\alpha_i \left(\mathbf{S}_{\partial D}[\boldsymbol{\zeta}_i]+\int_{\partial D}\gamma_{\tau\omega}\boldsymbol{\zeta}_i+\mathcal{O}(\omega^2\ln\omega+\delta)\right)+\mathcal{O}(\delta)=\mathcal{O}(\delta/\omega^2).
\end{equation}
The proof is completed by noting that $\omega\ll 1$, so that $\varrho_i=o(1)$ and the term of $\mathcal{O}(\delta)$ can be regarded as higher order term.
\end{proof}
\section{The sub-wavelength resonances for disk geometry}
In this section, we use two methods to calculate the explicit expressions of the resonance frequencies within a disk. One is to utilize the conclusion derived from the formula \eqref{formula1} in Theorem \ref{thmomega}, the other is by directly solving \eqref{Aomegadelta}. Although both analyses rely on asymptotic expansions in the frequency parameter $\omega$. To aid readers, we start by introducing the Bessel and Hankel functions, as well as some of the spectral properties of N-P operators in one disk geometry.
\subsection{The Bessel and Hankel functions}
Let $J_n(t)$ and $H_n(t),n\in\mathbb{Z}$, denote the Bessel and Hankel functions of the first kind of order $n$, respectively. For $f=J_{n}$ or $H_{n}$, these functions satisfy the following Bessel differential equation
$$
t^2f''(t)+tf'(t)+\left(t^2-n^2\right)f(t)=0,
$$
and the recursion formulas (cf. \cite{ColtonKress})
$$
f_{n+1}^{\prime}=f_{n}-(n+1) \frac{f_{n+1}}{t}, \quad f_{n+1}=n \frac{f_{n}}{t}-f_{n}^{\prime}, \quad n \geq 0,
$$
$$
f_{n-1}^{\prime}=-f_{n}+(n-1) \frac{f_{n-1}}{t}, \quad f_{n-1}=n \frac{f_{n}}{t}+f_{n}^{\prime}, \quad n \geq 1.
$$
Moreover, there hold that $J_{n}(t)=(-1)^{n} J_{-n}(t)$ and $H_{n}(t)= (-1)^{n} H_{-n}(t)$ when $n$ is negative. Therefore, for the argument $t \ll 1$, the functions $J_n$ and $H_n$, enjoy the following asymptotic expansions(cf. \cite{ColtonKress}),
\begin{equation}\label{J0}
J_0(t)=1-\frac{t^2}{4}+\frac{t^4}{64}-\frac{t^6}{2^8\cdot9}+\mathcal{O}(t^8),
\end{equation}
\begin{equation}\label{J1}
J_1(t)=\frac{t}{2}-\frac{t^3}{16}+\frac{t^5}{2^7\cdot3}+\mathcal{O}(t^7),
\end{equation}
and
\begin{equation}\label{H0}
H_0(t)=\frac{\mathrm{i}}{\pi}\left(E_c+2\ln t\right)-\frac{\mathrm{i}t^2}{4\pi}\left(-2+E_c+2\ln t\right)
+\frac{\mathrm{i}t^4}{64\pi}\left(-3+E_c+2\ln t\right)
+\mathcal{O}\left(t^6\ln t\right),
\end{equation}
\begin{equation}\label{H1}
H_1(t)=-\frac{2\mathrm{i}}{\pi t}+\frac{\mathrm{i}t}{2\pi}\left(-1+E_c+2\ln t\right)
-\frac{\mathrm{i}t^3}{16\pi}\left(-\frac{5}{2}+E_c+2\ln t\right)
+\mathcal{O}\left(t^5\ln t\right).
\end{equation}

Let $D_{R}\subset\mathbb{R}^{2}$ denote the disk centered at the origin with radius $R\in \mathbb{R}_{+}$, $\mathbf{x}=\left(x_{1},x_{2}\right)\in \mathbb{R}^{2}$ is the Euclidean coordinate, and $\theta$ is the angle between $\mathbf{x}$ and the $x_{1}$-axis, $r=|\mathbf{x}|$. In the following part, we can first derive the expressions for the single-layer potentials $\mathbf{S}_{\partial D_{R}}^{\omega}$ associated with the two densities $e^{\mathrm{i} n \theta} \boldsymbol{v}$ and $e^{\mathrm{i} n \theta} \boldsymbol{t}$, where $\boldsymbol{v}=(\cos\theta, \sin \theta)^{T}$, $\boldsymbol{t}=(-\sin\theta, \cos\theta)^{T}$ represent the outward unit normal and tangential direction along a boundary $\partial D_{R}$, respectively.~(cf. \cite{2Delasticspectrum}).
\begin{lem}\label{Spectrum1}
The single-layer potentials $\mathbf{S}_{\partial D_{R}}^{\omega}\left[e^{\mathrm{i} n \theta} \boldsymbol{v}\right]$ and $\mathbf{S}_{\partial D_{R}}^{\omega}\left[e^{\mathrm{i} n \theta} \boldsymbol{t}\right]$ have the following expressions for $\mathbf{x} \in \partial D_{R}$,
$$
\mathbf{S}_{\partial D_{R}}^{\omega}\left[e^{\mathrm{i} n \theta} \boldsymbol{v}\right](\mathbf{x}) =\alpha_{1 n} e^{\mathrm{i} n \theta} \boldsymbol{v}+\alpha_{2 n} e^{\mathrm{i} n \theta} \boldsymbol{t},
$$
$$
\mathbf{S}_{\partial D_{R}}^{\omega}\left[e^{\mathrm{i} n \theta} \boldsymbol{t}\right](\mathbf{x}) =\alpha_{3 n} e^{\mathrm{i} n \theta} \boldsymbol{v}+\alpha_{4 n} e^{\mathrm{i} n \theta} \mathbf{t},
$$
where
$$
\alpha_{1 n}=-\frac{\mathrm{i} \pi}{2 \omega^{2} \rho R}\left(n^{2} J_{n}\left(k_{s} R\right) H_{n}\left(k_{s} R\right)+k_{p}^{2} R^{2} J_{n}^{\prime}\left(k_{p} R\right) H_{n}^{\prime}\left(k_{p} R\right)\right),
$$
$$
\alpha_{2 n}=\frac{n \pi}{2 \omega^{2} \rho}\left(k_{s} J_{n}\left(k_{s} R\right) H_{n}^{\prime}\left(k_{s} R\right)+k_{p} J_{n}^{\prime}\left(k_{p} R\right) H_{n}\left(k_{p} R\right)\right),
$$
$$
\alpha_{3 n}=-\frac{n \pi}{2 \omega^{2} \rho}\left(k_{s} J_{n}^{\prime}\left(k_{s} R\right) H_{n}\left(k_{s} R\right)+k_{p} J_{n}\left(k_{p} R\right) H_{n}^{\prime}\left(k_{p} R\right)\right),
$$
$$
\alpha_{4 n}=-\frac{\mathrm{i} \pi}{2 \omega^{2} \rho R}\left(k_{s}^{2} R^{2} J_{n}^{\prime}\left(k_{s} R\right) H_{n}^{\prime}\left(k_{s} R\right)+n^{2} J_{n}\left(k_{p} R\right) H_{n}\left(k_{p} R\right)\right).
$$
\end{lem}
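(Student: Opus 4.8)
The plan is to reduce this vector computation on the circle $\partial D_R$ to two scalar Helmholtz single-layer potentials, for which Graf's addition theorem yields closed forms, and then to reassemble the pieces using the Bessel recursion and Wronskian identities. First I would rewrite the fundamental solution \eqref{fundamental solution} in its Helmholtz-decomposed form: using $\mu k_s^{2}=(\lambda+2\mu)k_p^{2}=\omega^{2}\rho$,
\[
\boldsymbol{\Gamma}^{\omega}(\mathbf{x})=\frac{1}{\mu}\Big(g_{k_s}(\mathbf{x})\,\mathbf{I}_2+\frac{1}{k_s^{2}}\nabla\nabla g_{k_s}(\mathbf{x})\Big)-\frac{1}{(\lambda+2\mu)k_p^{2}}\nabla\nabla g_{k_p}(\mathbf{x}),\qquad g_k(\mathbf{x}):=-\frac{\mathrm{i}}{4}H_0^{(1)}(k|\mathbf{x}|),
\]
so that, with $\mathcal{S}^{k}$ denoting the scalar Helmholtz single-layer potential on $\partial D_R$ applied componentwise to a vector density,
\[
\mathbf{S}_{\partial D_R}^{\omega}[\boldsymbol{\varphi}]=\frac{1}{\mu}\Big(\mathcal{S}^{k_s}[\boldsymbol{\varphi}]+\frac{1}{k_s^{2}}\nabla\big(\nabla\cdot\mathcal{S}^{k_s}[\boldsymbol{\varphi}]\big)\Big)-\frac{1}{(\lambda+2\mu)k_p^{2}}\nabla\big(\nabla\cdot\mathcal{S}^{k_p}[\boldsymbol{\varphi}]\big).
\]
Second, I would pass to the complex Cartesian frame $\mathbf{e}_{\pm}=(1,\pm\mathrm{i})^{T}$, in which the radial and tangential fields decouple into pure Fourier modes: $e^{\mathrm{i}n\theta}\boldsymbol{v}=\tfrac12\big(e^{\mathrm{i}(n-1)\theta}\mathbf{e}_{+}+e^{\mathrm{i}(n+1)\theta}\mathbf{e}_{-}\big)$, $e^{\mathrm{i}n\theta}\boldsymbol{t}=\tfrac{1}{2\mathrm{i}}\big(e^{\mathrm{i}(n-1)\theta}\mathbf{e}_{+}-e^{\mathrm{i}(n+1)\theta}\mathbf{e}_{-}\big)$, with the inverse relations $e^{\mathrm{i}(n\mp1)\theta}\mathbf{e}_{\pm}=e^{\mathrm{i}n\theta}(\boldsymbol{v}\pm\mathrm{i}\boldsymbol{t})$.

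The core step is the scalar computation, for which Graf's addition theorem gives, for $|\mathbf{x}|=r$ and $|\mathbf{y}|=R$,
\[
\mathcal{S}^{k}[e^{\mathrm{i}m\theta}](\mathbf{x})=-\frac{\mathrm{i}\pi R}{2}\,H_m(kr)\,J_m(kR)\,e^{\mathrm{i}m\theta}\quad(r>R),
\]
and the same expression with $J_m(kr)H_m(kR)$ when $r<R$. Since $\mathbf{S}_{\partial D_R}^{\omega}[\boldsymbol{\varphi}]$ is continuous across $\partial D_R$ (only its conormal derivative jumps, by \eqref{jump formula}), it is legitimate to evaluate every term above as the one-sided limit $r\to R^{+}$. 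Applying $\mathbf{S}_{\partial D_R}^{\omega}$ to $e^{\mathrm{i}n\theta}\boldsymbol{v}$ and $e^{\mathrm{i}n\theta}\boldsymbol{t}$ then reduces to differentiating $H_m(kr)J_m(kR)e^{\mathrm{i}m\theta}\mathbf{e}_{\pm}$ with $m=n\mp1$; in polar coordinates the operators involved factor as $\partial_{x_1}\pm\mathrm{i}\partial_{x_2}=e^{\pm\mathrm{i}\theta}\big(\partial_r\pm\tfrac{\mathrm{i}}{r}\partial_{\theta}\big)$, and repeated use of the recursions $f_{m\pm1}(t)=\tfrac{m}{t}f_m(t)\mp f_m'(t)$ collapses every $m\pm1$ index back to order $n$. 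Letting $r\to R^{+}$, reverting to the $\{\boldsymbol{v},\boldsymbol{t}\}$ frame via the relations above, and using the sums and differences $J_{n-1}\pm J_{n+1}$ (and the analogues for $H_n$), the $\boldsymbol{v}$- and $\boldsymbol{t}$-components organize precisely into the four coefficients $\alpha_{1n},\ldots,\alpha_{4n}$ after inserting $\mu k_s^{2}=(\lambda+2\mu)k_p^{2}=\omega^{2}\rho$: the diagonal coefficients $\alpha_{1n}$ and $\alpha_{4n}$ emerge directly, whereas $\alpha_{2n}$ and $\alpha_{3n}$ require the Wronskian identity $J_n(t)H_n'(t)-J_n'(t)H_n(t)=\tfrac{2\mathrm{i}}{\pi t}$ to be brought into the displayed form.

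The main obstacle is the Bessel-function bookkeeping together with the following subtlety: the two gradient terms $\nabla(\nabla\cdot\mathcal{S}^{k_s}[\boldsymbol{\varphi}])$ and $\nabla(\nabla\cdot\mathcal{S}^{k_p}[\boldsymbol{\varphi}])$ are each discontinuous across $\partial D_R$, and only their prescribed combination is continuous, so the two terms must be taken as limits from the same side before being added; mixing sides would corrupt the off-diagonal coefficients. One should also verify that the final formulas remain valid in the low-order cases $n=0,\pm1$ by invoking $J_{-m}(t)=(-1)^{m}J_m(t)$ and $H_{-m}(t)=(-1)^{m}H_m(t)$. The remaining manipulations are routine but lengthy computations with the recursion formulas recorded above.
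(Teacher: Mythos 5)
The paper does not actually prove Lemma \ref{Spectrum1}: it states the formulas and cites \cite{2Delasticspectrum}, so there is no in-text argument to compare against. Your proposal supplies a genuine proof, and the route you choose is sound: the Helmholtz decomposition of the Kupradze tensor you write down is algebraically correct (it follows from \eqref{fundamental solution} after substituting $\omega^2\rho=\mu k_s^2=(\lambda+2\mu)k_p^2$), Graf's addition theorem gives exactly the stated closed form for the scalar layer potential on the circle, and passing to the eigenframe $\mathbf{e}_\pm$ together with the factorisation $\partial_{x_1}\pm\mathrm{i}\partial_{x_2}=e^{\pm\mathrm{i}\theta}(\partial_r\pm\tfrac{\mathrm{i}}{r}\partial_\theta)$ is precisely what makes the angular bookkeeping tractable. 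You are also right to flag the one genuine pitfall: the two hypersingular pieces $\nabla(\nabla\cdot\mathcal{S}^{k_s})$ and $\nabla(\nabla\cdot\mathcal{S}^{k_p})$ are each discontinuous across $\partial D_R$ (their $1/|\mathbf{x}|^2$ kernels are not individually integrable on the curve), and it is only the difference, with its $\ln|\mathbf{x}|$-type kernel, that has a well-defined trace; computing off-boundary and taking limits from a single, consistent side is the correct way to handle this, and mixing sides would indeed produce spurious Wronskian terms in $\alpha_{2n},\alpha_{3n}$. The remaining work is only the Bessel bookkeeping, and I agree that the Wronskian $J_nH_n'-J_n'H_n=\tfrac{2\mathrm{i}}{\pi t}$ is what reconciles the displayed asymmetric forms of $\alpha_{2n}$ and $\alpha_{3n}$ (one can check directly that they imply $\alpha_{2n}=-\alpha_{3n}$). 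The proposal is correct; it is simply a different kind of artifact than what the paper offers, since the paper only offers a citation.
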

The single-layer potentials $\mathbf{S}_{\partial D_{R}}^{\omega}\left[e^{\mathrm{i} n \theta} \boldsymbol{v}\right]$ and $\mathbf{S}_{\partial D_{R}}^{\omega}\left[e^{\mathrm{i} n \theta} \boldsymbol{t}\right]$ have the following expressions for $\mathbf{x} \in \mathbb{R}^{2} \backslash \bar{D}_{R}$,
$$
\mathbf{S}_{\partial D_{R}}^{\omega}\left[e^{\mathrm{i} n \theta} \boldsymbol{v}\right](\mathbf{x})=\frac{-\mathrm{i} \pi}{4 \omega^{2} \rho R}\left(n k_{s} R J_{n}\left(k_{s} R\right) \boldsymbol{\Psi}_{n}^{s, o}\left(k_{s}|\mathbf{x}|\right)+k_{p}^{2} R^{2} J_{n}^{\prime}\left(k_{p} R\right) \boldsymbol{\Psi}_{n}^{p, o}\left(k_{p}|\mathbf{x}|\right)\right),
$$
$$
\mathbf{S}_{\partial D_{R}}^{\omega}\left[e^{\mathrm{i} n \theta} \mathbf{t}\right](\mathbf{x})=\frac{-\pi}{4 \omega^{2} \rho R}\left(k_{s}^{2} R^{2} J_{n}^{\prime}\left(k_{s} R\right) \boldsymbol{\Psi}_{n}^{s, o}\left(k_{s}|\mathbf{x}|\right)+n k_{p} R J_{n}\left(k_{p} R\right) \boldsymbol{\Psi}_{n}^{p, o}\left(k_{p}|\mathbf{x}|\right)\right),
$$
where
$$
\boldsymbol{\Psi}_{n}^{s, o}\left(k_{s}|\mathbf{x}|\right)=\frac{2 n H_{n}\left(k_{s}|\mathbf{x}|\right)}{k_{s}|\mathbf{x}|} e^{\mathrm{i} n \theta} \boldsymbol{v}+2 \mathrm{i} H_{n}^{\prime}\left(k_{s}|\mathbf{x}|\right) e^{\mathrm{i} n \theta} \boldsymbol{t},
$$
$$
\boldsymbol{\Psi}_{n}^{p, o}\left(k_{p}|\mathbf{x}|\right)=2 H_{n}^{\prime}\left(k_{p}|\mathbf{x}|\right) e^{\mathrm{i} n \theta} \boldsymbol{v}+\frac{2 \mathrm{i} n H_{n}\left(k_{p}|\mathbf{x}|\right)}{k_{p}|\mathbf{x}|} e^{\mathrm{i} n \theta} \boldsymbol{t}.
$$
Besides, the single-layer potentials $\mathbf{S}_{\partial D_{R}}^{\omega}\left[e^{\mathrm{i} n \theta} \boldsymbol{v}\right]$ and $\mathbf{S}_{\partial D_{R}}^{\omega}\left[e^{\mathrm{i} n \theta} \boldsymbol{t}\right]$ have the following expressions for $\mathbf{x} \in D_{R}$,
$$
\mathbf{S}_{\partial D_{R}}^{\omega}\left[e^{\mathrm{i} n \theta} \boldsymbol{v}\right](\mathbf{x})=\frac{-\mathrm{i} \pi}{4 \omega^{2} \rho R}\left(n k_{s} R H_{n}\left(k_{s} R\right) \boldsymbol{\Psi}_{n}^{s, i}\left(k_{s}|\mathbf{x}|\right)+k_{p}^{2} R^{2} H_{n}^{\prime}\left(k_{p} R\right) \boldsymbol{\Psi}_{n}^{p, i}\left(k_{p}|\mathbf{x}|\right)\right),
$$
$$
\mathbf{S}_{\partial D_{R}}^{\omega}\left[e^{\mathrm{i} n \theta} \boldsymbol{t}\right](\mathbf{x})=\frac{-\pi}{4 \omega^{2} \rho R}\left(k_{s}^{2} R^{2} H_{n}^{\prime}\left(k_{s} R\right) \boldsymbol{\Psi}_{n}^{s, i}\left(k_{s}|\boldsymbol{x}|\right)+n k_{p} R H_{n}\left(k_{p} R\right) \boldsymbol{\Psi}_{n}^{p, i}\left(k_{p}|\boldsymbol{x}|\right)\right),
$$
where
$$
\boldsymbol{\Psi}_{n}^{s, i}\left(k_{s}|\boldsymbol{x}|\right)=\frac{2 n J_{n}\left(k_{s}|\boldsymbol{x}|\right)}{k_{s}|\boldsymbol{x}|} e^{\mathrm{i} n \theta} \boldsymbol{v}+2 \mathrm{i} J_{n}^{\prime}\left(k_{s}|\boldsymbol{x}|\right) e^{\mathrm{i} n \theta} \boldsymbol{t},
$$
$$
\boldsymbol{\Psi}_{n}^{p, i}\left(k_{p}|\boldsymbol{x}|\right)=2 J_{n}^{\prime}\left(k_{p}|\boldsymbol{x}|\right) e^{\mathrm{i} n \theta} \boldsymbol{v}+\frac{2 \mathrm{i} n J_{n}\left(k_{p}|\boldsymbol{x}|\right)}{k_{p}|\boldsymbol{x}|} e^{\mathrm{i} n \theta} \boldsymbol{t}.
$$

\begin{lem}\label{Spectrum2}
The N-P operator $\mathbf{K}_{\partial D_{R}}^{\omega, *}$ have the following expressions with two densities $e^{\mathrm{i} n \theta} \boldsymbol{v}$ and $e^{\mathrm{i} n \theta} \boldsymbol{t}$,
$$
\mathbf{K}_{\partial D_{R}}^{\omega, *}\left[e^{\mathrm{i} n \theta} \boldsymbol{v}\right]=a_{1 n} e^{\mathrm{i} n \theta} \boldsymbol{v}+a_{2 n} e^{\mathrm{i} n \theta} \boldsymbol{t},
$$
$$
\mathbf{K}_{\partial D_{R}}^{\omega, *}\left[e^{\mathrm{i} n \theta} \boldsymbol{t}\right] =b_{1 n} e^{\mathrm{i} n \theta} \boldsymbol{v}+b_{2 n} e^{\mathrm{i} n \theta} \boldsymbol{t},
$$
where
$$
a_{1 n}=-\frac{1}{2}+g_{1, n}(R), \quad a_{2 n}=g_{2, n}(R), \quad b_{1 n}=g_{3, n}(R), \quad b_{2 n}=-\frac{1}{2}+g_{4, n}(R).
$$
The coefficients $g_{i, n}(R)(1 \leq i \leq 4)$ can be derived from the tractions $\left.\partial_{\nu} \mathbf{S}_{\partial D_{R}}^{\omega}\left[e^{\mathrm{i} n \theta} \boldsymbol{v}\right]\right|_{+}$ and \\
$\left.\partial_{\nu} \mathbf{S}_{\partial D_{R}}^{\omega}\left[e^{\mathrm{i} n \theta} \boldsymbol{t}\right]\right|_{+}$ evaluated on $\partial D_{R}$, as defined by the following expressions
$$
\left.\partial_{\nu} \mathbf{S}_{\partial D_{R}}^{\omega}\left[e^{\mathrm{i} n \theta} \boldsymbol{v}\right]\right|_{+}=g_{1, n}(|\mathbf{x}|) e^{\mathrm{i} n \theta} \boldsymbol{v}+g_{2, n}(|\mathbf{x}|) e^{\mathrm{i} n \theta} \boldsymbol{t},
$$
$$
\left.\partial_{\nu} \mathbf{S}_{\partial D_{R}}^{\omega}\left[e^{\mathrm{i} n \theta} \boldsymbol{t}\right]\right|_{+}=g_{3, n}(|\mathbf{x}|) e^{i n \theta} \boldsymbol{v}+g_{4, n}(|\mathbf{x}|) e^{i n \theta} \boldsymbol{t},
$$
where
\begin{equation}
\begin{aligned}
g_{1, n}(|\mathbf{x}|)= & \frac{\mathrm{i} \pi}{2 \omega^{2} \rho |\mathbf{x}|^{2}}\left(2 \mu n^{2} J_{n}\left(k_{s} R\right)\left(H_{n}\left(k_{s}|\mathbf{x}|\right)-k_{s} |\mathbf{x}| H_{n}^{\prime}\left(k_{s}|\mathbf{x}|\right)\right)+\right. \\
& \left.J_{n}^{\prime}\left(k_{p} R\right) k_{p} R\left(H_{n}\left(k_{p}|\mathbf{x}|\right)\left(\omega^{2} \rho |\mathbf{x}|^{2}-2 \mu n^{2}\right)+2 k_{p} \mu |\mathbf{x}| H_{n}^{\prime}\left(k_{p}|\mathbf{x}|\right)\right)\right),\nonumber
\end{aligned}
\end{equation}
\begin{equation}
\begin{aligned}
g_{2, n}(|\mathbf{x}|)= & -\frac{n \mu \pi}{2 \omega^{2} \rho |\mathbf{x}|^{2}}\left(J_{n}\left(k_{s} R\right) H_{n}\left(k_{s}|\mathbf{x}|\right)\left(k_{s}^{2} |\mathbf{x}|^{2}-2 n^{2}\right)+2k_{s}|\mathbf{x}| J_{n}\left(k_{s} R\right) H_{n}^{\prime}\left(k_{s}|\mathbf{x}|\right)+\right. \\
& \left.2 k_{p}R J_{n}^{\prime}\left(k_{p} R\right)\left(H_{n}\left(k_{p}|\mathbf{x}|\right)-k_{p} |\mathbf{x}| H_{n}^{\prime}\left(k_{p}|\mathbf{x}|\right)\right)\right),\nonumber
\end{aligned}
\end{equation}
\begin{equation}
\begin{aligned}
g_{3, n}(|\mathbf{x}|)= & \frac{n \pi}{2 \omega^{2} \rho |\mathbf{x}|^{2}}\left(J_{n}\left(k_{p} R\right) H_{n}\left(k_{p}|\mathbf{x}|\right)\left(\omega^{2} \rho |\mathbf{x}|^{2}-2 \mu n^{2}\right)+2 \mu k_{p}|\mathbf{x}| J_{n}\left(k_{p} R\right) H_{n}^{\prime}\left(k_{p}|\mathbf{x}|\right)+\right. \\
& \left.2 \mu k_{s}R J_{n}^{\prime}\left(k_{s} R\right)\left(H_{n}\left(k_{s}|\mathbf{x}|\right)-k_{s} |\mathbf{x}| H_{n}^{\prime}\left(k_{s}|\mathbf{x}|\right)\right)\right),\nonumber
\end{aligned}
\end{equation}
\begin{equation}
\begin{aligned}
g_{4, n}(|\mathbf{x}|)= & \frac{\mathrm{i} \mu \pi}{2 \omega^{2} \rho |\mathbf{x}|^{2}}\left(2 n^{2} J_{n}\left(k_{p} R\right)\left(H_{n}\left(k_{p}|\mathbf{x}|\right)-k_{p} |\mathbf{x}| H_{n}^{\prime}\left(k_{p}|\mathbf{x}|\right)\right)+\right. \\
&\left. k_{s} R J_{n}^{\prime}\left(k_{s} R\right) \left(k_{s}^{2} |\mathbf{x}|^{2} H_{n}\left(k_{s}|\mathbf{x}|\right)+2 k_{s} |\mathbf{x}| H_{n}^{\prime}\left(k_{s}|\mathbf{x}|\right)-2 n^{2} H_{n}\left(k_{s}|\mathbf{x}|\right)\right)\right).\nonumber
\end{aligned}
\end{equation}
\end{lem}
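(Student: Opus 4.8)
The plan is to reduce everything to the jump relation \eqref{jump formula}, which gives $\mathbf{K}_{\partial D_{R}}^{\omega,*}[\boldsymbol{\varphi}]=\partial_{\boldsymbol{\nu}}\mathbf{S}_{\partial D_{R}}^{\omega}[\boldsymbol{\varphi}]\big|_{+}-\tfrac{1}{2}\mathbf{I}_2$. Thus it suffices to compute the exterior traction $\partial_{\boldsymbol{\nu}}\mathbf{S}_{\partial D_{R}}^{\omega}[\boldsymbol{\varphi}]$ for $\boldsymbol{\varphi}=e^{\mathrm{i}n\theta}\boldsymbol{v}$ and $\boldsymbol{\varphi}=e^{\mathrm{i}n\theta}\boldsymbol{t}$ at a point $\mathbf{x}\in\mathbb{R}^{2}\setminus\bar D_{R}$, to read off the coefficient functions $g_{i,n}(|\mathbf{x}|)$ displayed in the statement, and then to let $|\mathbf{x}|\to R^{+}$. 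Since the traction of a single-layer potential is continuous from the exterior, this limit is just $g_{i,n}(R)$, and the jump relation then yields $a_{1n}=-\tfrac12+g_{1,n}(R)$, $a_{2n}=g_{2,n}(R)$, $b_{1n}=g_{3,n}(R)$, $b_{2n}=-\tfrac12+g_{4,n}(R)$, which is precisely the claim.

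For the exterior traction I would start from the exterior representation in Lemma \ref{Spectrum1}, where $\mathbf{S}_{\partial D_{R}}^{\omega}[e^{\mathrm{i}n\theta}\boldsymbol{v}]$ and $\mathbf{S}_{\partial D_{R}}^{\omega}[e^{\mathrm{i}n\theta}\boldsymbol{t}]$ are expressed as linear combinations of the cylindrical waves $\boldsymbol{\Psi}_{n}^{p,o}(k_{p}|\mathbf{x}|)$ and $\boldsymbol{\Psi}_{n}^{s,o}(k_{s}|\mathbf{x}|)$. The useful structural observation is that, up to scalar factors, $\boldsymbol{\Psi}_{n}^{p,o}$ is the gradient of the scalar Helmholtz solution $H_{n}(k_{p}|\mathbf{x}|)e^{\mathrm{i}n\theta}$ and $\boldsymbol{\Psi}_{n}^{s,o}$ is the skew gradient of $H_{n}(k_{s}|\mathbf{x}|)e^{\mathrm{i}n\theta}$, i.e. we are computing the conormal derivative on a compressional and a shear cylindrical wave separately. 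For a curl-free field $\mathbf{u}_{p}=\nabla\phi$ with $\Delta\phi=-k_{p}^{2}\phi$ one has $\nabla\cdot\mathbf{u}_{p}=-k_{p}^{2}\phi$ and $\nabla^{s}\mathbf{u}_{p}=\nabla\nabla\phi$, so by \eqref{co-normal derivative} $\partial_{\boldsymbol{\nu}}\mathbf{u}_{p}=-\lambda k_{p}^{2}\phi\,\boldsymbol{\nu}+2\mu(\nabla\nabla\phi)\boldsymbol{\nu}$; for a divergence-free field $\mathbf{u}_{s}=\nabla^{\perp}\psi$ with $\Delta\psi=-k_{s}^{2}\psi$ the bulk term vanishes and $\partial_{\boldsymbol{\nu}}\mathbf{u}_{s}=2\mu(\nabla^{s}\mathbf{u}_{s})\boldsymbol{\nu}$.

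What remains is a careful but routine computation: express the Hessian $\nabla\nabla(H_{n}(k_{p}r)e^{\mathrm{i}n\theta})$ and the symmetric gradient of $\nabla^{\perp}(H_{n}(k_{s}r)e^{\mathrm{i}n\theta})$ in polar coordinates, apply them to the outward normal $\boldsymbol{\nu}=\boldsymbol{v}$, use the Bessel equation $t^{2}f''+tf'+(t^{2}-n^{2})f=0$ to eliminate the second radial derivatives in favour of $f_{n}$ and $f_{n}'$, and decompose the result in the frame $\{e^{\mathrm{i}n\theta}\boldsymbol{v},\ e^{\mathrm{i}n\theta}\boldsymbol{t}\}$. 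Superposing these with the scalar coefficients from Lemma \ref{Spectrum1} gives exactly $\partial_{\boldsymbol{\nu}}\mathbf{S}_{\partial D_{R}}^{\omega}[e^{\mathrm{i}n\theta}\boldsymbol{v}]\big|_{\mathbf{x}}=g_{1,n}(|\mathbf{x}|)e^{\mathrm{i}n\theta}\boldsymbol{v}+g_{2,n}(|\mathbf{x}|)e^{\mathrm{i}n\theta}\boldsymbol{t}$ together with the analogous formula involving $g_{3,n},g_{4,n}$, with the coefficients as stated; letting $|\mathbf{x}|\to R^{+}$ and subtracting $\tfrac12\mathbf{I}_2$ completes the argument. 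The main obstacle is precisely this bookkeeping: propagating the four objects $J_{n},H_{n}$ and $J_{n}',H_{n}'$, each evaluated at the appropriate $k_{s}$- or $k_{p}$-argument, through the Hessian and symmetric-gradient computations without sign or index slips. Organising the calculation via the Bessel and Hankel recursion relations recorded just above keeps the intermediate expressions compact, but it remains a long rather than a conceptually hard calculation.
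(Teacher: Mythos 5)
Your argument is correct, and since the paper states Lemma~\ref{Spectrum2} without proof (citing \cite{2Delasticspectrum}), your proposal supplies the missing details in the standard way. The reduction via the jump relation \eqref{jump formula} to the exterior traction of $\mathbf{S}_{\partial D_R}^{\omega}$, the identification of $\boldsymbol{\Psi}_n^{p,o}(k_p|\mathbf{x}|)=\frac{2}{k_p}\nabla\bigl(H_n(k_p|\mathbf{x}|)e^{\mathrm{i}n\theta}\bigr)$ and $\boldsymbol{\Psi}_n^{s,o}(k_s|\mathbf{x}|)=\frac{2\mathrm{i}}{k_s}\nabla^{\perp}\bigl(H_n(k_s|\mathbf{x}|)e^{\mathrm{i}n\theta}\bigr)$, the simplification $\partial_{\boldsymbol{\nu}}\nabla\phi=-\lambda k_p^2\phi\,\boldsymbol{\nu}+2\mu(\nabla\nabla\phi)\boldsymbol{\nu}$ and $\partial_{\boldsymbol{\nu}}\nabla^{\perp}\psi=2\mu(\nabla^s\nabla^{\perp}\psi)\boldsymbol{\nu}$, the use of the Bessel equation to eliminate $H_n''$, and the exterior continuity of the traction giving $g_{i,n}(R)$ in the limit $|\mathbf{x}|\to R^+$, are exactly the ingredients that verify the stated formulas. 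What remains is indeed only the polar-coordinate bookkeeping you describe, and the constants $J_n(k_\bullet R)$, $J_n'(k_\bullet R)$ appearing in the exterior coefficients of Lemma~\ref{Spectrum1} pass through the differentiation in $\mathbf{x}$ untouched, so no additional subtleties arise.
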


\subsection{The derivation of resonance frequencies from Theorem \ref{thmomega}} In this part, we obtain the specific expression for the eigenfrequency from Theorem \ref{thmomega}. Firstly, the elements of the kernel space are given by the following
\begin{equation}
\boldsymbol{\zeta}_1=\frac{1}{{2\pi R}}\left(\begin{array}{l}
1  \\
0
\end{array}\right),\quad\quad
\boldsymbol{\zeta}_2=\frac{1}{{2\pi R}}\left(\begin{array}{l}
0  \\
1
\end{array}\right),\quad\quad
\boldsymbol{\zeta}_3=\frac{1}{{2\pi R^3}}\left(\begin{array}{l}
x_2  \\
-x_1
\end{array}\right).\nonumber
\end{equation}
These basis functions are orthonormal in $L^{2}(\partial D)^{2}$. Then, introducing parameters
\begin{equation}
\sigma_1=\frac{1}{2}\left(\frac{1}{\mu}+\frac{1}{2 \mu+\lambda}\right) \quad \text { and } \quad \sigma_2=\frac{1}{2}\left(\frac{1}{\mu}-\frac{1}{2 \mu+\lambda}\right).\nonumber
\end{equation}
Finally, we have the following lemma.
\begin{lem}\label{disk singlelayer}
Let $D$ be a disk of radius $R$. For $\mathbf{x}\in D$, there holds
\begin{equation}\label{disk singlelayer1}
{\mathbf{S}}_{\partial D}[\boldsymbol{\zeta}_i](\mathbf{x})=\int_{\partial D} \boldsymbol{\Gamma}(\mathbf{x}-\mathbf{y}) \boldsymbol{\zeta}_i(\mathbf{y}) d s(\mathbf{y})=\left(\sigma_1R\ln R-\frac{\sigma_2R}{2}\right)\boldsymbol{\zeta}_i(\mathbf{x}),\ i=1,2.
\end{equation}
and
\begin{equation}\label{disk singlelayer2}
{\mathbf{S}}_{\partial D}[\boldsymbol{\zeta}_3](\mathbf{x})=\int_{\partial D} \boldsymbol{\Gamma}(\mathbf{x}-\mathbf{y}) \boldsymbol{\zeta}_3(\mathbf{y}) d s(\mathbf{y})={-\frac{R}{2\mu}}\boldsymbol{\zeta}_3(\mathbf{x}).
\end{equation}
\end{lem}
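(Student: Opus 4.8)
The plan is to reduce everything to two elementary circle integrals and then use the explicit split of $\boldsymbol{\Gamma}$ in \eqref{gamma} into the scalar piece $\alpha_1\ln|\mathbf{x}|\,\mathbf{I}_2$ and the tensor piece $-\alpha_2\,\mathbf{x}\mathbf{x}^T/|\mathbf{x}|^2$, bookkeeping with $\sigma_i=2\pi\alpha_i$, $\boldsymbol{\xi}_i=2\pi R\,\boldsymbol{\zeta}_i$ ($i=1,2$) and $\boldsymbol{\xi}_3=2\pi R^3\,\boldsymbol{\zeta}_3$. First I would record, for $\mathbf{x}\in D_R$, the two identities
\[
\int_{\partial D_R}\ln|\mathbf{x}-\mathbf{y}|\,ds(\mathbf{y})=2\pi R\ln R,\qquad
\int_{\partial D_R}\mathbf{y}\,\ln|\mathbf{x}-\mathbf{y}|\,ds(\mathbf{y})=-\pi R\,\mathbf{x}.
\]
The first holds because the left side is harmonic and radial on $D_R$, hence equals its value at the origin; the second follows from the interior expansion $\ln|\mathbf{x}-\mathbf{y}|=\ln R-\sum_{k\ge1}k^{-1}(|\mathbf{x}|/R)^k\cos k(\theta_{\mathbf{x}}-\theta)$ on $\partial D_R$, where only the first Fourier mode survives after integrating against $\cos\theta$ or $\sin\theta$. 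These immediately handle the scalar part of $\mathbf{S}_{\partial D}[\boldsymbol{\zeta}_i]$: it contributes $\alpha_1\ln R\,\boldsymbol{\xi}_i$ for $i=1,2$ and $-\alpha_1\pi R\,\boldsymbol{\zeta}_3(\mathbf{x})$ for $i=3$.

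For the tensor part I would use the Hessian identity
\[
\frac{\mathbf{z}\mathbf{z}^T}{|\mathbf{z}|^2}=\tfrac12\,\nabla\nabla\!\big(|\mathbf{z}|^2\ln|\mathbf{z}|\big)-\big(\ln|\mathbf{z}|+\tfrac12\big)\mathbf{I}_2,
\]
which recasts the tensor contribution in terms of the scalar kernels $|\mathbf{x}-\mathbf{y}|^2\ln|\mathbf{x}-\mathbf{y}|$ and $\ln|\mathbf{x}-\mathbf{y}|$. For the constant densities $\boldsymbol{\zeta}_1,\boldsymbol{\zeta}_2$ one expands $|\mathbf{x}-\mathbf{y}|^2=|\mathbf{x}|^2+R^2-2\mathbf{x}\cdot\mathbf{y}$ on $\partial D_R$ and inserts the two circle identities to get $\int_{\partial D_R}|\mathbf{x}-\mathbf{y}|^2\ln|\mathbf{x}-\mathbf{y}|\,ds(\mathbf{y})=2\pi R\ln R\,(|\mathbf{x}|^2+R^2)+2\pi R|\mathbf{x}|^2$; applying $\nabla\nabla$ (which annihilates the constant and gives $2\mathbf{I}_2$ on $|\mathbf{x}|^2$) and combining with the $\ln$-term yields $\int_{\partial D_R}\frac{(\mathbf{x}-\mathbf{y})(\mathbf{x}-\mathbf{y})^T}{|\mathbf{x}-\mathbf{y}|^2}\,ds(\mathbf{y})=\pi R\,\mathbf{I}_2$ on $D_R$. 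Hence the tensor part contributes $-\tfrac{\alpha_2}{2}\boldsymbol{\xi}_i$; assembling with the scalar part and rewriting via $\sigma_i=2\pi\alpha_i$ and $\boldsymbol{\xi}_i=2\pi R\,\boldsymbol{\zeta}_i$ gives $\mathbf{S}_{\partial D}[\boldsymbol{\zeta}_i]=\big(\sigma_1R\ln R-\tfrac{\sigma_2R}{2}\big)\boldsymbol{\zeta}_i$ for $i=1,2$.

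For $i=3$ the key observation is that $\boldsymbol{\zeta}_3(\mathbf{y})\,ds(\mathbf{y})$ equals, up to the constant $-\tfrac{1}{2\pi R^2}$, the differential $\tfrac{d\mathbf{y}}{d\theta}\,d\theta$ of the parametrization of $\partial D_R$, i.e.\ it is purely tangential. I would use this to kill the Hessian term: pulling one $\nabla_{\mathbf{x}}$ outside the integral and writing $\nabla_{\mathbf{x}}\big(|\mathbf{x}-\mathbf{y}|^2\ln|\mathbf{x}-\mathbf{y}|\big)=-\nabla_{\mathbf{y}}\big(\cdots\big)$, the remaining $\mathbf{y}$-integrand becomes $\tfrac{d}{d\theta}$ of a smooth $2\pi$-periodic function of $\theta$, so it integrates to $0$ around the closed curve. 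Only $-\int_{\partial D_R}(\ln|\mathbf{x}-\mathbf{y}|+\tfrac12)\boldsymbol{\zeta}_3(\mathbf{y})\,ds(\mathbf{y})$ survives; since $\int_{\partial D_R}\boldsymbol{\zeta}_3\,ds=0$ and $\int_{\partial D_R}\ln|\mathbf{x}-\mathbf{y}|\,\boldsymbol{\zeta}_3(\mathbf{y})\,ds(\mathbf{y})=-\pi R\,\boldsymbol{\zeta}_3(\mathbf{x})$ (the vector identity above, componentwise), the tensor integral equals $\pi R\,\boldsymbol{\zeta}_3(\mathbf{x})$. Combining with the scalar part $-\alpha_1\pi R\,\boldsymbol{\zeta}_3$ gives $\mathbf{S}_{\partial D}[\boldsymbol{\zeta}_3]=-\pi R(\alpha_1+\alpha_2)\boldsymbol{\zeta}_3=-\tfrac{R}{2\mu}\boldsymbol{\zeta}_3$, using $\alpha_1+\alpha_2=\tfrac{1}{2\pi\mu}$ from \eqref{alpha12}.

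The hard part will be precisely the $\boldsymbol{\zeta}_3$ case: a naive route such as Fourier-expanding $\boldsymbol{\Gamma}$ on the circle, or taking the $\omega\to0$ limit of Lemma~\ref{Spectrum1}, is obstructed by the $\omega^{-2}$ prefactors and the logarithmic terms, so the clean argument hinges on recognizing the tangential structure of the density and discarding the Hessian term by integration by parts around the closed curve. Everything else is a routine evaluation of the two circle integrals together with the two algebraic identities $\sigma_i=2\pi\alpha_i$ and $\alpha_1+\alpha_2=\tfrac{1}{2\pi\mu}$.
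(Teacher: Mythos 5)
Your proof is correct, and every intermediate identity checks out: the mean-value identity $\int_{\partial D_R}\ln|\mathbf{x}-\mathbf{y}|\,ds=2\pi R\ln R$, the Fourier computation $\int_{\partial D_R}\mathbf{y}\ln|\mathbf{x}-\mathbf{y}|\,ds=-\pi R\,\mathbf{x}$, the Hessian identity $\mathbf{z}\mathbf{z}^T/|\mathbf{z}|^2=\tfrac12\nabla\nabla(|\mathbf{z}|^2\ln|\mathbf{z}|)-(\ln|\mathbf{z}|+\tfrac12)\mathbf{I}_2$, the resulting tensor integral $\pi R\,\mathbf{I}_2$ for constant densities, and the vanishing of the Hessian term against $\boldsymbol{\zeta}_3$ by recognizing $\boldsymbol{\zeta}_3\,ds$ as $-\tfrac{1}{2\pi R^2}\,d\mathbf{y}$ along the closed curve. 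The final bookkeeping $\sigma_i=2\pi\alpha_i$, $\alpha_1+\alpha_2=\tfrac{1}{2\pi\mu}$ reproduces both formulas.

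The route differs from the paper's. For $i=1,2$ the paper does not compute anything: it invokes the known formula $\mathbf{S}_{\partial D}[c]=\big(\sigma_1 R\ln R-\tfrac{\sigma_2 R}{2}\big)c$ for constant densities on a disk from a cited reference (Ando--Ji--Kang--Kim--Yu). For $i=3$ the paper simply states the two polar-coordinate integrals $\int_{\partial D}\ln|\mathbf{x}-\mathbf{y}|\,\boldsymbol{\zeta}_3\,ds=-\pi R\,\boldsymbol{\zeta}_3(\mathbf{x})$ and $\int_{\partial D}\tfrac{(\mathbf{x}-\mathbf{y})(\mathbf{x}-\mathbf{y})^T}{|\mathbf{x}-\mathbf{y}|^2}\boldsymbol{\zeta}_3\,ds=\pi R\,\boldsymbol{\zeta}_3(\mathbf{x})$ as computed facts, without the Hessian decomposition. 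Your approach is fully self-contained and trades the appeal to a cited lemma and direct polar-coordinate evaluation of the tensor kernel for a clean structural reduction: once you have the Hessian identity, both tensor integrals reduce to the two scalar logarithmic integrals, and the $\boldsymbol{\zeta}_3$ case is dispatched by the tangential/integration-by-parts observation rather than a separate brute-force computation. The paper's version is shorter to write but leaves more to the reader; yours explains why the tensor piece is computable at all and unifies the treatment of $i=1,2$ and $i=3$ under a single mechanism.
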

\begin{proof}
Indeed, the first equation \eqref{disk singlelayer1} can be proven by applying the fact that~(cf. \cite{2018Kangelasticity})
$$
{\mathbf{S}}_{\partial D}[c](\mathbf{x})=\left(\sigma_1R\ln R-\frac{\sigma_2R}{2}\right)c, \quad \mathbf{x}\in D,
$$
for any constant vector $c=(c_1,c_2)^T$. In addition, by using polar coordinate calculations, it holds that
$$
\int_{\partial D} \frac{1}{2 \pi} \ln |\mathbf{x}-\mathbf{y}|
\begin{pmatrix}
y_2\\
-y_1
\end{pmatrix}
d s(\mathbf{y})=-\frac{R}{2}
\begin{pmatrix}
x_2\\
-x_1
\end{pmatrix},\quad \mathbf{x}\in D,
$$
and
$$
\int_{\partial D} \frac{1}{2 \pi}\frac{(\mathbf{x}-\mathbf{y})(\mathbf{x}-\mathbf{y})^T}{|\mathbf{x}-\mathbf{y}|^2}
\begin{pmatrix}
y_2\\
-y_1
\end{pmatrix}
d s(\mathbf{y})=\frac{R}{2}
\begin{pmatrix}
x_2\\
-x_1
\end{pmatrix},\quad \mathbf{x}\in D.
$$
Hence, combining these with \eqref{gamma} yields the second equation \eqref{disk singlelayer2}.
\end{proof}
In fact, let
$$
\mathbf{C}=\text{diag}\left(\frac{1}{2\pi}\Big(\sigma_1\ln R-\frac{\sigma_2}{2},\sigma_1\ln R-\frac{\sigma_2}{2},-\frac{1}{2\mu R^2}\Big)\right) ,
$$
it can verify the conclusion given in the previous result (Lemma \ref{kernelspace}) that
\begin{equation}
\mathbf{S}_{\partial D}[\mathbf{G}]=\mathbf{F}\mathbf{C},\quad \quad \int_{\partial D}\mathbf{F}(\mathbf{y})^T\mathbf{G}(\mathbf{y})d s(\mathbf{y})=\mathbf{I}_3.\nonumber
\end{equation}
Here, we need to emphasize that {$\mathbf{C}$ is singular matrix, i.e.~$\det\mathbf{C}= 0$ if $\sigma_1\ln R=\frac{\sigma_2}{2}$. At this moment, $\hat{\mathbf{S}}_{\partial D}^{\omega}$ is still invertible, but ${\mathbf{S}}_{\partial D}$ is not}.
\begin{lem}\label{thmdiskomega}
Let $D$ be a disk of radius $R$. For $\delta\ll 1$, and $\epsilon\ll 1$, there exist three sub-wavelength resonance frequencies,
counted with their multiplicities, whose leading-order terms denoted can be determined by the equations as follows
\begin{equation}\label{formuladisk}
{\rho\omega^2\ln\omega p_{i}+\rho\omega^2\Big(\ln{(\sqrt{\rho}\tau)}p_{i}
+m_{i}\Big)-\epsilon q_{i}=0, \quad i=1,2,3.}
\end{equation}
where $p_{i}$, $m_{i}$ and $q_{i}(1 \leq i \leq 3)$ are given by
\begin{equation}\label{Peigenvalue}
p_{1}=p_{2}={a}_{\lambda,\mu}\pi R^2, \quad\quad  p_{3}=0,\nonumber
\end{equation}
\begin{equation}\label{Meigenvalue}
m_{1}=m_{2}={b}_{\lambda,\mu}\pi R^2-\frac{R^2}{2}\left(\sigma_{1}\ln{R}-\frac{\sigma_2}{2}\right),\quad\quad m_{3}=\frac{R^2}{8\mu},\nonumber
\end{equation}
and
\begin{equation}\label{Qeigenvalue}
q_{1}=q_{2}=\frac{\sigma_{1}\ln{R}-\frac{\sigma_2}{2}+2\pi \gamma_{\tau\omega}}{\sigma_{1}\ln{R}-\frac{\sigma_2}{2}+2\pi \gamma_{\omega}},  \quad\quad  q_{3}=1.\nonumber
\end{equation}
Here, the constants ${a}_{\lambda,\mu}$ and ${b}_{\lambda,\mu}$ are given by \eqref{lamea} and \eqref{lameb}, respectively.
\end{lem}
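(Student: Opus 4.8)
The plan is to derive \eqref{formuladisk} from the determinant equation \eqref{formula1} of Theorem \ref{thmomega} by computing, for $D=D_R$, the three $3\times3$ matrices $\mathbf{P},\mathbf{M},\mathbf{Q}$ occurring there in the explicit basis $\{\boldsymbol{\zeta}_1,\boldsymbol{\zeta}_2,\boldsymbol{\zeta}_3\}$, $\{\boldsymbol{\xi}_1,\boldsymbol{\xi}_2,\boldsymbol{\xi}_3\}$ of this section (which satisfy $(\boldsymbol{\zeta}_i,\boldsymbol{\xi}_j)=\delta_{ij}$; note that the defining formulas of Proposition \ref{matrices p and m} and \eqref{matrixQ1} depend on the choice of these bases only up to a nonzero diagonal rescaling, which does not affect the vanishing of the determinant in \eqref{formula1}). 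The claim is that all three matrices come out \emph{diagonal}, with diagonal entries precisely $p_i,m_i,q_i$; granting this, $\det(\rho\omega^2\ln\omega\,\mathbf{P}+\rho\omega^2(\ln(\sqrt{\rho}\tau)\mathbf{P}+\mathbf{M})-\epsilon\mathbf{Q})$ equals $\prod_{i=1}^{3}\bigl(\rho\omega^2\ln\omega\,p_i+\rho\omega^2(\ln(\sqrt{\rho}\tau)p_i+m_i)-\epsilon q_i\bigr)$, so the three roots of \eqref{formula1} supplied by Theorem \ref{thmomega} are exactly the roots of the three scalar equations \eqref{formuladisk}.

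First I would compute $\mathbf{P}$ and $\mathbf{M}$ from Proposition \ref{matrices p and m}. The inputs are the elementary integrals $\int_{\partial D}\boldsymbol{\zeta}_1\,ds=\boldsymbol{\xi}_1$, $\int_{\partial D}\boldsymbol{\zeta}_2\,ds=\boldsymbol{\xi}_2$, $\int_{\partial D}\boldsymbol{\zeta}_3\,ds=0$, together with $\int_D\boldsymbol{\xi}_1\,d\mathbf{x}=\pi R^2\boldsymbol{\xi}_1$, $\int_D\boldsymbol{\xi}_2\,d\mathbf{x}=\pi R^2\boldsymbol{\xi}_2$ and $\int_D\boldsymbol{\xi}_3\,d\mathbf{x}=0$ (the last two since $D$ is centred at the origin). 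These immediately give $\mathbf{P}=\mathrm{diag}(a_{\lambda,\mu}\pi R^2,\ a_{\lambda,\mu}\pi R^2,\ 0)$, and likewise the first term $b_{\lambda,\mu}\int_{\partial D}\boldsymbol{\zeta}_i\,ds\cdot\int_D\boldsymbol{\xi}_j\,d\mathbf{x}$ of $M_{ij}$ is diagonal with entries $b_{\lambda,\mu}\pi R^2,\ b_{\lambda,\mu}\pi R^2,\ 0$. The correction term $\int_D\mathbf{S}_{\partial D}[\boldsymbol{\zeta}_i]\cdot\boldsymbol{\xi}_j\,d\mathbf{x}$ is evaluated with Lemma \ref{disk singlelayer}: since $\mathbf{S}_{\partial D}[\boldsymbol{\zeta}_i]$ is pointwise a constant multiple of the vector field $\boldsymbol{\zeta}_i$ on $\bar D$, the off-diagonal pairings vanish, while the diagonal ones reduce to one-dimensional integrals and give $M_{11}=M_{22}=b_{\lambda,\mu}\pi R^2-\tfrac{R^2}{2}(\sigma_1\ln R-\tfrac{\sigma_2}{2})$ and $M_{33}=\tfrac{R^2}{8\mu}$ (the latter via $\int_D|\mathbf{x}|^2\,d\mathbf{x}=\tfrac{\pi R^4}{2}$). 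Hence $\mathbf{M}=\mathrm{diag}(m_1,m_2,m_3)$ as claimed.

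Next I would compute $\mathbf{Q}$ from \eqref{matrixQ1}. Using the adjoint relation $\hat{\mathbf{S}}_{\partial D}^{\omega,*}=(\hat{\mathbf{S}}_{\partial D}^{\omega})^{*}$, I would rewrite $Q_{ij}=\bigl((\hat{\mathbf{S}}_{\partial D}^{\omega})^{-1}\hat{\mathbf{S}}_{\partial D}^{\tau\omega}[\boldsymbol{\zeta}_i],\,\boldsymbol{\xi}_j\bigr)$. The key point is that the space $V=\mathrm{span}\{\boldsymbol{\zeta}_1,\boldsymbol{\zeta}_2,\boldsymbol{\zeta}_3\}$ is invariant under $\hat{\mathbf{S}}_{\partial D}^{\omega}$ and $\hat{\mathbf{S}}_{\partial D}^{\tau\omega}$, on which both operators act diagonally in the basis $\{\boldsymbol{\zeta}_i\}$: combining Lemma \ref{disk singlelayer} (valid on $\partial D$ by continuity of the single-layer potential) with $\int_{\partial D}\boldsymbol{\zeta}_1\,ds=\boldsymbol{\xi}_1=2\pi R\boldsymbol{\zeta}_1$, $\int_{\partial D}\boldsymbol{\zeta}_3\,ds=0$, etc., one reads off the eigenvalues $e_1^{\omega}=e_2^{\omega}=R(\sigma_1\ln R-\tfrac{\sigma_2}{2}+2\pi\gamma_\omega)$, $e_3^{\omega}=-\tfrac{R}{2\mu}$ of $\hat{\mathbf{S}}_{\partial D}^{\omega}$, and $e_1^{\tau\omega}=e_2^{\tau\omega}=R(\sigma_1\ln R-\tfrac{\sigma_2}{2}+2\pi\gamma_{\tau\omega})$, $e_3^{\tau\omega}=-\tfrac{R}{2\mu}$ of $\hat{\mathbf{S}}_{\partial D}^{\tau\omega}$ (the third being unchanged because $\boldsymbol{\zeta}_3$ has zero mean). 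Since $\gamma_\omega\to-\infty$, $e_i^{\omega}\ne0$ for $\omega\ll1$, so $(\hat{\mathbf{S}}_{\partial D}^{\omega})^{-1}\hat{\mathbf{S}}_{\partial D}^{\tau\omega}[\boldsymbol{\zeta}_i]=(e_i^{\tau\omega}/e_i^{\omega})\boldsymbol{\zeta}_i$, and pairing with $\boldsymbol{\xi}_j$ via $(\boldsymbol{\zeta}_i,\boldsymbol{\xi}_j)=\delta_{ij}$ yields $\mathbf{Q}=\mathrm{diag}(q_1,q_2,q_3)$ with $q_1=q_2=\dfrac{\sigma_1\ln R-\tfrac{\sigma_2}{2}+2\pi\gamma_{\tau\omega}}{\sigma_1\ln R-\tfrac{\sigma_2}{2}+2\pi\gamma_\omega}$ and $q_3=1$.

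The bulk of the argument is elementary integration over $D_R$ and $\partial D_R$; the only points requiring care are (a) using the formulas of Lemma \ref{disk singlelayer}, stated for $\mathbf{x}\in D$, as boundary data, which rests on continuity of $\mathbf{S}_{\partial D}^{\omega}$ across $\partial D$, and (b) the transfer of the pairing in $\mathbf{Q}$ onto $(\hat{\mathbf{S}}_{\partial D}^{\omega})^{-1}$ via the adjoint relation together with the invertibility of $\hat{\mathbf{S}}_{\partial D}^{\omega}$ on $V$. I expect the most error-prone part to be the bookkeeping in the diagonalization of $\hat{\mathbf{S}}_{\partial D}^{\omega}$ and $\hat{\mathbf{S}}_{\partial D}^{\tau\omega}$ on $V$ — keeping track of the scalars relating $\boldsymbol{\zeta}_i$ and $\boldsymbol{\xi}_i$ — but this is mechanical once Lemma \ref{disk singlelayer} is in hand.
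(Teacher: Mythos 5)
Your proof is correct and takes essentially the same approach as the paper: it computes $\mathbf{P}$, $\mathbf{M}$, $\mathbf{Q}$ explicitly from Proposition~\ref{matrices p and m}, \eqref{matrixQ1}, Lemma~\ref{disk singlelayer}, and the biorthogonality $(\boldsymbol{\zeta}_i,\boldsymbol{\xi}_j)=\delta_{ij}$, observes that all three are diagonal so that \eqref{formula1} factorizes into the three scalar equations \eqref{formuladisk}. Your rewriting of $Q_{ij}$ as $\big((\hat{\mathbf{S}}_{\partial D}^{\omega})^{-1}\hat{\mathbf{S}}_{\partial D}^{\tau\omega}[\boldsymbol{\zeta}_i],\boldsymbol{\xi}_j\big)$ is the same computation the paper performs (it moves the inverse adjoint onto $\boldsymbol{\zeta}_i$ instead), and your explicit remarks on continuity of the single-layer potential across $\partial D$ and on the basis-rescaling invariance of $\det=0$ are correct points the paper leaves implicit.
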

\begin{proof}
Consider first, that $\{\boldsymbol{\xi}_i\}_{i=1}^{3}$ and $\{\boldsymbol{\zeta}_i\}_{i=1}^{3}$ are orthonormal, it is clear that
\begin{equation}
P_{ij}=0,\quad \quad M_{ij}=0, \quad \quad Q_{ij}=0, \quad\quad\text{for} \ i\neq j. \nonumber
\end{equation}
Next, it follows from Proposition \ref{matrices p and m} and Lemma \ref{disk singlelayer} that $P_{33}=p_3$,
$$
P_{ii}=a_{\lambda,\mu}\int_{ \partial D}\boldsymbol{\zeta}_i(\mathbf{x})ds(\mathbf{x})\cdot \int_{ D}\boldsymbol{\xi}_i(\mathbf{x})d\mathbf{x}
=\frac{{a}_{\lambda,\mu}}{2\pi R}|\partial D|Vol(D)=p_i,\quad i=1,2,
$$
$$
M_{33}=-\int_{ D} {\mathbf{S}}_{\partial D}[\boldsymbol{\zeta}_3](\mathbf{x}) \cdot \boldsymbol{\xi}_3(\mathbf{x}) d\mathbf{x}
=\frac{R}{2\mu}\frac{1}{2\pi R^3}\int_{D}r^2=m_3,
$$
and
\begin{align}
M_{ii}&={b}_{\lambda,\mu}\int_{ \partial D}\boldsymbol{\zeta}_i(\mathbf{x})ds(\mathbf{x}) \cdot\int_{ D}\boldsymbol{\xi}_i(\mathbf{x})d\mathbf{x}
-\int_{ D} {\mathbf{S}}_{\partial D}[\boldsymbol{\zeta}_i](\mathbf{x}) \cdot \boldsymbol{\xi}_i(\mathbf{x}) d\mathbf{x}=m_i,\quad i=1,2.\nonumber
\end{align}
In addition, by using Lemma \ref{disk singlelayer} and equations \eqref{hats1}, \eqref{hats2}, we have
$$
\hat{\mathbf{S}}_{\partial D}^{{\tau\omega} }[\boldsymbol{\zeta}_i]=R\Big(\sigma_{1}\ln{R}-\frac{\sigma_2}{2}+2\pi \gamma_{\tau\omega}\Big)\boldsymbol{\zeta}_i,\quad i=1,2,
$$
$$
\hat{\mathbf{S}}_{\partial D}^{{\omega} }[\boldsymbol{\zeta}_i]=R\Big(\sigma_{1}\ln{R}-\frac{\sigma_2}{2}+2\pi \gamma_{\omega}\Big)\boldsymbol{\zeta}_i,\quad i=1,2,
$$
and
$$
\hat{\mathbf{S}}_{\partial D}^{{\tau\omega} }[\boldsymbol{\zeta}_3]=\hat{\mathbf{S}}_{\partial D}^{{\omega} }[\boldsymbol{\zeta}_3]={-\frac{R}{2\mu}}\boldsymbol{\zeta}_3.
$$
Hence,
\begin{align}
Q_{ii}&=\Big(\hat{\mathbf{S}}_{\partial D}^{{\tau\omega} }[\boldsymbol{\zeta}_i],(\hat{\mathbf{S}}_{\partial D}^{{\omega},* })^{-1}[\boldsymbol{\xi}_i]\Big)=R\Big(\sigma_{1}\ln{R}-\frac{\sigma_2}{2}+2\pi \gamma_{\omega}\Big)\Big((\hat{\mathbf{S}}_{\partial D}^{{\omega}})^{-1}[\boldsymbol{\zeta}_i],\boldsymbol{\xi}_i\Big)=q_i,\quad i=1,2.\nonumber
\end{align}
In the same way, we can obtain that $Q_{33}=q_3$. Finally, it reaches the proof.
\end{proof}
\subsection{The derivation of resonance frequencies by solving \eqref{Aomegadelta}}In this part, we obtain the
specific expression for the eigenfrequency by directly solving the equation \eqref{Aomegadelta}. For the sake of simplicity of presentation, denote
$$
\boldsymbol{v_{n}}=e^{\mathrm{i} n \theta} \boldsymbol{v},\quad\quad \quad \boldsymbol{t_{n}}=e^{\mathrm{i} n \theta} \boldsymbol{t}.
$$
Straightforward computations show that
$$
-r\boldsymbol{t_{0}}=-r\boldsymbol{t}=\boldsymbol{\xi}_3=
\left(\begin{array}{l}
x_{2} \\
-x_{1}
\end{array}\right),
$$
$$
\frac{\boldsymbol{v_{1}}+\boldsymbol{v_{-1}}}{2}-\frac{\boldsymbol{t_{1}}-\boldsymbol{t_{-1}}}{2\mathrm{i}}=\boldsymbol{\xi}_1=
\left(\begin{array}{l}
1 \\
0
\end{array}\right),
$$
$$
\frac{\boldsymbol{v_{1}}-\boldsymbol{v_{-1}}}{2\mathrm{i}}+\frac{\boldsymbol{t_{1}}+\boldsymbol{t_{-1}}}{2}=\boldsymbol{\xi}_2=
\left(\begin{array}{l}
0 \\
1
\end{array}\right).
$$
To better illustrate the results, we introduce the following functions
\[
\boldsymbol{\Xi}_1:=\Xi_{\boldsymbol{ t_{0}}},\quad\quad  \boldsymbol{\Xi}_2:=(\Xi_{\boldsymbol{ v_{1}}},\Xi_{\boldsymbol{ t_{1}}}), \quad \quad \boldsymbol{\Xi}_3:=(\Xi_{\boldsymbol{ v_{-1}}},\Xi_{\boldsymbol{ t_{-1}}}),\nonumber
\]
where
\begin{equation}
\Xi_{\boldsymbol{ t_{0}}}=
\begin{pmatrix}
\boldsymbol{t_{0}}& 0 \\
0&\boldsymbol{t_{0}}
\end{pmatrix},\quad\quad
\Xi_{\boldsymbol{ v_{1}}}=
\begin{pmatrix}
\boldsymbol{v_{1}}& 0 \\
0&\boldsymbol{v_{1}}
\end{pmatrix},\quad\quad
\Xi_{\boldsymbol{ t_{1}}}=
\begin{pmatrix}
\boldsymbol{t_{1}}& 0 \\
0&\boldsymbol{t_{1}}
\end{pmatrix},\nonumber
\end{equation}
and
\begin{equation}
\Xi_{\boldsymbol{ v_{-1}}}=
\begin{pmatrix}
\boldsymbol{v_{-1}}& 0 \\
0&\boldsymbol{v_{-1}}
\end{pmatrix},\quad\quad
\Xi_{\boldsymbol{ t_{-1}}}=
\begin{pmatrix}
\boldsymbol{t_{-1}}& 0 \\
0&\boldsymbol{t_{-1}}
\end{pmatrix}.\nonumber
\end{equation}
Let $A_1^{(i)}(1\leq i\leq 3)$ be the matrix expression of the operator $\mathcal{A}(\omega,\delta)$ under the following function. That is,
$$
\mathcal{A}(\omega,\delta)\boldsymbol{\Xi}_i=\boldsymbol{\Xi}_i A_1^{(i)},\quad i=1,2,3.
$$
To ensure that there exists a nontrivial kernel for the operator $\mathcal{A}(\omega,\delta)$, the determinant
of the matrix $A_1^{(i)}(1\leq i\leq3)$ should vanish.

$\bf{For \ i=1:}$ It follows from Lemma \ref{Spectrum1} and Lemma \ref{Spectrum2} that $\alpha_{30}=0$, $b_{10}=g_{3,0}(R)=0$, and
\begin{equation}\label{A1}
A_1^{(1)}=
\begin{pmatrix}
\tilde{\alpha}_{40} &-\alpha_{40}\\
-\frac{1}{2}+\tilde{b}_{20}  &-(\frac{1}{2}+b_{20})
\end{pmatrix}.
\end{equation}
Combining with asymptotic expansion \eqref{J0}, \eqref{H0}, we can obtain that
$$
\det A_1^{(1)}=\frac{R\tau^2}{2\mu}\left(\epsilon-\frac{1}{8\mu}\omega^2\rho R^2+\frac{1}{8\mu}\omega^2\epsilon\rho R^2\right)+\mathcal{O}\Big({\omega^2\epsilon\tau^4(1+\ln \tau+\ln \omega)}\Big).
$$
Here, if we only consider the first two items, i.e. $\epsilon$ and $\omega^2$ items, there holds that
$$
\frac{\omega^2\rho R^2}{8\mu}-\epsilon+\mathcal{O}\Big(\omega^2\epsilon+\omega^2\ln\omega\epsilon\Big)=0,
$$
which corresponds to the formula \eqref{formuladisk} in Lemma \ref{thmdiskomega} with $i=3$.

$\bf{For \ i=2:}$ From Lemma \ref{Spectrum1} and Lemma \ref{Spectrum2}, the $4\times 4$ matrix $A_1^{(2)}$ can also be given by
\begin{equation}\label{A2}
A_1^{(2)}=
\begin{pmatrix}
\tilde{\alpha}_{11} & -{\alpha}_{11}&  \tilde{\alpha}_{31}&  - {\alpha}_{31}\\
-\frac{1}{2}+\tilde{a}_{11}  &-(\frac{1}{2}+a_{11}) & \tilde{b}_{11}  & -{b}_{11}\\
\tilde{\alpha}_{21} & -{\alpha}_{21}&  \tilde{\alpha}_{41}&  - {\alpha}_{41}\\
\tilde{a}_{21}& -{a}_{21}&  -\frac{1}{2}+\tilde{b}_{21}&-(\frac{1}{2}+b_{21})
\end{pmatrix}.
\end{equation}
First, from equations \eqref{J1}, \eqref{H1}, there holds
$$
J_{1}(t)H_{1}(t)=-\frac{\mathrm{i}}{\pi}+\frac{\mathrm{i}t^2}{8\pi} \left(-1+2E_c+4\ln t\right)-\frac{\mathrm{i}t^4}{32\pi}\left(-\frac{10}{3}+2E_c+4\ln t\right)
+\mathcal{O}\left( t^6\ln t \right),
$$
$$
J_{1}^{\prime}(t) H_{1}^{\prime}(t)=\frac{\mathrm{i}}{\pi t^2}+\frac{\mathrm{i}}{8\pi} \left(-1+2E_c+4\ln t\right)-\frac{\mathrm{i}t^2}{32\pi}\left(-\frac{10}{3}+6E_c+12\ln t\right)
+\mathcal{O}\left(t^4\ln t \right),
$$
and
$$
J_{1}(t) H_{1}^{\prime}(t)=\frac{\mathrm{i}}{\pi t}+\frac{\mathrm{i}t}{8\pi} \left(1+2E_c+4\ln t\right)
-\frac{\mathrm{i}t^3}{32\pi}\left(-\frac{14}{3}+4E_c+8\ln t\right)
+\mathcal{O}\left(t^5\ln t \right),
$$
$$
J_{1}^{\prime}(t) H_{1}(t)=-\frac{\mathrm{i}}{\pi t}+\frac{\mathrm{i}t}{8\pi} \left(1+2E_c+4\ln t\right)
-\frac{\mathrm{i}t^3}{32\pi}\left(-\frac{14}{3}+4E_c+8\ln t\right)
+\mathcal{O}\left(t^5\ln t \right).
$$
Then, through the asymptotic expansions of each element in matrix $A_1^{(2)}$~(although tedious), we can derive that
\begin{equation}\label{detA2}
\epsilon=-\frac{R^2}{8}\left[k_s^2\left(E_c+2\ln({k_sR})\right)+k_p^2\left(E_c+2\ln({k_pR})\right)\right]+\mathcal{O}\left(\omega^4\ln \omega \right).
\end{equation}
In fact, by calculations, we can verify the following equation
\begin{align}
&\rho\omega^2\Big(\ln\omega+\ln{(\sqrt{\rho}\tau)}\Big){a}_{\lambda,\mu}\pi R^2 +\rho\omega^2\Big({b}_{\lambda,\mu}\pi R^2-\frac{R^2}{2}(\sigma_{1}\ln{R}-\frac{\sigma_2}{2})\Big) \nonumber \\
&=-\frac{R^2}{16}\Big((k_s^2+k_p^2)\left(2E_c+4\ln({\tau R})\right)+4\left(k_s^2\ln{k_s}+k_p^2\ln{k_p}\right)\Big),\nonumber
\end{align}
and
\begin{align}
\frac{\sigma_{1}\ln{R}-\frac{\sigma_2}{2}+2\pi \gamma_{\tau\omega}}{\sigma_{1}\ln{R}-\frac{\sigma_2}{2}+2\pi \gamma_{\omega}}=\frac{(k_s^2+k_p^2)\left(2E_c+4\ln({\tau R})\right)+4\left(k_s^2\ln{k_s}+k_p^2\ln{k_p}\right)}{(k_s^2+k_p^2)\left(2E_c+4\ln{ R}\right)+4\left(k_s^2\ln{k_s}+k_p^2\ln{k_p}\right)}.\nonumber
\end{align}
Hence, the equation \eqref{detA2} corresponds to the formula \eqref{formuladisk} in Lemma \ref{thmdiskomega} with $i=1,2$.

$\bf{For \ i=3:}$ Similarly, it follows from Lemma \ref{Spectrum1} and Lemma \ref{Spectrum2} that
\begin{equation}\label{A3}
A_1^{(3)}=
\begin{pmatrix}
\tilde{\alpha}_{1(-1)} & -{\alpha}_{1(-1)}&  \tilde{\alpha}_{3(-1)}&  - {\alpha}_{3(-1)}\\
-\frac{1}{2}+\tilde{a}_{1(-1)}  &-(\frac{1}{2}+a_{1(-1)}) & \tilde{b}_{1(-1)}  & -{b}_{1(-1)}\\
\tilde{\alpha}_{2(-1)} & -{\alpha}_{2(-1)}&  \tilde{\alpha}_{4(-1)}&  - {\alpha}_{4(-1)}\\
\tilde{a}_{2(-1)}& -{a}_{2(-1)}&  -\frac{1}{2}+\tilde{b}_{2(-1)}&-(\frac{1}{2}+b_{2(-1)})
\end{pmatrix}.
\end{equation}
Due to the following results
$$
{\alpha}_{1(-1)}={\alpha}_{11},\quad {\alpha}_{2(-1)}=-{\alpha}_{21},\quad {\alpha}_{3(-1)}=-{\alpha}_{31},\quad {\alpha}_{4(-1)}={\alpha}_{41},
$$
and
$$
a_{1(-1)}=a_{11},\quad a_{2(-1)}=-a_{21},\quad b_{1(-1)}=-b_{11},\quad b_{2(-1)}=b_{21}.
$$
Therefore, $\det A_1^{(2)}=\det A_1^{(3)}$, which also corresponds to the formula \eqref{formuladisk} in Lemma \ref{thmdiskomega} with $i=1,2$.

\section{Numerical illustrations}In this section, we conducted some numerical simulations to confirm the theoretical findings in the previous sections. First, we compared the two resonance frequencies with different values of $\delta$, namely the characteristic value $\omega_j^{(e)}$ of matrices $A_N^{(q)}(1\leq q\leq3)$, and the solution $\omega_j^{(c)}$ of equation \eqref{multiformula} in Theorem \ref{multifrequencies}. Here, $A_N^{(q)}(1\leq q\leq3)$ correspond to the matrix of operator $\mathcal{A}(\omega,\delta)$ defined in \eqref{block diagonal2} under the characteristic functions $\boldsymbol{\Xi}_i(1\leq i\leq3)$. In addition, we displayed how the elastic displacement field behaves when sub-wavelength resonance occurs. In this analysis, we only consider the resonance mode in multi-layer concentric disks.
\subsection{The resonance frequency}In this part, we will calculate and compare the resonance frequencies, denoted by $\omega_j^{(e)}$ and $\omega_j^{(c)}$ for some fixed $\delta\ll 1$ and $\epsilon\ll 1$. Due to the fact that the determinants of $A_N^{(2)}$ and $A_N^{(3)}$ have the same roots $\omega$, so for convenience, we only need to calculate $A_N^{(q)}$, $q=1,2$. In the following numerical experiments, the background parameters are $(\rho,\lambda,\mu):=(1,2,1)$, and the high-contrast parameters follow from \eqref{delta} associated with $\tau=1$. Let the radii of layers are equidistant, i.e. for $N$ resonators,
\begin{equation}\label{radii1}
\Gamma_j^{\pm}=\{|\mathbf{x}|=r_{j}^{\pm}\},\quad r_j^{+}=2-\frac{2(j-1)}{N},\quad r_j^{-}=2-\frac{2j-1}{N},\quad j=1,2,\dots,N,
\end{equation}
such that $r_1^{+}=2$. Furthermore, denote $f^{(q)}_N(\omega):=\text{det}(A_N^{(q)})(\omega,\delta)$, By using Muller's method, we compute the following root-finding problem
\begin{equation}
	f^{(q)}_N(\omega)=0,\quad q=1,2. \notag
\end{equation}

As shown in Table \ref{table:example 1}, Table \ref{table:example 2} and Table \ref{table:example 3}, there are 12 sub-wavelength resonance frequencies for $N =4$. The first four frequencies correspond to a single shear frequency~($q=1$), while the rest correspond to a double root primary frequency~($q=2$). By comparison, we can see that the frequencies, $\omega_j^{(e)}$ and $\omega_j^{(c)}$, only show very small differences, which confirms the effectiveness of asymptotic analysis. In fact, even if there is a singular $ \omega^2\ln \omega$ term in two dimensions, the real part of the eigenfrequencies still exhibits an approximate order of $\sqrt{\delta}$, just as in three dimensions. Moreover, the imaginary part of shear frequencies grow monotonically with the real part. In contrast, the primary frequencies display a non-monotonic trend, characterized by an initial decrease followed by an increase. This is significantly different from the three dimensional results. We attribute this behavior to the blow-up of the $\ln \omega$ term as $\omega \to 0$, which in turn requires a larger imaginary component to achieve balance. In addition, Table \ref{table:example 2} and Table \ref{table:example 3} provide the resonance frequencies for $\delta=10^{-4}$ and $\delta=10^{-6}$, respectively. The resonance frequency of $\delta=10^{-6}$ is almost one tenth of $\delta=10^{-4} $, which further confirms that $\omega$ can be scaled to $\sqrt{\delta}$. The frequencies $\omega_j^{(e)}$ and $\omega_j^{(c)}$ can become more consistent as $\delta$ decreases.

\begin{table}[H]
	\captionsetup{font=small}
	\caption{The resonance frequencies with $N=4$, the layers are chosen by \eqref{radii1} and $\delta=10^{-5}$.}
	\label{table:example 1}
	\centering
	\tiny
	\resizebox{0.7\textwidth}{!}{
		\begin{tabular}{ccc}
			\toprule
			$j$ & $\omega_j^{(e)}$&$\omega_j^{(c)}$  \\
			\midrule
			$1$&$0.005517+0.000000014\text{i}$ &  $0.005529+0.000000017\text{i}$    \\
			$2$&$0.015223+0.000000097\text{i}$ &  $0.015213+0.000000096\text{i}$    \\
			$3$&$0.022261+0.000000872\text{i}$ &  $0.022249+0.000008073\text{i}$    \\
			$4$&$0.027129+0.000001858\text{i}$ &  $0.027125+0.000001850\text{i}$   \\
			$5$&$0.001552-0.000028774\text{i}$ &  $0.001554+0.000028371\text{i}$  \\
			$6$&$0.017528-0.000003617\text{i}$ &  $0.017527-0.000003621\text{i}$ \\
			$7$&$0.029591-0.000001392\text{i}$ &  $0.029581-0.000001390\text{i}$ \\
			$8$&$0.037349-0.000003518\text{i}$ &  $0.037354-0.000003589\text{i}$ \\
			\bottomrule
		\end{tabular}
	}
\end{table}

\begin{table}[H]
	\captionsetup{font=small}
	\caption{The resonance frequencies with $N=4$, the layers are chosen by \eqref{radii1} and $\delta=10^{-4}$.}
	\label{table:example 2}
	\centering
	\tiny
	\resizebox{0.7\textwidth}{!}{
		\begin{tabular}{ccc}
			\toprule
			$j$ & $\omega_j^{(e)}$&$\omega_j^{(c)}$  \\
			\midrule
			$1$&$0.017052+0.000000275\text{i}$ &  $0.017730+0.000000261\text{i}$     \\
			$2$&$0.049018+0.000032007\text{i}$ &  $0.049319+0.000032677\text{i}$    \\
			$3$&$0.071528+0.000281906\text{i}$ &  $0.071530+0.000282035\text{i}$    \\
			$4$&$0.086270+0.000560944\text{i}$ &  $0.086279+0.000560777\text{i}$   \\
			$5$&$0.005180-0.000101872\text{i}$ &  $0.005201-0.000102344\text{i}$  \\
			$6$&$0.055971-0.000023098\text{i}$ &  $0.055981-0.000023193\text{i}$ \\
			$7$&$0.093085-0.000008201\text{i}$ &  $0.093177-0.000008257\text{i}$ \\
			$8$&$0.011349-0.000023256\text{i}$ &  $0.011056-0.000023358\text{i}$ \\
			\bottomrule
		\end{tabular}
	}
\end{table}

\begin{table}[H]
	\captionsetup{font=small}
	\caption{The resonance frequencies with $N=4$, the layers are chosen by \eqref{radii1} and $\delta=10^{-6}$.}
	\label{table:example 3}
	\centering
	\tiny
	\resizebox{0.7\textwidth}{!}{
		\begin{tabular}{ccc}
			\toprule
			$j$ & $\omega_j^{(e)}$&$\omega_j^{(c)}$  \\
			\midrule
			$1$&$0.001740+0.000000021\text{i}$ &  $0.001740+0.000000023\text{i}$    \\
			$2$&$0.004815+0.000002974\text{i}$ &  $0.004815+0.000002974\text{i}$    \\
			$3$&$0.007028+0.000030042\text{i}$ &  $0.007027+0.000030040\text{i}$    \\
			$4$&$0.008569+0.000060162\text{i}$ &  $0.008571+0.000060163\text{i}$   \\
			$5$&$0.000470-0.000012744\text{i}$ &  $0.000467-0.000012744\text{i}$  \\
			$6$&$0.005568-0.000002617\text{i}$ &  $0.005570-0.000002619\text{i}$ \\
			$7$&$0.009327-0.000000793\text{i}$ &  $0.009327-0.000000792\text{i}$ \\
			$8$&$0.011889-0.000002691\text{i}$ &  $0.011889-0.000002690\text{i}$ \\
			\bottomrule
		\end{tabular}
	}
\end{table}
\begin{figure}
	\includegraphics[width=1\textwidth]{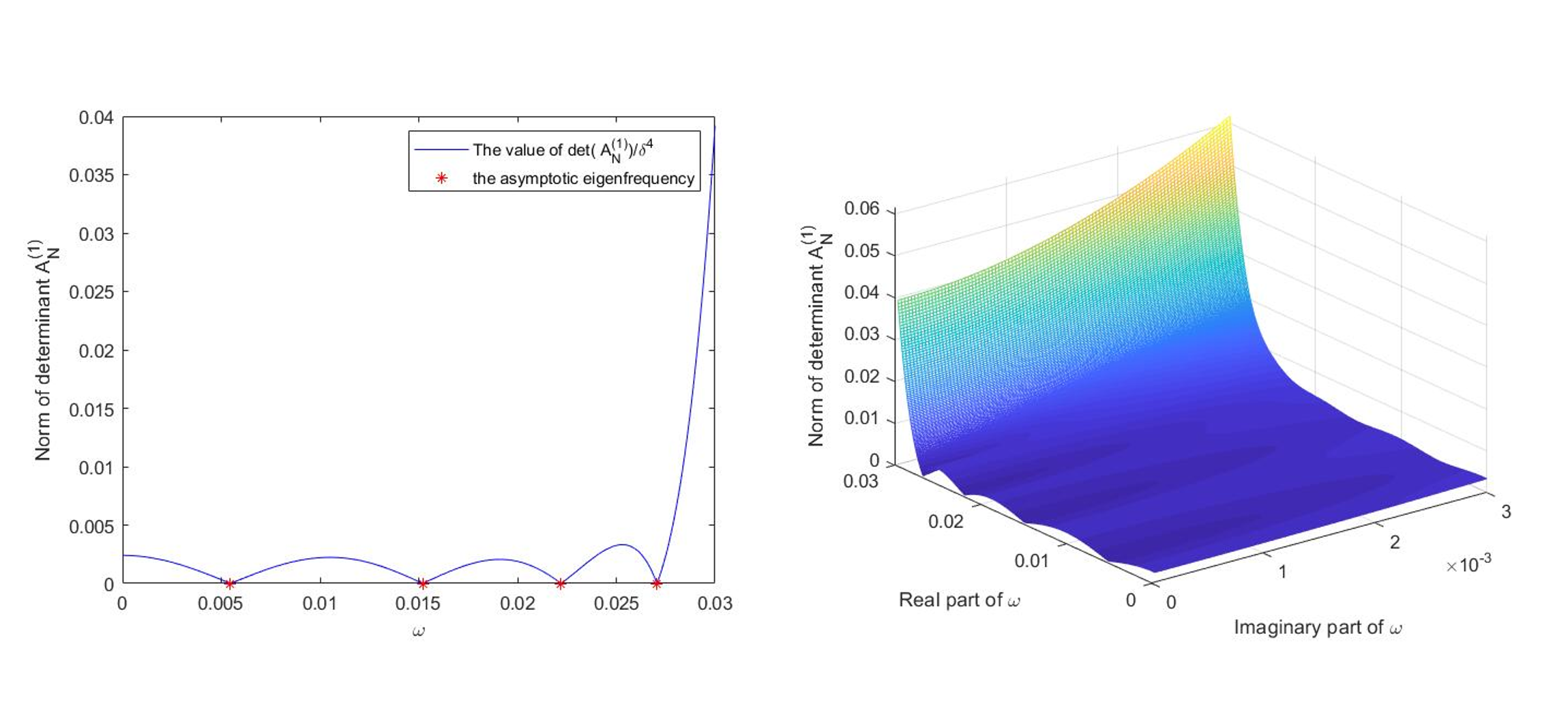}
	\caption{\label{fig:frequencies_1}The normalized determinants of $q=1$ in the setup of \eqref{radii1} with $N=4$.}
\end{figure}
\begin{figure}
	\includegraphics[width=1\textwidth]{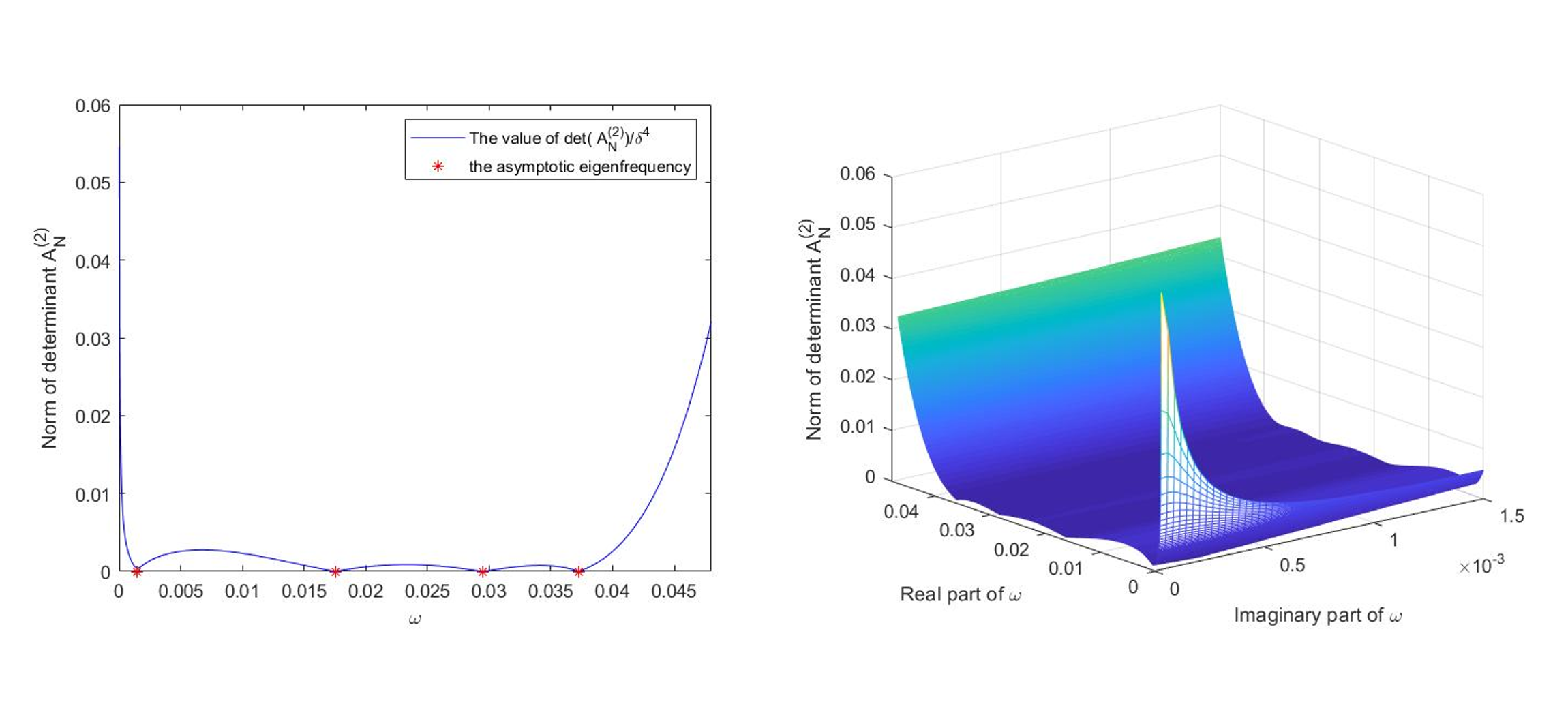}
	\caption{\label{fig:frequencies_2}The normalized determinants of $q=2$ in the setup of \eqref{radii1} with $N=4$.}
\end{figure}
To further illustrate that the obtained solution satisfies $f_N^{(q)}(\omega)=0$, let
\begin{equation}\label{normalized}
	\tilde{f}^{(q)}_N(\omega):=\frac{|f_N^{(q)}|(\omega)}{\delta^{\lfloor N+1\rfloor/2}},\quad q=1,2,
\end{equation}
with $\delta=10^{-5}$. In Fig \ref{fig:frequencies_1} and Fig \ref{fig:frequencies_2}, we provide graphs of the normalized determinant $\tilde{f}^{(1)}_N(\omega)$ and $\tilde{f}^{(2)}_N(\omega)$, respectively. The left panel describes the normalized determinant of $\operatorname{Re}(\omega)$ with $\operatorname{Im}(\omega)=0$, while the right panel displays both $\operatorname{Re}(\omega)$ and $\operatorname{Im}(\omega)$. In fact, Fig \ref{fig:frequencies_1} shows that a satisfactory approximation can be achieved even if the imaginary part is neglected. If the imaginary direction is not explored accurately enough, the fourth local minimum fails to reach the prescribed root-finding tolerance, which is consistent with the relatively large imaginary part in Table \ref{table:example 1}. From Fig \ref{fig:frequencies_2}, there holds that the first local minimum fails to meet the prescribed root-finding tolerance. Comparing the two right panels, one can see that $\tilde{f}_N^{(2)}\gg \tilde{f}_N^{(1)}$ as $\omega\to 0$. The reason is that the growth of the singular term $\ln\omega $ leads to a relatively large $\tilde{f}_N^{(2)}$, while the shear frequency lacks this term.
\begin{figure}
	\includegraphics[width=1\textwidth]{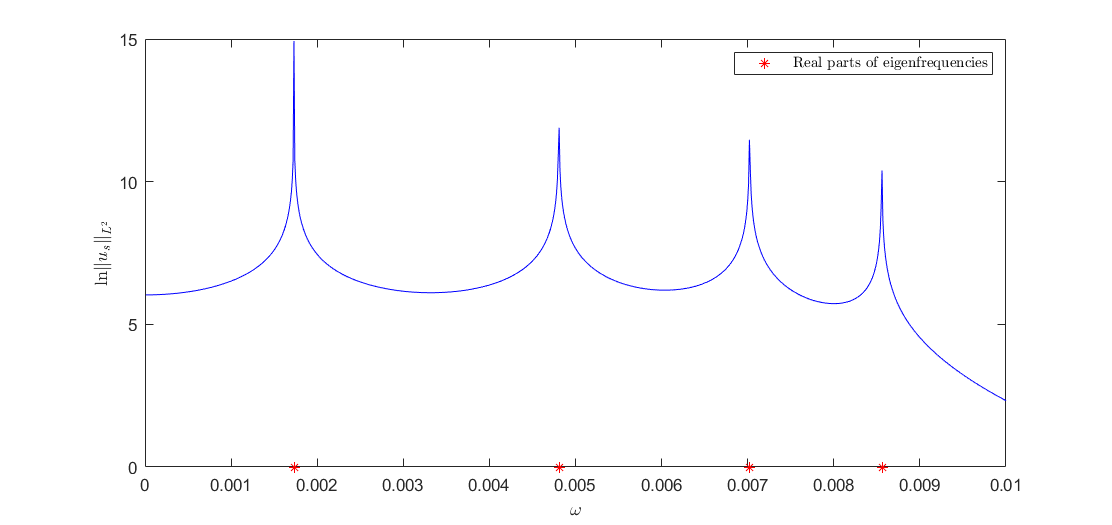}
	\caption{\label{fig:s_modes}Norm of the displacement fields $\mathbf{u}_{S}$ in the setup of \eqref{radii1} with $N=4$.}
\end{figure}

\begin{figure}
	\includegraphics[width=1\textwidth]{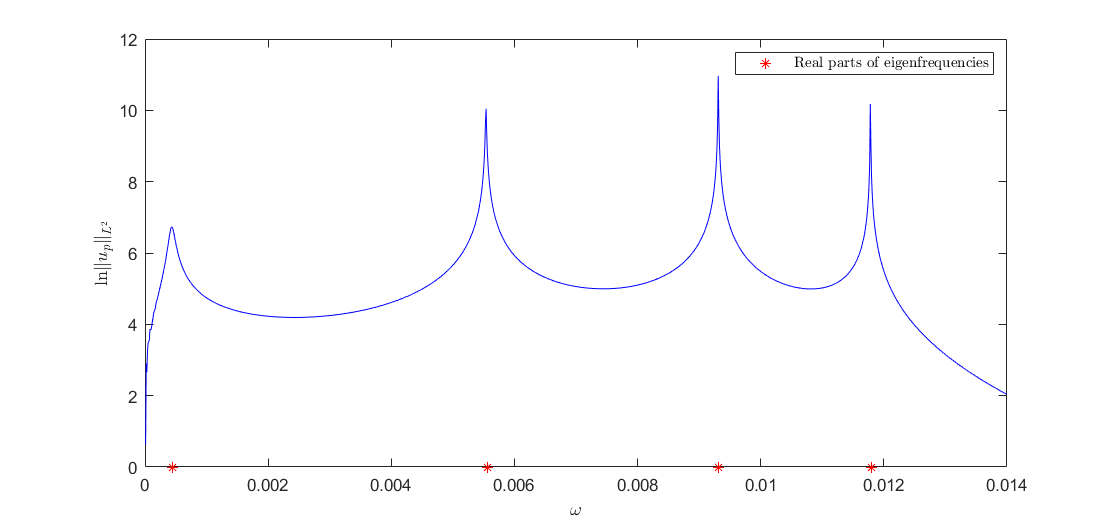}
	\caption{\label{fig:p_modes}Norm of the displacement fields $\mathbf{u}_{P}$ in the setup of \eqref{radii1} with $N=4$.}
\end{figure}
\begin{figure}
	\includegraphics[width=1\textwidth]{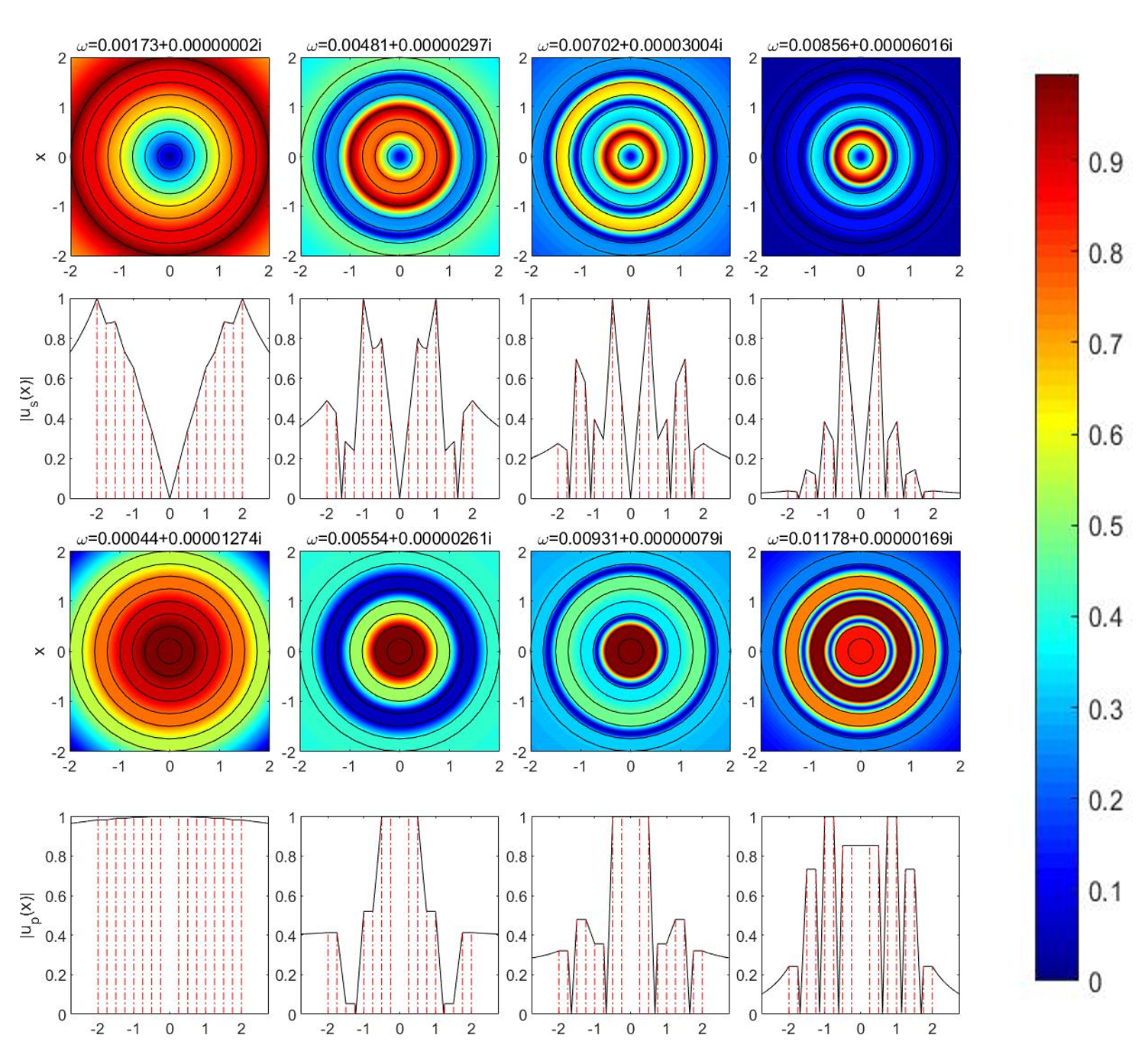}
	\caption{\label{fig:resonant_modes}{The elastic eigenmodes in the setup of \eqref{radii1} with $N=4$. The upper shows the displacement field corresponding to the shear waves, while the below shows primary waves}.}
\end{figure}
\subsection{The resonance model}
In this subsection, we present the displacement fields for equation \eqref{Lame system2} under the excitation of plane wave
\begin{equation}\label{incident wave}
	\mathbf{u}^i=\mathbf{u}^i_{s}+\mathbf{u}^i_{p}=\mathbf{q}e^{\mathrm{i}k_s\mathbf{x}\cdot \mathbf{d}}+\mathbf{d}e^{\mathrm{i}k_p\mathbf{x}\cdot \mathbf{d}},
\end{equation}
where $\mathbf{d}\in \mathbb{S}^2$ is the direction of incidence, and $\mathbf{q}\in \mathbb{S}^2$ is any vector orthogonal to $\mathbf{d}$, i.e. $\mathbf{q}\cdot \mathbf{d}=0$.
By using the vector version of the Jacobi-Anger expansion\cite{ColtonKress}, namely,
\begin{equation}
	e^{\mathrm{i}k|\mathbf{x}|\cos\theta}=\sum_{m\in \mathbb{Z}}\mathrm{i}^mJ_m(k|\mathbf{x}|)e^{\mathrm{i}m\theta}.
\end{equation}
In order to facilitate calculations using the discrete matrix of $\mathcal{A}(\omega,\delta)$, we decompose $\mathbf{u}^i$ into its modal components. Here, $\mathbf{d}=(1,0)^{T}$, $\delta=10^{-6}$, and the parameters are the same as those chosen in Subsection 5.1. The results, in Fig \ref{fig:resonant_modes}, show that the displacement distribution maintains the symmetry of the nested resonator while exhibiting increasingly complex oscillation structures. Besides, for primary frequencies, the displacement distribution within each resonator remains approximately constant, supporting the point scatterer approximation derived in Theorem \ref{pointscattererapproximation}.

Finally, we decompose the total displacement field into its shear and principal components. In Figure \ref{fig:s_modes} and Figure \ref{fig:p_modes}, we respectively demonstrate how the $L^2$ norm of the shear wave $\mathbf{u}_{S}$ and the primary wave $\mathbf{u}_{P}$ defined in equation \eqref{Lame system2} varies with $\omega$ for $N=4$. The results indicate that the peak in the displacement field norm occurs when $\omega$ approaches the real part of the eigenfrequency, which is consistent with the theoretical results of Theorem \ref{multifrequencies}.

\section*{Acknowledgments}
The work of H. Liu is supported by NSFC/RGC Joint Research Scheme, N CityU101/21, ANR/RGC Joint Research Scheme, A-CityU203/19, and the Hong Kong RGC General Research Funds (projects 11311122, 11304224 and 11300821). The work of Y. Jiang is supported by the China Natural National Science
Foundation (No. 123B2017).

\end{document}